\newcommand\HUGE{\@setfontsize\Huge{30}{36}}
\def\bbordermatrix#1{\begingroup \m@th
  \@tempdima 4.75\p@
  \setbox\z@\vbox{%
    \def\cr{\crcr\noalign{\kern2\p@\global\let\cr\endline}}%
    \ialign{$##$\hfil\kern2\p@\kern\@tempdima&\thinspace\hfil$##$\hfil
      &&\quad\hfil$##$\hfil\crcr
      \omit\strut\hfil\crcr\noalign{\kern-\baselineskip}%
      #1\crcr\omit\strut\cr}}%
  \setbox\tw@\vbox{\unvcopy\z@\global\setbox\@ne\lastbox}%
  \setbox\tw@\hbox{\unhbox\@ne\unskip\global\setbox\@ne\lastbox}%
  \setbox\tw@\hbox{$\kern\wd\@ne\kern-\@tempdima\left[\kern-\wd\@ne
    \global\setbox\@ne\vbox{\box\@ne\kern2\p@}%
    \vcenter{\kern-\ht\@ne\unvbox\z@\kern-\baselineskip}\,\right]$}%
  \null\;\vbox{\kern\ht\@ne\box\tw@}\endgroup}
\begin{document}
\newtheorem{theo}{Theorem}[section]
\newtheorem{prop}[theo]{Proposition}
\newtheorem{lemma}[theo]{Lemma}
\newtheorem{exam}[theo]{Example}
\newtheorem{coro}[theo]{Corollary}
\theoremstyle{definition}
\newtheorem{defi}[theo]{Definition}
\newtheorem{rem}[theo]{Remark}


\newcommand{\Bb}{{\bf B}}
\newcommand{\Nb}{{\bf N}}
\newcommand{\Qb}{{\bf Q}}
\newcommand{\Rb}{{\bf R}}
\newcommand{\Zb}{{\bf Z}}
\newcommand{\Ac}{{\mathcal A}}
\newcommand{\Bc}{{\mathcal B}}
\newcommand{\Cc}{{\mathcal C}}
\newcommand{\Dc}{{\mathcal D}}
\newcommand{\Fc}{{\mathcal F}}
\newcommand{\Ic}{{\mathcal I}}
\newcommand{\Jc}{{\mathcal J}}
\newcommand{\Lc}{{\mathcal L}}
\newcommand{\Oc}{{\mathcal O}}
\newcommand{\Pc}{{\mathcal P}}
\newcommand{\Sc}{{\mathcal S}}
\newcommand{\Tc}{{\mathcal T}}
\newcommand{\Uc}{{\mathcal U}}
\newcommand{\Vc}{{\mathcal V}}

\newcommand{\ax}{{\rm ax}}
\newcommand{\Acc}{{\rm Acc}}
\newcommand{\Act}{{\rm Act}}
\newcommand{\ded}{{\rm ded}}
\newcommand{\Gm}{{$\Gamma_0$}}
\newcommand{\ID}{{${\rm ID}_1^i(\Oc)$}}
\newcommand{\PA}{{\rm PA}}
\newcommand{\ACA}{{${\rm ACA}^i$}}
\newcommand{\RefP}{{${\rm Ref}^*({\rm PA}(P))$}}
\newcommand{\RefS}{{${\rm Ref}^*({\rm S}(P))$}}
\newcommand{\Rfn}{{\rm Rfn}}
\newcommand{\tar}{{\rm Tarski}}
\newcommand{\UNFA}{{${\mathcal U}({\rm NFA})$}}

\author{Nik Weaver}

\title [Hereditarily antisymmetric operator algebras]
       {Hereditarily antisymmetric operator algebras}

\address {Department of Mathematics\\
          Washington University in Saint Louis\\
          Saint Louis, MO 63130}

\email {nweaver@math.wustl.edu}

\date{\em April 20, 2019}

\begin{abstract}
We introduce a notion of ``hereditarily antisymmetric'' operator algebras
and prove a structure theorem for them in finite dimensions. We also
characterize those operator algebras in finite dimensions which can be made
upper triangular and prove matrix analogs of the theorems of Dilworth and
Mirsky for finite posets. Some partial results are obtained in the infinite
dimensional case.
\end{abstract}

\maketitle


In this paper an {\it operator algebra} is a linear subspace of some
$\mathcal{B}(\mathcal{H})$ which is stable under products, where $\mathcal{H}$
is a complex Hilbert space. It is {\it unital} if it contains the identity
operator $I$. Self-adjointness is not assumed. Indeed, an operator algebra
$\mathcal{A}$ is said to be {\it antisymmetric} if $\mathcal{A} \cap
\mathcal{A}^*$ is $\{0\}$ or $\mathbb{C}\cdot I$. This means that a nonunital
antisymmetric operator algebra contains no nonzero self-adjoint operators, and
the only self-adjoint operators a unital one contains are the real multiples
of the identity operator (Proposition \ref{sainf}). Antisymmetric operator
algebras were introduced in \cite{Sz} and have attracted occasional attention,
e.g., \cite{KP1, KP2}.

Following \cite{W1}, we regard linear subspaces $\mathcal{V}$ of
$M_n = M_n(\mathbb{C}) \cong \mathcal{B}(\mathbb{C}^n)$ as matrix or
``quantum'' analogs of relations on finite sets. In this picture the quantum
analog of a preorder relation, i.e., a relation which is reflexive and
transitive, is a
unital operator algebra. This intuition behind this idea, and its relation to
the physics of finite state quantum systems, is discussed below in Section 1.

Partially ordered sets are preordered sets which satisfy the extra
condition of antisymmetry ($a \leq b$ and $b \leq a$ implies $a = b$).
On the face of it, a natural matrix version of this condition might be
for the operator algebra to be antisymmetric in the sense defined above.
This suggests that antisymmetric operator algebras should be rather special
compared to general operator algebras, in something like the way that posets
are special compared to general preorders. But that does not
seem to be the case. Of course, just by counting dimensions, it is easy to
see that an operator algebra in $M_n$ cannot be antisymmetric if its
dimension is at least $\frac{n^2}{2} + 1$ (Proposition \ref{dimcount}). But
operator algebras of lower dimension than this typically should not
nontrivially intersect the real linear space of self-adjoint matrices. In
other words, the operator algebras of dimension less than $\frac{n^2}{2} + 1$
which are {\it not} antisymmetric are exceptional, not the other way around.

In contrast, I will show in this paper that operator algebras which enjoy a
sort of ``hereditary'' antisymmetry condition are unusually well-behaved. In
particular, the main result, Theorem \ref{mainthm}, establishes that in
finite dimensions they must have a very special structure. A key result
on the way, Theorem \ref{utthm}, characterizes those operator algebras in
$M_n$ which can be put in upper triangular form. I propose that hereditarily
antisymmetric operator algebras are the right notion of ``quantum poset''.

One aspect of the special structure of these algebras is that they can always
be decomposed into a diagonal subalgebra (relative to a suitable basis) and a
nilpotent ideal (Corollary \ref{bigdiag} plus Theorem \ref{mainthm}). The
intuition for this decomposition could be that
the diagonal subalgebra represents
the ``equality'' part of the quantum partial order and the nilpotent ideal
represents the ``strict inequality'' part. Moreover, every nilpotent operator
algebra is hereditarily antisymmetric (Theorem \ref{tfaeex1} (v) $\Rightarrow$
(i) plus Corollary \ref{tfaecor}), so we are led to the view that nilpotent
operator algebras are ``quantum'' strict orders. In support of this idea,
I prove matrix analogs of two basic theorems about finite posets,
Dilworth's theorem and Mirsky's theorem, for nilpotent operator algebras in
$M_n$ (Theorems \ref{qmir} and \ref{qdil}). The quantum Mirsky theorem has
essentially the same proof as its classical analog, while the quantum
Dilworth theorem has a very nonclassical proof.

Trivially, any subalgebra of an antisymmetric operator algebra must itself
be antisymmetric. So requiring $\mathcal{A}$ and all of its subalgebras to be
antisymmetric is no different from merely requiring $\mathcal{A}$ to be
antisymmetric. Whereas imposing this condition on quotients of $\mathcal{A}$
does not even make sense, as the concept of antisymmetry depends on the
representation in $M_n$ and one loses this when passing to quotients.
These are not the kinds of hereditary conditions we want. Rather,
there is another natural kind of ``subobject'' and ``quotient'' besides
ordinary subalgebras and quotient algebras, and requiring them
to be antisymmetric becomes a nontrivial condition (Definition \ref{hadef}).

In infinite dimensions the prospect of a general structure theory is
limited by the possibility that there could
be bounded operators with no nontrivial invariant subspaces. However, assuming
a positive solution to the transitive algebra problem, we can at least show
that any hereditarily antisymmetric dual operator algebra can be put in
upper triangular form, in the infinite dimensional sense of being contained
in the nest algebra for some maximal nest (Theorem \ref{istructure}). The
same technique applied to a single operator shows that if the invariant
subspace problem for Hilbert space operators has a positive solution, then
every bounded operator can be put in upper triangular form, in the same
sense (Theorem \ref{ispthm}).

A word about notation. The operator algebra $M_n$ comes equipped with a
natural involution, namely the Hermitian transpose operation. However, we will
sometimes want to work with matrices relative to some nonorthogonal basis of
$\mathbb{C}^n$, in which case the Hilbert space adjoint of an operator is not
expressed by the Hermitian transpose of its matrix. In these cases I will
write $\widetilde{M}_n$ for the unital algebra of $n\times n$ complex matrices
without any distinguished involution. Thus, results about $\widetilde{M}_n$
will hold for the matrix representations of linear operators on $\mathbb{C}^n$
relative to any, possibly nonorthogonal, basis of $\mathbb{C}^n$.

In a slight abuse of notation, if $A \in B(\mathcal{H})$ and $P$ is the
orthogonal projection onto a closed subspace $\mathcal{E} \subseteq
\mathcal{H}$, I will
often identify $PAP$ with an operator in $B(\mathcal{E})$.

Unless qualified as ``orthogonal'', the word ``projection'' will always mean
``possibly nonorthogonal projection'', i.e., a bounded operator $P$ satisfying
$P = P^2$ but not necessarily $P = P^*$.

Throughout this paper the scalar field is complex. The standard basis of
$\mathbb{C}^n$ will be denoted $(e_i)$.

\section{``Quantum'' preorders}

The idea that unital operator algebras are ``quantum'' preorders arises
from the more general idea that operator spaces in finite dimensions ---
that is, linear subspaces $\mathcal{V}$ of $M_n$ --- are ``quantum'' analogs
of relations on a set with $n$ elements. The usual conditions of reflexivity,
symmetry, and transitivity of a relation correspond to $\mathcal{V}$ being
unital, self-adjoint, and an algebra.
\begin{table}[ht]
\centering
\begin{tabular}{c|c}
$R \subseteq X\times X$&$\mathcal{V} \subseteq M_n$\\
\hline
reflexive&$I \in \mathcal{V}$\\
symmetric&$\mathcal{V} = \mathcal{V}^*$\\
transitive&$\mathcal{V}^2 \subseteq \mathcal{V}$\\[1ex]
\end{tabular}
\caption{Analogy between relations on a finite set $X$ and linear subspaces
of $M_n = M_n(\mathbb{C})$}
\end{table}
This is seen in the fact that the subspace ${\rm span}\{E_{ij}: (i,j) \in R\}$
of $M_n$ induced by a relation $R$ on the set $\{1, \ldots, n\}$ satisfies one
of these algebraic conditions if and only if $R$ satisfies the corresponding
relational condition. ($E_{ij}$ is the matrix with a $1$ in the $(i,j)$ entry
and $0$'s elsewhere.)
Thus unital self-adjoint subalgebras of $M_n$ are regarded as ``quantum
equivalence relations'' (reflexive, symmetric, transitive), operator
systems as ``quantum graphs'' (reflexive, symmetric), and general unital
subalgebras as  ``quantum preordered sets'' (reflexive, transitive). The
idea of operator systems as quantum graphs has been particularly fruitful;
see, e.g., \cite{DSW, W2, W3, W4}.

The adjective ``quantum'' is justified in the latter case by the fact that
the confusability graph in classical error correction becomes a confusability
operator system in quantum error correcion. That is, where classically we
use a graph to catalog which pairs of states might be indistinguishable
after passage through a noisy channel, we would in the setting of quantum
mechanics use an operator system to carry this information. This is explained
in detail in \cite{DSW} and \cite{W4}.

The idea that unital operator algebras are ``quantum'' preorders has a
similar
physical justification. To see this, first consider the classical setting of
a finite state system with phase space $S = \{s_1, \ldots, s_n\}$. Suppose we
have a family of classical channels represented by (left) stochastic matrices
$P^\lambda = (p^\lambda_{ij})$ which can be applied to the system. Here each
$P^\lambda$ represents a probabilistic transformation of $S$, with
$p^\lambda_{ij}$ being the probability of the state $s_j$ transitioning to
the state $s_i$.

The relation ``$s_j$ has a nonzero probability of transitioning to $s_i$
under some $P^\lambda$'' merely describes a directed graph on the vertex set
$S$. But the relation ``there is a sequence of channels
$P^{\lambda_1}, \ldots, P^{\lambda_m}$ under which $s_j$ has
a nonzero probability of transitioning to $s_i$'' is transitive: if there
is some sequence of channels which takes $s_k$ to $s_j$ with nonzero
probability, and another sequence of channels which takes $s_j$ to $s_i$ with
nonzero probability, then the concatenation of the two sequences takes
$s_k$ to $s_i$ with nonzero probability. If we include the identity channel
as the $m = 0$ case then this relation is also reflexive, i.e., it is a
preorder.

This framework could describe an experimental scenario where we have
some family of classical channels which we are able to apply to a finite
state system, and the preorder $s_i \preceq s_j$ reflects our ability to
convert the state $s_j$ into the state $s_i$, with nonzero probability,
by sequentially applying some of the channels which are available to us.
It is not a partial order because it might be possible to transition from
$s_j$ to $s_i$ and then back to $s_j$. However, there may also be
``invariant'' subsets of $S$ with the property that once the state of the
system lies in such a subset it cannot escape. These would correspond to
{\it lower} subsets for the preorder $\preceq$, i.e., subsets $S_0 \subseteq S$
with the property that $j \in S_0$ and $i \preceq j$ implies $i \in S_0$.

Now consider the analogous quantum setup. The pure states of a finite quantum
system are represented as unit vectors in $\mathbb{C}^n$, and the mixed
states by positive unit trace matrices in $M_n$. A quantum channel is a
completely positive trace preserving (CPTP) map $\Phi: M_n \to M_n$,
taking mixed states to mixed states. We can always express $\Phi$ in the
form $\Phi(A) = \sum K_iAK_i^*$ where the {\it Kraus matrices} $K_i$
satisfy $\sum K_i^*K_i = I$.

Given some available set of quantum channels $\Phi^\lambda$, we can ask the
same question as in the classical case: for which unit vectors $v$ and $w$
is there a nonzero probability of transitioning from $v$ to $w$ after the
application of some sequence of $\Phi^\lambda$'s? Where in the classical
setting this information was represented by a preorder on the set
of states, in the quantum setting it will be represented by the unital
algebra $\mathcal{A}$ generated by the Kraus matrices of the available channels
$\Phi^\lambda$. (The Kraus matrices are not uniquely determined by the
$\Phi^\lambda$, but their
linear span is, and hence so is the unital algebra they generate.) Namely,
there is a nonzero probability of transitioning from $v$ to $w$ after
the application of some sequence of $\Phi^\lambda$'s if and only if
we have $\langle Av,w\rangle \neq 0$ for some $A \in \mathcal{A}$. More
generally, if $v$ and $w$ are unit vectors in $\mathbb{C}^n\otimes
\mathbb{C}^k$ for some $k$, representing pure states of some composite
system, then it is possible to transition from $v$ to $w$ if and only if
$\langle (A\otimes I_k)v,w\rangle \neq 0$ for some $A \in \mathcal{A}$. In
fact, this property characterizes $\mathcal{A}$: it is the unique linear
subspace of $M_n$ for which this is true (\cite{W4}, Proposition 6.2).

In the classical setting we also had invariant subsets from which one
could not escape; the analogous quantum notion would be a linear subspace
of $\mathbb{C}^n$ which is invariant for every operator in $\mathcal{A}$.

To summarize: in finite state quantum systems unital operator algebras play
a role analogous to that played by preorders in finite state classical
systems. Thus unital operator algebras are ``quantum'' preorders, in the
same way that operator systems are ``quantum'' graphs \cite{W4}.

\section{Hereditary antisymmetry}

Moving back to the general idea that one can think of linear subspaces of
$M_n$ as being somehow analogous to relations on a set with $n$ elements:
in this picture $\mathbb{C}^n$ plays the role of an $n$ element set on
which a relation is defined, and the linear subspace $\mathcal{V} \subseteq
M_n$ specifies that relation by the condition that two unit vectors $v$ and
$w$ in $\mathbb{C}^n$ are related if $\langle Av,w\rangle \neq 0$ for some
$A \in \mathcal{V}$. From this point of view, the natural notion of a
``subobject'' of $\mathcal{V}$ is its compression to some subspace of
$\mathbb{C}^n$ (cf.\ Section 4 of \cite{W4}). Now if $\mathcal{V}$ is an
algebra, then its compressions $P\mathcal{V}P$, for $P \in M_n$ an orthogonal
projection onto a subspace of $\mathbb{C}^n$, generally are not algebras.
However, in some cases they are.

An {\it invariant subspace} for an operator algebra $\mathcal{A} \subseteq
\mathcal{B}(\mathcal{H})$ is a closed subspace $\mathcal{E}$ of $\mathcal{H}$
with the property that $A(\mathcal{E}) \subseteq \mathcal{E}$ for all
$A \in \mathcal{A}$ (what happens in $\mathcal{E}$ stays in $\mathcal{E}$).
In this paper a {\it co-invariant subspace} will be a subspace whose
orthocomplement is invariant for $\mathcal{A}$, or equivalently a subspace
which is invariant for $\mathcal{A}^*$. Finally, a
{\it semi-invariant subspace} is the orthogonal difference of two invariant
subspaces, i.e., a closed subspace of the form $\mathcal{E}_1 \ominus
\mathcal{E}_2 = \mathcal{E}_1 \cap \mathcal{E}_2^\perp$ where $\mathcal{E}_1$
and $\mathcal{E}_2$ are invariant and $\mathcal{E}_2 \subseteq \mathcal{E}_1$.
Equivalently, $\mathcal{E} \subseteq\mathcal{H}$ is semi-invariant if there
is an orthogonal decomposition $\mathcal{H} = \mathcal{F}_1 \oplus \mathcal{E}
\oplus \mathcal{F}_2$ such that $\mathcal{F}_1 \oplus \mathcal{E}$
is invariant and $\mathcal{E} \oplus \mathcal{F}_2$ is co-invariant.
\begin{figure}[ht]\label{sifig}
$$\bbordermatrix{
&\mathcal{F}_1&\mathcal{E}&\mathcal{F}_2\cr
\mathcal{F}_1&*&*&*\cr
\mathcal{E}&0&*&*\cr
\mathcal{F}_2&0&0&*\cr}
$$
\caption{$\mathcal{F}_1 \oplus \mathcal{E}$ is invariant and
$\mathcal{E} \oplus \mathcal{F}_2$ is coinvariant}
\end{figure}
According to Theorem 2.16 of \cite{D}, a closed subspace $\mathcal{E}$ is
semi-invariant for $\mathcal{A}$ if and only if the map $A \mapsto PAP$ is
a homomorphism from $\mathcal{A}$ into $\mathcal{B}(\mathcal{E})$, i.e.,
$PABP = (PAP)(PBP)$ for all $A,B \in \mathcal{A}$. Here $P$ is the orthogonal
projection onto $\mathcal{E}$. In particular, if $\mathcal{E}$ is
semi-invariant for $\mathcal{A}$ then $P\mathcal{A}P \subseteq
\mathcal{B}(\mathcal{E})$
is still an operator algebra (and it is unital if $\mathcal{A}$ was).

In infinite dimensions we will mainly be interested in weak* closed
operator algebras, necessitating some small modifications in the next
definition. I will defer discussion of this aspect to Section 7. The rest
of the present section deals only with the finite dimensional setting.

\begin{defi}\label{hadef}
Let $\mathcal{A} \subseteq B(\mathbb{C}^n) \cong M_n$ be an operator algebra
and let $P \in M_n$ be the othogonal projection onto a subspace
$\mathcal{E} \subseteq \mathbb{C}^n$. Then $P\mathcal{A}P$ is
\smallskip

{\narrower{
\noindent (i) a {\it subobject} of $\mathcal{A}$ if $\mathcal{E}$ is
invariant for $\mathcal{A}$;

\noindent (ii) a {\it quotient} of $\mathcal{A}$ if $\mathcal{E}$ is
co-invariant for $\mathcal{A}$;

\noindent (iii) a {\it subquotient} of $\mathcal{A}$ if $\mathcal{E}$ is
semi-invariant for $\mathcal{A}$.
\smallskip

}}
\noindent $\mathcal{A}$ is {\it hereditarily antisymmetric} if every
subquotient of $\mathcal{A}$ is antisymmetric.
\end{defi}

There should be no confusion with the term ``quotient'' because in this
paper the word will always be used in the above sense, and never in the
more general sense of ``quotient by an ideal''.

By the comment made just above, subobjects, quotients, and subquotients
are always operator algebras. The intuition for why our subobjects are
rightly thought of as subobjects was explained above. For quotients,
the idea is that if $\mathcal{E}$ is invariant then the action of
$\mathcal{A}$ on $\mathbb{C}^n$ descends to an action on
$\mathbb{C}^n/\mathcal{E} \cong \mathcal{E}^\perp$.

For the definition of hereditary antisymmetry in infinite dimensions
see Definition \ref{ihadef}.

Note that $\mathbb{C}^n$ is a semi-invariant subspace for any
$\mathcal{A} \subseteq M_n$, so that $\mathcal{A} = I\mathcal{A}I$ is
always a subobject and a quotient of itself.

The next proposition is basic.

\begin{prop}\label{compofcomp}
Let $\mathcal{A} \subseteq M_n$ be an operator algebra.
Then any subquotient of a subquotient of $\mathcal{A}$
is a subquotient of $\mathcal{A}$.
\end{prop}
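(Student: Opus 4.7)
The plan is to invoke the characterization of semi-invariant subspaces recalled just above Definition \ref{hadef}: a subspace $\mathcal{E}$ is semi-invariant for $\mathcal{A}$ if and only if the compression map $A \mapsto PAP$ (with $P$ the orthogonal projection onto $\mathcal{E}$) is multiplicative on $\mathcal{A}$. Transitivity of ``is a subquotient of'' should then follow from the trivial observation that a composition of multiplicative compressions is again a multiplicative compression.

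Set things up as follows. Let $\mathcal{E} \subseteq \mathbb{C}^n$ be semi-invariant for $\mathcal{A}$, with orthogonal projection $P$, and let $\mathcal{F} \subseteq \mathcal{E}$ be semi-invariant for the subquotient $P\mathcal{A}P \subseteq \mathcal{B}(\mathcal{E})$, with orthogonal projection $Q$. Since $\mathcal{F} \subseteq \mathcal{E}$, the projection $Q$ (viewed as an operator on $\mathbb{C}^n$) satisfies $Q \leq P$, so $QP = PQ = Q$. In particular
\[
Q(P\mathcal{A}P)Q \;=\; Q\mathcal{A}Q,
\]
so the candidate subquotient of $\mathcal{A}$ is simply $Q\mathcal{A}Q$, and the only thing to check is that $\mathcal{F}$ itself is semi-invariant for $\mathcal{A}$.

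This reduces to a one-line computation using Theorem 2.16 of \cite{D}. For any $A, B \in \mathcal{A}$,
\[
QABQ \;=\; Q(PABP)Q \;=\; Q(PAP)(PBP)Q \;=\; (QAQ)(QBQ),
\]
where the first equality uses $QP = PQ = Q$, the second uses multiplicativity of the $P$-compression on $\mathcal{A}$ (i.e.\ semi-invariance of $\mathcal{E}$ for $\mathcal{A}$), and the third uses multiplicativity of the $Q$-compression on $P\mathcal{A}P$ applied to the elements $PAP$ and $PBP$, again simplified via $QP = PQ = Q$. Hence $A \mapsto QAQ$ is a homomorphism on $\mathcal{A}$, and Theorem 2.16 of \cite{D} then yields that $\mathcal{F}$ is semi-invariant for $\mathcal{A}$, so $Q\mathcal{A}Q$ is a subquotient of $\mathcal{A}$.

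There is no substantive obstacle: once the Sarason-type characterization is in hand, transitivity is forced. The only minor point to keep straight is the dual role of $Q$ as both a projection in $\mathcal{B}(\mathcal{E})$ and in $M_n$; the identification is harmless precisely because $\mathcal{F} \subseteq \mathcal{E}$, which is what makes $QP = PQ = Q$ hold and allows the two applications of the characterization to chain together.
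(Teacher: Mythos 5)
Your proof is correct, but it takes a genuinely different route from the paper's. You lean entirely on the Sarason characterization (Theorem 2.16 of \cite{D}, quoted just before Definition \ref{hadef}): semi-invariance is equivalent to multiplicativity of the compression, so transitivity of ``subquotient'' reduces to the one-line chain $QABQ = Q(PAP)(PBP)Q = (QAQ)(QBQ)$, using the forward direction of the equivalence twice and the reverse direction once to conclude that $\mathcal{F}$ is semi-invariant for $\mathcal{A}$. The paper instead argues directly with invariant subspaces: writing $\mathcal{E} = \mathcal{E}_1 \ominus \mathcal{E}_2$ and $\mathcal{F} = \mathcal{F}_1 \ominus \mathcal{F}_2$ with $\mathcal{F}_1, \mathcal{F}_2$ invariant for $P\mathcal{A}P$, it verifies by hand that $\mathcal{E}_2 + \mathcal{F}_1$ and $\mathcal{E}_2 + \mathcal{F}_2$ are invariant for $\mathcal{A}$ and that $\mathcal{F}$ is their orthogonal difference. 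Your argument is shorter and arguably cleaner, and it is the ``right'' abstract proof; the paper's version buys something concrete in exchange for the extra work, namely explicit invariant subspaces witnessing the semi-invariance of $\mathcal{F}$. That explicit decomposition matters later, since companion subspaces (Definition \ref{compdef}) are defined relative to a particular expression $\mathcal{E}_1 \ominus \mathcal{E}_2$, and several later proofs track exactly which invariant subspaces realize a given semi-invariant one. The only point worth flagging in your write-up is that the reverse direction of the Sarason equivalence (multiplicative implies semi-invariant) is doing real work in your last step; since the paper states the equivalence in both directions and itself invokes that direction in Proposition \ref{acprop2}, this is legitimate, but it is the one place your argument depends on a nontrivial imported fact rather than on first principles.
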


\begin{proof}
Let $P\mathcal{A}P$ be a subquotient of $\mathcal{A}$, where $P$ is
the orthogonal projection onto a semi-invariant subspace $\mathcal{E}$ for
$\mathcal{A}$. Say that $\mathcal{E} = \mathcal{E}_1 \ominus \mathcal{E}_2$
where $\mathcal{E}_1$ and $\mathcal{E}_2$ are invariant subspaces for
$\mathcal{A}$. Then let $QP\mathcal{A}PQ = Q\mathcal{A}Q$ be a subquotient
of $P\mathcal{A}P$, where $Q$ is the orthogonal projection onto
a semi-invariant subspace $\mathcal{F}$ for $P\mathcal{A}P$. Say that
$\mathcal{F} = \mathcal{F}_1 \ominus \mathcal{F}_2$ where $\mathcal{F}_1,
\mathcal{F}_2 \subseteq \mathcal{E}$ are invariant subspaces for
$P\mathcal{A}P$. We must show that $\mathcal{F}$ is semi-invariant for
$\mathcal{A}$; this will imply that $Q\mathcal{A}Q$ is a subquotient
of $\mathcal{A}$.

I claim that $\mathcal{E}_2 + \mathcal{F}_1$ is invariant for
$\mathcal{A}$. To see this, let $v \in \mathcal{E}_2$, $w \in \mathcal{F}_1$,
and $A \in \mathcal{A}$. Then $Aw \in \mathcal{E}_1$ (since $\mathcal{F}_1
\subseteq \mathcal{E}_1$) and $PAw = (PAP)w \in \mathcal{F}_1$ (since
$\mathcal{F}_1$ is invariant for $P\mathcal{A}P$). This shows that $Aw \in
\mathcal{E}_1 \ominus (\mathcal{E}\ominus \mathcal{F}_1) = \mathcal{E}_2 +
\mathcal{F}_1$, and therefore also $A(v+w) = Av + Aw \in \mathcal{E}_2 +
\mathcal{F}_1$. So $\mathcal{E}_2 + \mathcal{F}_1$ is invariant,
as claimed. By the same reasoning, $\mathcal{E}_2 + \mathcal{F}_2$ is
invariant for $\mathcal{A}$, and thus $\mathcal{F} = (\mathcal{E}_2 +
\mathcal{F}_1)\ominus (\mathcal{E}_2 + \mathcal{F}_2)$ is semi-invariant for
$\mathcal{A}$. This is what we needed to show.
\end{proof}

\begin{coro}\label{subha}
Any subquotient of a hereditarily antisymmetric operator algebra
in $M_n$ is hereditarily antisymmetric.
\end{coro}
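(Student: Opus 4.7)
The plan is to reduce this immediately to Proposition \ref{compofcomp}. Let $\mathcal{B} = P\mathcal{A}P$ be an arbitrary subquotient of a hereditarily antisymmetric operator algebra $\mathcal{A} \subseteq M_n$, where $P$ is the orthogonal projection onto a semi-invariant subspace $\mathcal{E}$ for $\mathcal{A}$. To show $\mathcal{B}$ is hereditarily antisymmetric, I need to verify that every subquotient of $\mathcal{B}$ is antisymmetric.

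So let $Q\mathcal{B}Q$ be a subquotient of $\mathcal{B}$, with $Q$ the orthogonal projection onto a semi-invariant subspace for $\mathcal{B}$. Since $Q$ projects onto a subspace of $\mathcal{E}$, we have $Q = QP = PQ$, so $Q\mathcal{B}Q = QP\mathcal{A}PQ = Q\mathcal{A}Q$. By Proposition \ref{compofcomp}, $Q\mathcal{A}Q$ is a subquotient of $\mathcal{A}$. Since $\mathcal{A}$ is hereditarily antisymmetric, $Q\mathcal{A}Q = Q\mathcal{B}Q$ is antisymmetric, which is exactly what we needed. This completes the proof; the whole argument is a one-line consequence of the transitivity of the subquotient relation established in the preceding proposition, and there is no real obstacle.
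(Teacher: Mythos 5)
Your proposal is correct and is exactly the argument the paper intends: Corollary \ref{subha} is stated as an immediate consequence of Proposition \ref{compofcomp}, whose proof already contains the identification $QP\mathcal{A}PQ = Q\mathcal{A}Q$ that you spell out. Nothing further is needed.
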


The following class of examples might give some intuition for the nature of
subobjects, quotients, and subquotients.

\begin{exam}\label{preorderex}
Let $\preceq$ be a preorder on the set $\{1, \ldots, n\}$ and define
$$\mathcal{A}_{\preceq} = {\rm span}\{E_{ij}: i \preceq j\} \subseteq M_n$$
where $E_{ij}$ is the matrix with a $1$ in the $(i,j)$ entry and $0$'s
elsewhere.  This is an algebra because $E_{ij}E_{jk} = E_{ik}$ and
$i \preceq j$, $j \preceq k$ $\Rightarrow$ $i \preceq k$. It is unital
because $I = E_{11} + \cdots + E_{nn}$.

Suppose $\mathcal{E} \subseteq \mathbb{C}^n$ is an invariant subspace for
$\mathcal{A}_{\preceq}$. For any $v \in \mathcal{E}$ and $1 \leq i \leq n$
we must have $E_{ii}v \in \mathcal{E}$, and this shows that $\mathcal{E}$
must be the span of some subset of the standard basis $\{e_1, \ldots, e_n\}$.
Thus, invariant subspaces for $\mathcal{A}_{\preceq}$ correspond to certain
subsets of $\{1, \ldots, n\}$. A moment's thought shows that the subsets of
$\{1, \ldots, n\}$ which correspond to invariant subspaces are precisely the
lower sets, i.e., sets $X$ which satisfy $i \preceq j \in X$
$\Rightarrow$ $i \in X$, while the subsets which correspond to orthocomplements
of invariant subspaces are precisely the upper sets satisfying the opposite
condition. The semi-invariant subspaces for
$\mathcal{A}_{\preceq}$ are therefore the subspaces of the form
${\rm span}\{e_i: i \in X\}$ where $X \subseteq \{1, \ldots, n\}$ is the
difference of two lower sets. This is equivalent to saying that $X$ is
convex, i.e., $i \preceq j \preceq k$ and $i,k \in X$ $\Rightarrow$
$j \in X$.

Thus, the subobjects of $\mathcal{A}$ correspond to lower subsets of
$\{1, \ldots, n\}$ under $\preceq$, the quotients correspond to upper
subsets, and the subquotients correspond to convex subsets.
\end{exam}

In this example the ordinary subalgebras of $\mathcal{A}_{\preceq}$ which
contain all the diagonal matrices correspond to preorders on
$\{1, \ldots, n\}$ which are weaker than $\preceq$ (cf.\ Theorem \ref{precthm}
below).

Sometimes a modified version of a semi-invariant subspace, which is not
orthogonal to $\mathcal{E}_2$ (in the notation from the beginning of this
section), is more natural. In those situations the following notion can be
helpful.

\begin{defi}\label{compdef}
Let $\mathcal{A} \subseteq M_n$ be an operator algebra and let
$\mathcal{E} = \mathcal{E}_1 \ominus \mathcal{E}_2$ be a semi-invariant
subspace for $\mathcal{A}$, where $\mathcal{E}_1$ and $\mathcal{E}_2$ are
invariant subspaces. A {\it companion subspace} of $\mathcal{E}$ (relative
to the expression of $\mathcal{E}$ as $\mathcal{E}_1 \ominus \mathcal{E}_2$,
but I will take this as understood) is any
complementary subspace $\mathcal{F}$ of $\mathcal{E}_2$ in $\mathcal{E}_1$.
That is, $\mathcal{F}$ is any linear subspace of $\mathcal{E}_1$ satisfying
$\mathcal{E}_2 + \mathcal{F} = \mathcal{E}_1$ and $\mathcal{E}_2 \cap
\mathcal{F} = \{0\}$. The {\it natural projection} onto $\mathcal{F}$ is
the (nonorthogonal) projection onto $\mathcal{F}$ whose kernel is
$\mathcal{E}_2 \oplus \mathcal{E}_1^\perp$.
\end{defi}

\begin{prop}\label{compprop}
Let $\mathcal{E} = \mathcal{E}_1\ominus\mathcal{E}_2$ be a semi-invariant
subspace for an operator algebra $\mathcal{A} \subseteq M_n$ and
let $\mathcal{F}$ be a companion subspace of $\mathcal{E}$. Let $P$ be the
orthogonal projection onto $\mathcal{E}$, let $P_0: \mathcal{F} \to
\mathcal{E}$ be its restriction to $\mathcal{F}$, and let
$Q \in M_n$ be the natural projection onto $\mathcal{F}$. Then
$\Phi: T \mapsto P_0TP_0^{-1}$ defines an isomorphism between $Q\mathcal{A}Q
\subseteq B(\mathcal{F})$ and $P\mathcal{A}P \subseteq B(\mathcal{E})$.
\end{prop}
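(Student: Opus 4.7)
The plan is to first check that the restriction $P_0: \mathcal{F} \to \mathcal{E}$ really is a linear bijection, then compute explicitly that $P_0(QAQ)P_0^{-1} = PAP$ for every $A \in \mathcal{A}$, and finally observe that since $\Phi$ is conjugation by an invertible linear map it is automatically an algebra isomorphism once we have shown that it sends $Q\mathcal{A}Q$ onto $P\mathcal{A}P$. The two parallel decompositions $\mathcal{E}_1 = \mathcal{E}_2 \oplus \mathcal{E}$ (orthogonal) and $\mathcal{E}_1 = \mathcal{E}_2 \oplus \mathcal{F}$ (the non-orthogonal splitting defining $\mathcal{F}$) are the main bookkeeping tools.

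For invertibility of $P_0$: if $v \in \mathcal{F}$ and $Pv = 0$, then $v \in \mathcal{E}_1 \cap \mathcal{E}^\perp = \mathcal{E}_2$, so $v \in \mathcal{F} \cap \mathcal{E}_2 = \{0\}$; conversely, given $w \in \mathcal{E} \subseteq \mathcal{E}_1$, write $w = v_2 + v$ with $v_2 \in \mathcal{E}_2$ and $v \in \mathcal{F}$, and note that $Pv_2 = 0$ since $\mathcal{E}_2 \perp \mathcal{E}$, hence $P_0 v = Pw = w$. So $P_0$ is bijective.

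For the key identity, fix $A \in \mathcal{A}$ and $w \in \mathcal{E}$, and let $v = P_0^{-1} w \in \mathcal{F}$, so that $w = v - v_2$ for the unique $v_2 \in \mathcal{E}_2$ produced by the non-orthogonal decomposition of $v$ along $\mathcal{E}_2 \oplus \mathcal{E}$. Since $v \in \mathcal{F}$, $Qv = v$, and since $\mathcal{E}_1$ is invariant for $\mathcal{A}$, $Av \in \mathcal{E}_1$; write $Av = u_2 + u$ with $u_2 \in \mathcal{E}_2$ and $u \in \mathcal{F}$, so $(QAQ)v = QAv = u$. Applying $P_0$ and using $Pu_2 = 0$ gives $P_0 u = Pu = P(Av) - Pu_2 = PAv$. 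Finally, since $\mathcal{E}_2$ is also invariant for $\mathcal{A}$, $Av_2 \in \mathcal{E}_2$, so $PAv_2 = 0$ and therefore $PAv = PA(w + v_2) = PAw = PAPw$. Combining, $P_0(QAQ)P_0^{-1} w = PAPw$, which shows $\Phi(QAQ) = PAP$.

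Thus $\Phi$ maps $Q\mathcal{A}Q$ onto $P\mathcal{A}P$, and since $T \mapsto P_0 T P_0^{-1}$ is clearly a linear bijection $B(\mathcal{F}) \to B(\mathcal{E})$ that preserves products (it is conjugation by the invertible operator $P_0$), it restricts to an algebra isomorphism between the two subalgebras. The only real obstacle in the argument is keeping the two different direct sum decompositions of $\mathcal{E}_1$ straight and invoking invariance of both $\mathcal{E}_1$ and $\mathcal{E}_2$ at the right places; everything else is formal.
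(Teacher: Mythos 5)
Your proof is correct and follows essentially the same route as the paper's: both establish $\Phi(QAQ)v = PAPv$ for each $v \in \mathcal{E}$ by exploiting the invariance of $\mathcal{E}_1$ and $\mathcal{E}_2$, the paper by noting that $P$, $Q$, $P_0$, $P_0^{-1}$, and $A$ all act as the identity modulo $\mathcal{E}_2$, and you by tracking the same information through explicit components in the two decompositions $\mathcal{E}_1 = \mathcal{E}_2 \oplus \mathcal{E}$ and $\mathcal{E}_1 = \mathcal{E}_2 + \mathcal{F}$. The only (welcome) addition is your explicit verification that $P_0$ is bijective, which the paper leaves implicit.
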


\begin{proof}
I claim that $\Phi(QAQ) = PAP$ for all $A \in \mathcal{A}$; this will show that
$\Phi$ maps $Q\mathcal{A}Q$ bijectively onto $P\mathcal{A}P$. The fact that
$\Phi$ is an algebra homomorphism is straightforward.

To prove the claim, observe first that for any $v \in \mathcal{E}_1$ we have
$Pv, Qv \in v + \mathcal{E}_2$ since ${\rm ker}(P) \cap \mathcal{E}_1
= {\rm ker}(Q) \cap \mathcal{E}_1 = \mathcal{E}_2$. Likewise for $P_0v$
when $v \in \mathcal{F}$ and $P_0^{-1}v$ when $v \in \mathcal{E}$. So fixing
$A \in \mathcal{A}$ and $v \in \mathcal{E}$, we have
$QP_0^{-1}v = P_0^{-1}v \in v + \mathcal{E}_2$, and then $AQP_0^{-1}v
\in Av + \mathcal{E}_2$, and then $QAQP_0^{-1}v \in Av + \mathcal{E}_2$ and
$P_0QAQP_0^{-1}v \in Av + \mathcal{E}_2$. But also
$PAPv = PAv \in Av + \mathcal{E}_2$, so that
$$PAPv - \Phi(QAQ)v \in \mathcal{E}_2 \cap \mathcal{E} = \{0\}.$$
Since $v$ was arbitrary, this shows that $\Phi(QAQ) = PAP$.
\end{proof}

\section{Basic facts}

Before we discuss hereditarily antisymmetric operator algebras, it will be
helpful to collect some basic facts about operator algebras, both
antisymmetric and not. These results mostly pertain to the finite dimensional
setting which will be our primary focus.

The first result, however, applies to both the finite and infinite
dimensional settings. It provides a simple alternative characterization
of antisymmetry. This was Proposition 1 (i, iv) of \cite{Sz}, but I include
the proof for the sake of completeness.

\begin{prop}\label{sainf}
Let $\mathcal{A} \subseteq \mathcal{B}(\mathcal{H})$ be an operator algebra.
Then $\mathcal{A}$ is antisymmetric if and only if every self-adjoint element
of $\mathcal{A}$ is a scalar multiple of the identity. If $\mathcal{A}$ is
weak* closed, then it is antisymmetric if and only if it contains no
orthogonal projections besides $0$ and (possibly) $I$.
\end{prop}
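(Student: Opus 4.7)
The plan is to prove the two biconditionals in turn. For the first, if $\mathcal{A}$ is antisymmetric and $A \in \mathcal{A}$ is self-adjoint, then $A \in \mathcal{A} \cap \mathcal{A}^*$, which by assumption is $\{0\}$ or $\mathbb{C}\cdot I$, so $A$ is a scalar multiple of the identity. Conversely, given $B \in \mathcal{A} \cap \mathcal{A}^*$, observe that $B^* \in \mathcal{A}$ as well, so the real and imaginary parts $B_1 = (B+B^*)/2$ and $B_2 = (B-B^*)/(2i)$ are self-adjoint elements of $\mathcal{A}$. By hypothesis each is a scalar multiple of $I$, hence $B \in \mathbb{C}\cdot I$. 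Thus $\mathcal{A} \cap \mathcal{A}^* \subseteq \mathbb{C}\cdot I$; whether it equals $\{0\}$ or all of $\mathbb{C}\cdot I$ depends on whether it contains any nonzero scalar multiple of $I$, which would force $I$ itself into $\mathcal{A}\cap\mathcal{A}^*$.

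For the second biconditional, assume $\mathcal{A}$ is weak* closed. The forward direction is immediate from the first part: an orthogonal projection $P \in \mathcal{A}$ is self-adjoint, so $P = \lambda I$ and $P^2 = P$ forces $\lambda \in \{0,1\}$. For the converse, I aim to show every self-adjoint $A \in \mathcal{A}$ is a scalar multiple of $I$ by producing, whenever it is not, a nontrivial orthogonal spectral projection of $A$ inside $\mathcal{A}$. The complication is that $\mathcal{A}$ need not contain $I$, so only polynomials $p$ with $p(0) = 0$ yield $p(A) \in \mathcal{A}$ by pure algebra. My strategy is to bootstrap in two steps: first use the Stone-Weierstrass theorem on the compact set $\sigma(A) \cup \{0\}$ pointed at $0$ to conclude that $f(A) \in \mathcal{A}$ for every continuous $f$ on $\sigma(A)\cup\{0\}$ vanishing at $0$ (these are uniform limits of polynomials with zero constant term, and $\mathcal{A}$ is norm closed); then use bounded pointwise approximation together with the dominated convergence theorem for scalar spectral measures, plus weak* closedness of $\mathcal{A}$, to conclude $f(A) \in \mathcal{A}$ for every bounded Borel $f$ on $\mathbb{R}$ with $f(0) = 0$.

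Assuming for contradiction that $A$ is not a scalar multiple of $I$, $\sigma(A)$ contains two distinct points $\alpha < \beta$; choose $\gamma \in (\alpha,\beta)\setminus\{0\}$, which is possible since $(\alpha,\beta)$ has positive length. Depending on the sign of $\gamma$, one of the indicators $\chi_{(\gamma,\infty)}$ or $\chi_{(-\infty,\gamma]}$ vanishes at $0$, so the corresponding spectral projection of $A$ lies in $\mathcal{A}$ by the previous paragraph. This projection is nontrivial since both $\alpha$ and $\beta$ give positive spectral mass to any open neighborhood of themselves, producing mass on both sides of $\gamma$. This contradicts the hypothesis that $0$ and $I$ are the only orthogonal projections in $\mathcal{A}$. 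The main obstacle, as flagged above, is the possibly nonunital case; choosing $\gamma \neq 0$ and selecting the indicator that vanishes at $0$ is exactly what sidesteps the need to have $I \in \mathcal{A}$.
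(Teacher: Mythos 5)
Your proof is correct and follows the same overall strategy as the paper: the first biconditional is handled by passing to real and imaginary parts of an element of $\mathcal{A} \cap \mathcal{A}^*$, and the second by using weak* closedness to extract a nontrivial spectral projection from a non-scalar self-adjoint element. One point in your favor: the paper's version of the second step simply asserts that $\mathcal{A}$ ``contains the von Neumann algebra generated by $A$,'' which is literally too strong when $\mathcal{A}$ is nonunital (that von Neumann algebra contains $I$); your two-step functional-calculus argument, restricting to functions vanishing at $0$ and choosing the cut point $\gamma \neq 0$ so that one of the two complementary spectral projections avoids $0$, handles exactly this case and is the more careful route to the same conclusion.
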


\begin{proof}
The forward implication in the first assertion is trivial. For the reverse
implication, suppose
$\mathcal{A}$ is not antisymmetric. Then there is some $A \in \mathcal{A}
\cap \mathcal{A}^*$ which is not a scalar multiple of the identity. Since
both $A$ and its adjoint belong to $\mathcal{A}$, its real and imaginary
parts ${\rm Re}(A) = \frac{1}{2}(A + A^*)$ and ${\rm Im}(A) =
\frac{1}{2i}(A - A^*)$ both also belong to $\mathcal{A}$, and (since $A =
{\rm Re}(A) + i\cdot {\rm Im}(A)$) they cannot both be scalar multiplies of
the identity. So $\mathcal{A}$ contains a self-adjoint operator which is not
a scalar multiple of the identity.

The forward implication in the second assertion is also trivial. For the
reverse implication, suppose $\mathcal{A}$ is not antisymmetric. Then it
contains a non-scalar self-adjoint operator $A$ by the first part of the
proposition. Since $\mathcal{A}$ is weak* closed, it then contains the von
Neumann algebra generated by $A$, and hence it must contain a nontrivial
orthogonal projection.
\end{proof}

Next, we show that antisymmetric algebras can always be unitized. This works
in infinite dimensions, too.

\begin{prop}\label{unitprop}
Let $\mathcal{A} \subset \mathcal{B}(\mathcal{H})$ be a nonunital operator
algebra. If $\mathcal{A}$ is antisymmetric then so is its unitization
$\mathcal{A} + \mathbb{C}\cdot I$.
\end{prop}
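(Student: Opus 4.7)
The plan is to prove antisymmetry of $\mathcal{A} + \mathbb{C}\cdot I$ via the equivalent characterization from Proposition \ref{sainf}: namely, that every self-adjoint element of the algebra is a scalar multiple of $I$. Since $\mathcal{A}$ is nonunital, its antisymmetry must take the form $\mathcal{A}\cap \mathcal{A}^* = \{0\}$ rather than $\mathbb{C}\cdot I$ (the latter would force $I \in \mathcal{A}$). This observation will be the workhorse of the argument.

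I would take an arbitrary self-adjoint $T = A + \lambda I \in \mathcal{A} + \mathbb{C}\cdot I$ and exploit $T = T^*$ to deduce $A - A^* = (\bar\lambda - \lambda) I$. The easy case is $\lambda \in \mathbb{R}$: then $A = A^*$, so $A \in \mathcal{A} \cap \mathcal{A}^* = \{0\}$, and $T = \lambda I$ as desired.

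The harder case is $\lambda \notin \mathbb{R}$, where one has $A^* = A - \alpha I$ with $\alpha = \bar\lambda - \lambda \neq 0$, and the goal is to derive a contradiction. The key trick is to multiply by $A$ on the right to land back inside $\mathcal{A}$: compute $A^*A = (A - \alpha I)A = A^2 - \alpha A$, which lies in $\mathcal{A}$ because $A^2$ and $A$ both do. But $A^*A$ is automatically self-adjoint, so it belongs to $\mathcal{A} \cap \mathcal{A}^* = \{0\}$, forcing $A^*A = 0$ and hence $A = 0$. Then $\alpha I = A^* - A = 0$ contradicts $\alpha \neq 0$, ruling this case out.

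The main (mild) obstacle is the non-real $\lambda$ case, and the only real creativity needed is recognizing that one can convert the ``$A^*$ differs from $A$ by a scalar'' relation into a genuine self-adjoint element of $\mathcal{A}$ by forming the product $A^*A$, which simultaneously lives in $\mathcal{A}$ (after substituting) and in $\mathcal{A}^*$ (being self-adjoint); once this is in hand, antisymmetry of $\mathcal{A}$ shuts the case down immediately.
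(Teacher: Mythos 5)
Your proof is correct and is essentially the paper's argument: both reduce via Proposition \ref{sainf} to a self-adjoint element $A+\lambda I$, derive $A^* = A - \alpha I$, and use the product $A^*A = A^2 - \alpha A \in \mathcal{A}$ together with $\mathcal{A}\cap\mathcal{A}^* = \{0\}$ for nonunital antisymmetric algebras. The only differences are cosmetic (you split off the real-$\lambda$ case and argue directly rather than by contraposition, and there is a harmless sign slip in $A^* - A = -\alpha I$, which does not affect the conclusion).
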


\begin{proof}
Suppose $\mathcal{A} + \mathbb{C}\cdot I$ is not antisymmetric. Then
according to Proposition \ref{sainf} there exists a self-adjoint operator
of the form $A + aI$ with $A \in \mathcal{A}$ nonzero. Then $A + aI =
A^* + \bar{a}I$ implies that $A = A^* + bI$ where $b = \bar{a} - a$. But then
$A^2 = A^*A + bA \in \mathcal{A}$, and hence $A^*A \in \mathcal{A}$. Thus
$\mathcal{A}$ contains a nonzero self-adjoint operator, which means that it
cannot be antisymmetric either.
\end{proof}

Proposition \ref{sainf} can be strengthened in the finite dimensional setting.
We need the following lemma, which is probably well-known.

\begin{lemma}\label{linflemma}
Every subalgebra of $l^\infty_n = l^\infty(\{1, \ldots, n\})$ is self-adjoint.
\end{lemma}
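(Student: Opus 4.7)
The plan is to identify $l^\infty_n$ with $\mathbb{C}^n$ equipped with pointwise multiplication, so that a subalgebra $\mathcal{B}$ is a linear subspace of $\mathbb{C}^n$ closed under pointwise products. Given $f \in \mathcal{B}$, I want to write $\bar{f}$ as a polynomial in $f$ with \emph{no constant term}; then $\bar{f}$ will lie in $\mathcal{B}$ automatically, even when $\mathcal{B}$ is nonunital, since every power $f^m$ with $m \geq 1$ is in $\mathcal{B}$ and the linear span is closed in $\mathcal{B}$.

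The construction is a Lagrange interpolation, adjusted to kill the constant term. Let $a_1, \dots, a_k$ be the distinct nonzero values attained by $f$ on $\{1,\dots,n\}$, and seek
\[
p(x) = c_1 x + c_2 x^2 + \cdots + c_k x^k
\]
with $p(a_i) = \bar{a}_i$ for $i = 1, \dots, k$. This is a $k\times k$ linear system in the $c_j$ whose coefficient matrix is $(a_i^j)_{i,j=1}^k$; its determinant factors as $\bigl(\prod_i a_i\bigr) \cdot V(a_1,\dots,a_k)$, where $V$ is the Vandermonde determinant. Since the $a_i$ are distinct and nonzero, the system is solvable.

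With such a $p$ in hand, evaluating $p(f)$ pointwise gives $p(f)(j) = \bar{f}(j)$ at every $j$ where $f(j) = a_i$ for some $i$, and $p(f)(j) = p(0) = 0 = \overline{f(j)}$ wherever $f(j) = 0$. Hence $p(f) = \bar{f}$ as elements of $l^\infty_n$. Because $p$ has no constant term, $p(f)$ is a linear combination of $f, f^2, \dots, f^k \in \mathcal{B}$, so $\bar{f} \in \mathcal{B}$. As $f$ was arbitrary, $\mathcal{B} = \mathcal{B}^*$.

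I do not anticipate any serious obstacle; the only subtlety is to avoid a constant term in $p$ so that the argument works for nonunital subalgebras, and this is handled by the observation that restricting to polynomials with no constant term still leaves a Vandermonde-type system that is invertible because the nonzero values of $f$ are distinct.
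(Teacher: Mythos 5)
Your proof is correct and is essentially the paper's argument: both express $\bar f$ as a polynomial in $f$ with no constant term, interpolating at the finitely many distinct nonzero values of $f$ (the paper does this via Lagrange polynomials producing idempotents $f_a$ and then takes the linear combination $\sum \bar a f_a$, while you solve the Vandermonde-type system directly, but the idea is the same). No gaps.
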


\begin{proof}
Let $\mathcal{A}$ be a subalgebra of $l^\infty_n$, let $f \in \mathcal{A}$,
and let $X = {\rm Ran}(f) \cup \{0\}$. For each nonzero $a \in X$, find a
complex polynomial $p$ satisfying $p(a) = 1$ and $p(b) = 0$ for all other
$b \in X$. Then $p$ has no constant term, so $f_a = p\circ f \in \mathcal{A}$.
This shows that we can write $f = \sum_{a \in X\setminus\{0\}} af_a$ where
each $f_a$ belongs to $\mathcal{A}$ and takes only the values $0$ and $1$.
Thus $\bar{f} = \sum \bar{a}f_a$ also belongs to $\mathcal{A}$. We conclude
that $\mathcal{A}$ is self-adjoint.
\end{proof}

The situation for $l^\infty = l^\infty(\mathbb{N})$ could not be more
different; see Proposition \ref{hinfty}.

It is standard that the self-adjoint unital subalgebras of $l^\infty_n$
correspond to the equivalence relations on $\{1, \ldots, n\}$, by associating
an equivalence relation $\sim$ to the set of functions in $l^\infty_n$ which
are constant on each block of $\sim$. Any nonunital self-adjoint subalgebra
is, for some equivalence relation $\sim$, the set of all functions which are
constant on each block of $\sim$ and which vanish on some specified block.

\begin{prop}\label{tfaefd}
Let $\mathcal{A} \subseteq M_n$ be an operator algebra. The following are
equivalent:

{\narrower{
\noindent (i) $\mathcal{A}$ is antisymmetric

\noindent (ii) every self-adjoint element of $\mathcal{A}$ is a scalar
multiple of the identity

\noindent (iii) there are no orthogonal projections in $\mathcal{A}$
besides $0$ and (possibly) $I$

\noindent (iv) every unitary element of $\mathcal{A} + \mathbb{C}\cdot I$
is a scalar multiple of the identity

\noindent (v) every normal element of $\mathcal{A}$ is a scalar multiple of
the identity.

}}
\end{prop}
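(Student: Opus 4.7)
The plan is to build the chain of equivalences around Proposition \ref{sainf}, which already handles (i) $\Leftrightarrow$ (ii) in full generality and (i) $\Leftrightarrow$ (iii) under a weak* closure hypothesis. Since any operator algebra in $M_n$ is finite dimensional, it is automatically norm closed, hence weak* closed, so (i), (ii), and (iii) are immediately equivalent.

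To add (v), the implication (v) $\Rightarrow$ (ii) is trivial. For (iii) $\Rightarrow$ (v), suppose $A \in \mathcal{A}$ is normal and not a scalar multiple of $I$. Then $A$ has at least two distinct eigenvalues, and by the spectral theorem we may write $A = \sum_{j=1}^k \lambda_j P_j$ with the $P_j$ the orthogonal spectral projections. The idea is to recover some $P_i$ as a polynomial in $A$. To accommodate the possibility that $\mathcal{A}$ is nonunital, I would use Lagrange interpolation on the points $\{0\} \cup \sigma(A)$ (or $\sigma(A)$ itself when $0 \in \sigma(A)$) to produce a polynomial $p$ with $p(0) = 0$, $p(\lambda_i) = 1$, and $p(\lambda_j) = 0$ for $j \neq i$. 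Choosing $i$ so that $\lambda_i \neq 0$ (possible since $k \geq 2$), the element $p(A) = P_i$ lies in $\mathcal{A}$ and is a nonzero orthogonal projection different from $I$, contradicting (iii). Hence $A$ has a single eigenvalue and, being normal, must be a scalar multiple of $I$.

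For (iv), the forward implication (i) $\Rightarrow$ (iv) I would derive by invoking Proposition \ref{unitprop}: $\mathcal{A}+\mathbb{C}\cdot I$ is antisymmetric, and applying the already established implication (i) $\Rightarrow$ (v) to the unitization shows that every normal element of $\mathcal{A}+\mathbb{C}\cdot I$, in particular every unitary, is a scalar multiple of $I$. For the converse (iv) $\Rightarrow$ (iii), given a nonzero orthogonal projection $P \in \mathcal{A}$, the operator $U = I - 2P$ is a self-adjoint unitary in $\mathcal{A}+\mathbb{C}\cdot I$, so by (iv) it equals $cI$ for some scalar $c$; then $P = \tfrac{1-c}{2}I$, and for an orthogonal projection this forces $c \in \{1,-1\}$ and hence $P \in \{0, I\}$.

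The only nonroutine step is (iii) $\Rightarrow$ (v), and the main obstacle there is exactly the nonunital case: the standard Lagrange interpolants producing the spectral projections generally have nonzero constant term, so $p(A)$ would land in $\mathcal{A} + \mathbb{C}\cdot I$ rather than in $\mathcal{A}$. Imposing the extra interpolation condition $p(0) = 0$ removes this obstruction at the (harmless) cost of raising the degree by one.
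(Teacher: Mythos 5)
Your proof is correct, and the logic closes properly: (i)--(iii) from Proposition \ref{sainf} (weak* closure being automatic in finite dimensions), then (iii) $\Rightarrow$ (v) $\Rightarrow$ (ii) brings in (v), and (i) $\Rightarrow$ (iv) $\Rightarrow$ (iii) brings in (iv), with no circularity since (i) $\Rightarrow$ (v) is already available when you invoke it for the unitization. Your route differs from the paper's in both directions of travel and in technique. The paper proves (i) $\Rightarrow$ (v) by diagonalizing the normal operator and invoking Lemma \ref{linflemma} (every subalgebra of $l^\infty_n$ is self-adjoint) to conclude $A^* \in \mathcal{A}$; you instead prove (iii) $\Rightarrow$ (v) by extracting a nontrivial spectral projection via Lagrange interpolation with vanishing constant term. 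These are close cousins --- the proof of Lemma \ref{linflemma} manufactures exactly those characteristic functions --- but your version targets the projection condition directly and skips the lemma. More substantively, the paper closes the loop through (v) $\Rightarrow$ (iv) $\Rightarrow$ (ii), where (iv) $\Rightarrow$ (ii) requires the one-parameter unitary group $e^{iAt}$ and the limit $A = \lim_{t\to 0}\frac{1}{it}(e^{iAt}-I)$; your (iv) $\Rightarrow$ (iii) via the symmetry $U = I - 2P$ is purely algebraic and arguably cleaner, at the cost of making (i) $\Rightarrow$ (iv) lean on Proposition \ref{unitprop}, which the paper's proof of this proposition does not use. Both arguments are sound; yours trades the analytic exponential argument for an extra appeal to the unitization result, and isolates the same nonunitality obstruction (nonzero constant terms) that the paper buries inside Lemma \ref{linflemma}.
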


\begin{proof}
The equivalence of (i), (ii), and (iii) was shown in Proposition \ref{sainf}.

For (i) $\Rightarrow$ (v), suppose $\mathcal{A}$ contains a normal
operator $A$ which is not a scalar multiple of the identity. Working in an
orthonormal basis which diagonalizes $A$, the operator algebra generated by
$A$ constitutes
a subalgebra of the diagonal subalgebra of $M_n$, which can be identified with
$l^\infty_n$. We infer from Lemma \ref{linflemma} that $A^*$ also belongs to
this subalgebra, and hence that $A^* \in \mathcal{A}$. So $A \in \mathcal{A}
\cap \mathcal{A}^*$, showing that $\mathcal{A}$ is not antisymmetric.

For (v) $\Rightarrow$ (iv), suppose there is a non-scalar unitary
$U = A + \alpha I$ with $A \in \mathcal{A}$. Then $A$ is a non-scalar
normal operator in $\mathcal{A}$.

For (iv) $\Rightarrow$ (ii), suppose there is a self-adjoint operator
$A$ in $\mathcal{A}$ which is not a scalar multiple of the identity. Then
$e^{iAt} \in \mathcal{A} + \mathbb{C}\cdot I$ for every $t \in \mathbb{R}$;
one can infer this from Lemma \ref{linflemma} or simply consider the power
series expansion of $e^{iAt}$. This is a one-parameter unitary group, and
since $A = \lim_{t \to 0}\frac{1}{it}(e^{iAt} - I)$, the operators $e^{iAt}$
cannot all be scalar multiples of the identity. (Alternatively, one can
deduce the implication (iv) $\Rightarrow$ (ii) by applying the comment made
after Lemma \ref{linflemma} to the algebra generated by $A$ in an orthonormal
basis which diagonalizes $A$.)
\end{proof}

The equivalence with conditions (iv) and (v) fails in infinite
dimensions; see Example \ref{inormal}.

Next we note a simple dimensional restriction on antisymmetric subalgebras
of $M_n$.

\begin{prop}\label{dimcount}
Suppose an operator algebra $\mathcal{A} \subseteq M_n$ has dimension at
least $\frac{n^2}{2} + 1$. Then $\mathcal{A}$ is not antisymmetric.
\end{prop}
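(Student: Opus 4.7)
The plan is a pure dimension count in the complex vector space $M_n$, which has complex dimension $n^2$. The strategy is to show that $\mathcal{A}$ being so large forces $\mathcal{A}\cap\mathcal{A}^*$ to have complex dimension at least $2$, which by definition rules out antisymmetry.

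First I would observe that the Hermitian transpose $A \mapsto A^*$ is a conjugate-linear bijection on $M_n$, so $\mathcal{A}^* = \{A^* : A \in \mathcal{A}\}$ is a complex linear subspace of $M_n$ with $\dim_{\mathbb{C}} \mathcal{A}^* = \dim_{\mathbb{C}} \mathcal{A}$. Then I would apply the standard subspace dimension identity
$$\dim(\mathcal{A} + \mathcal{A}^*) + \dim(\mathcal{A}\cap\mathcal{A}^*) = \dim\mathcal{A} + \dim\mathcal{A}^* = 2\dim\mathcal{A}.$$
Since $\mathcal{A} + \mathcal{A}^* \subseteq M_n$, we have $\dim(\mathcal{A}+\mathcal{A}^*) \leq n^2$, hence
$$\dim(\mathcal{A}\cap\mathcal{A}^*) \geq 2\dim\mathcal{A} - n^2 \geq 2\!\left(\tfrac{n^2}{2}+1\right) - n^2 = 2.$$

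Since $\mathcal{A}\cap\mathcal{A}^*$ has complex dimension at least $2$, it cannot equal $\{0\}$ or $\mathbb{C}\cdot I$, so $\mathcal{A}$ is not antisymmetric. There is no real obstacle here beyond noting that the $*$-operation is antilinear rather than linear, which does not affect dimensions; the whole proof fits in a few lines.
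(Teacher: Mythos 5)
Your proof is correct, but it takes a genuinely different route from the paper's. The paper works with \emph{real} dimensions: it notes that $M_n$ has real dimension $2n^2$ and that the traceless self-adjoint matrices form a real subspace of real dimension $n^2-1$, so a complex subspace of complex dimension at least $\tfrac{n^2}{2}+1$ (hence real dimension at least $n^2+2$) must meet that real subspace nontrivially; this produces a nonzero traceless (hence nonscalar) self-adjoint element of $\mathcal{A}$, and Proposition \ref{sainf} finishes the job. You instead stay entirely in the complex category: since $A \mapsto A^*$ is a conjugate-linear bijection, $\mathcal{A}^*$ is a complex subspace of the same dimension, and the Grassmann identity forces $\dim(\mathcal{A}\cap\mathcal{A}^*) \ge 2\dim\mathcal{A} - n^2 \ge 2$, which contradicts the definition of antisymmetry directly. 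Your version is arguably a bit cleaner: it avoids the real/complex dimension bookkeeping and does not need the characterization of antisymmetry via self-adjoint elements, working straight from the definition $\mathcal{A}\cap\mathcal{A}^* \in \{\{0\},\ \mathbb{C}\cdot I\}$. What the paper's version buys in exchange is the slightly stronger explicit output of a nonscalar self-adjoint element of $\mathcal{A}$ itself, which fits the way antisymmetry is used elsewhere in the paper. Both arguments, incidentally, use only that $\mathcal{A}$ is a linear subspace, not that it is an algebra.
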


\begin{proof}
$M_n$ has real dimension $2n^2$, and the set of self-adjoint $n\times n$
matrices with zero trace is a real linear subspace of $M_n$ of real dimension
$n^2 - 1$. Then any complex linear subspace of $M_n$ whose complex dimension
is at least $\frac{n^2}{2} + 1$ will have real dimension at least $n^2 + 2$
and hence must nontrivially intersect the space of self-adjoint matrices with
zero trace. Thus no operator algebra of dimension at least $\frac{n^2}{2} + 1$
can be antisymmetric.
\end{proof}

Putting operator algebras in upper triangular form, i.e., finding a basis
with respect to which the matrix of every element of the algebra is upper
triangular, will be a recurring theme in this paper. The next result is basic.
In order for the notion of ``upper triangular'' to be meaningful, we need not
merely a basis, but an {\it ordered} basis in which the basis vectors appear
in a specified order.

\begin{prop}\label{orthout}
Let $\mathcal{A} \subseteq M_n$ be an operator algebra and suppose there
is a (possibly nonorthogonal) ordered basis of $\mathbb{C}^n$ with respect
to which the matrix of every element of $\mathcal{A}$ is upper triangular.
Then there is an ordered
orthonormal basis with the same property. If in addition the matrix
with respect to the first basis of every element of $\mathcal{A}$ is constant
on its main diagonal, then the same will be true of the matrix with respect
to the second basis of every element of $\mathcal{A}$.
\end{prop}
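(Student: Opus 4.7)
The plan is to apply Gram--Schmidt to the given ordered basis and to translate both upper triangularity and diagonal constancy into coordinate-free statements about an invariant flag, which is then obviously preserved.

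First I would note that an ordered basis $v_1,\dots,v_n$ puts every $T\in\mathcal{A}$ in upper triangular form precisely when each subspace $\mathcal{F}_k:=\mathrm{span}(v_1,\dots,v_k)$ is invariant for $\mathcal{A}$. Let $u_1,\dots,u_n$ be the ordered orthonormal basis obtained from $v_1,\dots,v_n$ by the Gram--Schmidt process. By construction $\mathrm{span}(u_1,\dots,u_k)=\mathcal{F}_k$ for every $k$, so the same flag is witnessed by the $u_i$'s. Therefore each $\mathcal{F}_k$ is still invariant relative to the new basis, and the matrix of every $T\in\mathcal{A}$ in the $u$-basis is upper triangular. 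This handles the first assertion.

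For the second assertion, write $u_k=\alpha_k v_k+w_k$ where $\alpha_k\neq 0$ and $w_k\in\mathcal{F}_{k-1}$; such a decomposition is automatic from Gram--Schmidt. Fix $T\in\mathcal{A}$ and let $c=c(T)$ be its common diagonal entry in the $v$-basis, so that $Tv_k-cv_k\in\mathcal{F}_{k-1}$ for each $k$. Invariance of $\mathcal{F}_{k-1}$ also gives $Tw_k\in\mathcal{F}_{k-1}$, hence
\[
Tu_k-cu_k=\alpha_k(Tv_k-cv_k)+(Tw_k-cw_k)\in\mathcal{F}_{k-1}=\mathrm{span}(u_1,\dots,u_{k-1}).
\]
This says precisely that the $(k,k)$ entry of $T$ in the $u$-basis is $c$ as well, so the matrix of $T$ in the orthonormal basis is again constant on the main diagonal (with the same scalar $c(T)$).

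There is no real obstacle here; the only thing to be careful about is that ``constant on the main diagonal'' allows the constant to depend on $T$, and the argument above respects this by fixing $T$ throughout. One could alternatively package the proof as a change-of-basis computation $T_u=R^{-1}T_v R$ with $R$ upper triangular, noting that conjugation by an upper triangular $R$ sends $cI+N$ (with $N$ strictly upper triangular) to $cI+R^{-1}NR$, which is again of that form; but the flag-invariance formulation given above avoids any matrix manipulation.
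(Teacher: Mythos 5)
Your proof is correct. The first assertion is handled exactly as in the paper: upper triangularity is equivalent to invariance of the flag $\mathcal{F}_k = \mathrm{span}(v_1,\dots,v_k)$, and Gram--Schmidt preserves that flag. For the second assertion you diverge from the paper's argument. The paper observes that the main diagonal entries of an upper triangular matrix are precisely the eigenvalues of the operator, so ``constant on the main diagonal'' is equivalent to the operator having exactly one eigenvalue --- a basis-independent condition --- and hence transfers automatically to any other upper triangularizing basis. You instead verify the claim by a direct flag computation: writing $u_k = \alpha_k v_k + w_k$ with $\alpha_k \neq 0$ and $w_k \in \mathcal{F}_{k-1}$, you check that $Tu_k - cu_k \in \mathcal{F}_{k-1}$, which identifies the $(k,k)$ entry in the new basis as $c$. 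Both arguments are valid and elementary; the paper's is shorter and more conceptual (it is the same observation that powers condition (v) of Theorem \ref{tfaeex1}), while yours is self-contained, makes explicit that the diagonal constant $c(T)$ is literally unchanged rather than merely ``still constant,'' and never invokes the spectral interpretation of the diagonal. Your parenthetical remark about conjugation by an upper triangular change-of-basis matrix is a third correct packaging of the same fact.
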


\begin{proof}
Suppose $(v_1, \ldots, v_n)$ is an ordered basis with respect to which the
matrix of every element of $\mathcal{A}$ is upper triangular. This means that
$Av_i \in {\rm span}\{v_1, \ldots, v_i\}$ for all $A \in \mathcal{A}$
and $1 \leq i \leq n$, or, to put it differently, for every $i$ the subspace
${\rm span}\{v_1, \ldots, v_i\}$ is invariant for $\mathcal{A}$. Applying
the Gram-Schmidt orthonormalization procedure to this basis produces an
ordered orthonormal basis $(w_1, \ldots, w_n)$ with the property that
${\rm span}\{w_1, \ldots, w_i\} = {\rm span}\{v_1, \ldots, v_i\}$ for all
$1 \leq i \leq n$. So each ${\rm span}\{w_1, \ldots, w_i\}$ is invariant
for $\mathcal{A}$, and this shows that every element of $\mathcal{A}$
has an upper triangular matrix for the $(w_i)$ basis.

The second assertion follows from the fact that the main diagonal entries
of an upper triangular matrix are precisely its eigenvalues. So if the matrix
of $A \in \mathcal{A}$ with respect to the $(v_i)$ basis is upper
triangular and constant on its main diagonal, then $A$ has only one eigenvalue
and so its matrix with respect to the $(w_i)$ basis will also be
constant on its main diagonal.
\end{proof}

The final theorem of this section shows that in finite dimensions, operator
algebras typically contain plenty of nonorthogonal projections (with the
exceptions described below in Theorem \ref{tfaeex1}). As stated,
it applies to block diagonal matrices in which each block is upper triangular
and constant on its main diagonal --- the type just treated in Proposition
\ref{orthout}. Pictorially, these are matrices of the form shown in Figure
\ref{jordanesque}.
\begin{figure}[ht]\label{jordanesque}
$$\setlength{\arraycolsep}{0pt}
\left[\begin{matrix}
\begingroup
\setlength{\arraycolsep}{4pt}
\begin{array}{ccccc|}
\lambda_1&&&&\\
&\cdot&&\hspace*{.1in}\smash{\makebox[\wd0]{\HUGE $*$}}\hspace{-.1in}&\\
&&\cdot&&\\
&&&\cdot&\\
&&&&\lambda_1\\
\hline
\end{array}\endgroup &&&\\
&\begingroup
\setlength{\arraycolsep}{4pt}
\begin{array}{|ccccc|}
\hline
\lambda_2&&&&\\
&\cdot&&\hspace*{.1in}\smash{\makebox[\wd0]{\HUGE $*$}}\hspace{-.1in}&\\
&&\cdot&&\\
&&&\cdot&\\
&&&&\lambda_2\\
\hline
\end{array}\endgroup &&\\
&&\begingroup
\setlength{\arraycolsep}{4pt}
\begin{array}{ccccc}
\cdot&&&&\\
&\cdot&&&\\
&&\cdot&&\\
&&&\cdot&\\
&&&&\cdot\\
\end{array}\endgroup &\\
&&&\begingroup
\setlength{\arraycolsep}{4pt}
\begin{array}{|ccccc}
\hline
\lambda_k&&&&\\
&\cdot&&\hspace*{.1in}\smash{\makebox[\wd0]{\HUGE $*$}}\hspace{-.1in}&\\
&&\cdot&&\\
&&&\cdot&\\
&&&&\lambda_k\\
\end{array}\endgroup \\
\end{matrix}\right]$$
\caption{A Jordanesque matrix}
\end{figure}
Jordan matrices are particular examples of matrices of this type.
Therefore I will call any matrix of the above form ``Jordanesque''. Let
us record this in a definition.

\begin{defi}\label{jordef}
A block diagonal matrix in $\widetilde{M}_n$ in which each block is upper
triangular and constant on its main diagonal is {\it Jordanesque}. If the
sizes of the blocks are $n_1, \ldots, n_k$ (so that $n_1 + \cdots + n_k = n$)
then we may also say the matrix is {\it $(n_1, \ldots, n_k)$-Jordanesque}.
\end{defi}

Any linear operator on $\mathbb{C}^n$ has a matrix in Jordan form relative
to some ordered basis of $\mathbb{C}^n$. But this basis need not be orthogonal,
so it is important that the preceding definition applies to
$\widetilde{M}_n$, which has no preferred involution.

It is straightforward to check that the sum and product of any two 
$(n_1, \ldots, n_k)$-Jordanesque matrices are again
$(n_1, \ldots, n_k)$-Jordanesque. Thus the set of all
$(n_1, \ldots, n_k)$-Jordanesque matrices is a unital operator algebra.

The crux of the next proof is the simple fact that if $A \in \widetilde{M}_n$
is strictly upper triangular (i.e., upper triangular and zero on the main
diagonal) then $A^n = 0$.

\begin{theo}\label{findproj}
Let $A \in \widetilde{M}_n$ be Jordanesque and let $\lambda$ be one of its
diagonal entries (i.e., one of its eigenvalues). Assume $\lambda \neq 0$.
Then the algebra generated by $A$ contains the diagonal matrix whose $(i,i)$
entry is $1$ if $a_{ii} = \lambda$ and is $0$ otherwise, where $A = (a_{ij})$.
\end{theo}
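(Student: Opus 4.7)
The plan is to exhibit a polynomial $p$ with $p(0) = 0$ (so that $p(A)$ lies in the non-unital algebra generated by $A$) such that $p(A) = D_\lambda$, the target diagonal matrix. First I would decompose $A$ block-by-block using its Jordanesque structure: writing $A = A_1 \oplus \cdots \oplus A_k$ with $A_j = \lambda_j I_{n_j} + N_j$ and $N_j$ strictly upper triangular, the crux cited just above the theorem yields $N_j^{n_j} = 0$, hence $N_j^n = 0$. Thus for any polynomial $p$ the Taylor expansion terminates,
$$p(A_j) = \sum_{m=0}^{n-1} \frac{p^{(m)}(\lambda_j)}{m!} N_j^m,$$
and $p(A) = p(A_1) \oplus \cdots \oplus p(A_k)$.

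Reading off this formula, $p(A_j) = I_{n_j}$ exactly when $p(\lambda_j) = 1$ and $p^{(m)}(\lambda_j) = 0$ for $m = 1, \ldots, n - 1$, while $p(A_j) = 0$ exactly when $p^{(m)}(\lambda_j) = 0$ for $m = 0, 1, \ldots, n - 1$. Letting $\mu_1, \ldots, \mu_s$ denote the distinct eigenvalues of $A$, I therefore seek a polynomial $p$ with $p \equiv 1 \pmod{(x - \lambda)^n}$, with $p \equiv 0 \pmod{(x - \mu_i)^n}$ for every $\mu_i \neq \lambda$, and with $p(0) = 0$.

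This is where the hypothesis $\lambda \neq 0$ enters essentially: were $\lambda$ equal to $0$, the demands $p(\lambda) = 1$ and $p(0) = 0$ would be contradictory. Since $\lambda \neq 0$, however, standard Hermite interpolation (equivalently the Chinese remainder theorem applied to $\mathbb{C}[x]$ with the pairwise coprime factors $(x - \lambda)^n$, the $(x - \mu_i)^n$ for $\mu_i \neq \lambda$, and, if $0$ is not already an eigenvalue, also $(x)$) produces a polynomial $p$ satisfying all the prescribed congruences simultaneously. Then $p(A) = D_\lambda$ by the block-wise computation, and $p(0) = 0$ ensures $D_\lambda$ lies in the algebra generated by $A$. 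The only nontrivial ingredient is the Hermite/CRT step, which is entirely standard; no deeper obstacle arises.
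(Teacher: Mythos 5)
Your proof is correct. It reaches the same destination as the paper's --- a polynomial $p$ with $p(0)=0$ and $p(A)$ equal to the target diagonal matrix --- but the execution is genuinely different. The paper builds its polynomial by hand in two stages: first it forms the product $(A^2-\mu_1 A)^n\cdots(A^2-\mu_k A)^n$ over the nonzero eigenvalues $\mu_i\neq\lambda$ (each factor $A^2-\mu A = A(A-\mu I)$ is strictly upper triangular on the $\mu$-blocks and carries a factor of $A$ that annihilates the nilpotent blocks after raising to the $n$th power), obtaining a matrix supported on the $\lambda$-blocks with constant nonzero diagonal; then it kills the residual strictly upper triangular part by the telescoping trick $A_0 B = AB - B$ and an alternating sum. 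You instead impose all the needed conditions at once --- $p\equiv 1 \pmod{(x-\lambda)^n}$, $p\equiv 0\pmod{(x-\mu_i)^n}$, $p(0)=0$ --- and invoke Hermite interpolation / CRT, with the derivative conditions $p^{(m)}(\lambda)=0$ for $1\le m\le n-1$ doing the work of the paper's second stage. You also correctly isolate where $\lambda\neq 0$ is used (compatibility of $p(\lambda)=1$ with $p(0)=0$) and correctly handle the case where $0$ is or is not an eigenvalue. What the paper's route buys is complete explicitness and self-containment; what yours buys is brevity and transparency about exactly which congruence conditions are needed. One trivial quibble: your ``exactly when'' characterizations of $p(A_j)=I_{n_j}$ and $p(A_j)=0$ are really only ``if'' statements (the $N_j^m$ need not be linearly independent), but sufficiency is all your argument uses, so nothing is affected.
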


\begin{proof}
Let $\mathcal{A}$ be the algebra generated by $A$. For any nonzero eigenvalue
$\mu$ besides $\lambda$, the matrix $A^2 - \mu A \in \mathcal{A}$ has zero
diagonal on every block where $A$ has diagonal entries $\mu$, but its
diagonal entries on every block where $A$ has diagonal entries $\lambda$
are nonzero. The restriction of $A^2 - \mu A$ to any of the former blocks
is strictly upper triangular, so that $(A^2 - \mu A)^n$ is zero on all of these
blocks. Thus, if $\mu_1, \ldots, \mu_k$ are the nonzero eigenvalues of $A$
besides $\lambda$, then $(A^2 - \mu_1 A)^n \cdots (A^2 - \mu_k A)^n \in
\mathcal{A}$ is nonzero only on blocks where $A$ has diagonal entries
$\lambda$, and on those blocks its diagonal entries all take the nonzero
value $\lambda ' = \lambda^{kn}(\lambda - \mu_1)^n\cdots(\lambda - \mu_k)^n$.

Multiplying by $\frac{1}{\lambda'}$ and restricting to the nonzero blocks,
we reduce to the case where $A$ is upper triangular and its diagonal entries
are all $1$. We must now show that $I \in \mathcal{A}$. Write $A = I + A_0$
where $A_0$ is strictly upper triangular. For any $B \in \mathcal{A}$ we
have $A_0B = AB - B \in \mathcal{A}$. So inductively $A_0 + A_0^2$,
$A_0^2 + A_0^3$, $\ldots$, $A_0^{n-1} + A_0^n = A_0^{n-1}$ all belong to
$\mathcal{A}$. Then the alternating sum
$$(I + A_0) - (A_0 + A_0^2) + \cdots \pm (A_0^{n-2} + A_0^{n-1})
\mp A_0^{n-1} = I$$
belongs to $\mathcal{A}$, as desired.
\end{proof}

\begin{coro}\label{bigdiag}
If $\mathcal{A} \subseteq M_n$ is an operator algebra all of whose elements
are Jordanesque, then we have $\mathcal{A} = \mathcal{A}_{\rm diag} +
\mathcal{A}_{\rm nil}$ where $\mathcal{A}_{\rm diag}$ is the algebra of
diagonal matrices in $\mathcal{A}$ and $\mathcal{A}_{\rm nil}$ is the ideal
of strictly upper triangular (i.e., nilpotent) matrices in $\mathcal{A}$.
\end{coro}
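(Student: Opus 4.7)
The plan is to show that for any $A \in \mathcal{A}$, its diagonal part $D_A$ (i.e., the diagonal matrix whose $(i,i)$ entry is $a_{ii}$) already lies in $\mathcal{A}$. Once this is established, the decomposition is immediate: writing $A = D_A + (A - D_A)$, we note that $A - D_A$ is also in $\mathcal{A}$ and, being Jordanesque with all diagonal entries equal to zero on each block, is strictly upper triangular (hence nilpotent). So every element of $\mathcal{A}$ is a sum of a diagonal element of $\mathcal{A}$ and a strictly upper triangular element of $\mathcal{A}$, giving $\mathcal{A} = \mathcal{A}_{\rm diag} + \mathcal{A}_{\rm nil}$. Checking that $\mathcal{A}_{\rm nil}$ is an ideal is routine: the product of a Jordanesque matrix with a strictly upper triangular Jordanesque matrix is again strictly upper triangular and Jordanesque, using the fact that the product of $(n_1,\ldots,n_k)$-Jordanesque matrices is $(n_1,\ldots,n_k)$-Jordanesque.

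To produce $D_A$ inside $\mathcal{A}$, I would enumerate the distinct \emph{nonzero} eigenvalues $\lambda_1, \ldots, \lambda_r$ of $A$. For each such $\lambda_s$, Theorem \ref{findproj} (applied to $A$, viewed as an element of the algebra it generates, which is contained in $\mathcal{A}$) produces a diagonal matrix $P_{\lambda_s} \in \mathcal{A}$ whose $(i,i)$ entry is $1$ when $a_{ii} = \lambda_s$ and $0$ otherwise. Then the element
\[
\sum_{s=1}^r \lambda_s P_{\lambda_s} \in \mathcal{A}
\]
agrees with $D_A$ at every index $i$: on the blocks where $A$ has a nonzero eigenvalue $\lambda_s$, the sum contributes $\lambda_s$ at the corresponding diagonal positions, matching $a_{ii}$; and on the blocks where $A$ has eigenvalue $0$, both the sum and the diagonal of $A$ are zero. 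Therefore this sum equals $D_A$, and so $D_A \in \mathcal{A}$.

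The main (and essentially only) obstacle is the one already overcome by Theorem \ref{findproj}, which allows us to extract the eigenvalue-$\lambda$ projection out of the algebra generated by a single Jordanesque matrix. The one subtlety to be careful about is that Theorem \ref{findproj} requires $\lambda \neq 0$, which is why the zero-eigenvalue blocks must be handled by observing that they contribute nothing to $D_A$ rather than by trying to extract a projection onto them.
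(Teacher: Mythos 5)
Your proposal is correct and follows essentially the same route as the paper: the paper's proof also applies Theorem \ref{findproj} to each nonzero eigenvalue of $A$ and takes a linear combination to obtain the diagonal part of $A$ inside $\mathcal{A}$, then subtracts to get the strictly upper triangular part. Your added remarks --- the explicit handling of the zero-eigenvalue blocks and the verification that $\mathcal{A}_{\rm nil}$ is an ideal --- are correct details that the paper leaves implicit.
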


\begin{proof}
Let $A \in \mathcal{A}$. Then applying Theorem \ref{findproj} to each nonzero
eigenvalue of $A$ and taking a linear combination, we get that the diagonal
matrix $B$ whose diagonal entries agree with those of $A$ belongs
to $\mathcal{A}$. Thus every matrix in $\mathcal{A}$ can be decomposed into
the sum of a diagonal matrix and a strictly upper triangular matrix, both
of which belong to $\mathcal{A}$. This shows that $\mathcal{A} =
\mathcal{A}_{\rm diag} + \mathcal{A}_{\rm nil}$.
\end{proof}

Since any linear operator on $\mathbb{C}^n$ can be put in Jordan form by a
suitable choice of (not necessarily orthogonal) basis, the preceding results
apply to any linear operator on $\mathbb{C}^n$ and the algebra it generates.

\section{Two basic examples}

There are two prototypical examples of hereditarily antisymmetric operator
algebras in $M_n$. We will see in Theorem \ref{mainthm} that any hereditarily
antisymmetric operator algebra in $M_n$ is a sort of combination of these two
types.

\begin{exam}\label{ex1}
Let $n \in \mathbb{N}$ and let $\mathcal{T}_n \subseteq M_n$ be the set of
upper triangular matrices which are constant on the main diagonal.
\begin{figure}[ht]\label{ex1fig}
$$\left[\begin{matrix}
\lambda&&&&\\
&\cdot&&\hspace*{.1in}\smash{\makebox[\wd0]{\HUGE $*$}}\hspace{-.1in}&\\
&&\cdot&&\\
&&&\cdot&\\
\hspace*{.1in}\smash{\makebox[\wd0]{\Huge $0$}}\hspace{-.1in}&&&&\lambda\\
\end{matrix}\right]$$
\caption{Upper triangular and constant on the main diagonal}
\end{figure}
This is a unital operator algebra, and it is clearly antisymmetric.
\end{exam}

In this example we are working in the standard basis of $\mathbb{C}^n$, which
is orthonormal. (That is, $\mathcal{T}_n$ is defined in terms of $M_n$, not
$\widetilde{M}_n$.) But the same definition could be made relative to any
ordered vector space basis of $\mathbb{C}^n$: let
$(v_1, \ldots, v_n)$ be any ordered basis
of $\mathbb{C}^n$ and let $\mathcal{A}$ be the set of operators whose matrices
relative to this basis are upper triangular and constant on the main diagonal.
However, as a consequence of Proposition \ref{orthout} this construction is no
more general than the orthonormal case. That result implies that this new
algebra $\mathcal{A}$ will be identical to the algebra $\mathcal{T}_n$ with
respect to some ordered orthonormal basis. So we have one example of this type
for each dimension $n$, and allowing nonorthonormal bases does not expand the
class of examples.

\begin{prop}\label{ex1maxha}
For each $n \in \mathbb{N}$ the operator algebra $\mathcal{T}_n$ of Example
\ref{ex1} is hereditarily antisymmetric, and every operator algebra that
properly contains it is not antisymmetric. In particular, it is a maximal
hereditarily antisymmetric operator algebra.
\end{prop}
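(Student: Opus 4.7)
The plan is to prove two things: $\mathcal{T}_n$ is hereditarily antisymmetric, and any operator algebra properly containing $\mathcal{T}_n$ is already not antisymmetric; together these yield the maximality claim. Starting with hereditary antisymmetry, I first identify the invariant subspaces of $\mathcal{T}_n$ as exactly the standard flag $V_k = \operatorname{span}\{e_1, \ldots, e_k\}$ for $0 \leq k \leq n$. One inclusion is immediate from upper triangularity; for the other, since each $E_{ij}$ with $i < j$ lies in $\mathcal{T}_n$, any invariant subspace must contain $v_j e_i = E_{ij} v$ for every $v$ in it, which quickly forces it to coincide with some $V_k$. Hence the semi-invariant subspaces are precisely the orthogonal differences $V_k \ominus V_j = \operatorname{span}\{e_{j+1}, \ldots, e_k\}$, and compressing $\mathcal{T}_n$ to such a subspace reproduces $\mathcal{T}_{k-j}$ on the smaller space. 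Each $\mathcal{T}_m$ is antisymmetric because any element of $\mathcal{T}_m \cap \mathcal{T}_m^*$ is simultaneously upper and lower triangular with constant diagonal, hence a scalar multiple of $I$.

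For maximality, given any $B \in \mathcal{A} \setminus \mathcal{T}_n$, I subtract its strictly upper triangular part (which lies in $\mathcal{T}_n$) to assume $B$ is lower triangular but still outside $\mathcal{T}_n$. I then split into three cases. (i) If $B$ is diagonal with non-constant diagonal, Theorem \ref{findproj} applied to $B$ (or a scalar shift of it, to guarantee a nonzero target eigenvalue) produces a nontrivial orthogonal projection in $\mathbb{C}[B] \subseteq \mathcal{A}$; Lemma \ref{linflemma} is what makes this step go through. (ii) If some $b_{pq} \neq 0$ with $p > q+1$, pick any $a$ with $q < a < p$: then both $E_{ap}$ and $E_{qa}$ lie in $\mathcal{T}_n$, and $E_{ap} B E_{qa} = b_{pq} E_{aa}$ exhibits the nontrivial orthogonal projection $E_{aa}$ in $\mathcal{A}$. (iii) Otherwise all nonzero below-diagonal entries of $B$ sit on the immediate subdiagonal; pick $p$ with $b_{p, p-1} \neq 0$ and expand $[E_{p-1, p}, B]$. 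Under the case hypothesis its only nonzero entries lie at $(p-1, p-1)$, $(p-1, p)$, and $(p, p)$, giving $b_{p, p-1}(E_{p-1, p-1} - E_{pp}) + (b_{pp} - b_{p-1, p-1}) E_{p-1, p}$; subtracting the strictly upper triangular last term leaves the non-scalar self-adjoint element $b_{p, p-1}(E_{p-1, p-1} - E_{pp}) \in \mathcal{A}$. In every case Proposition \ref{sainf} forces $\mathcal{A}$ to fail antisymmetry.

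The main obstacle is case (iii): the direct compression trick of case (ii) is unavailable when there is no intermediate index, and one must rely on the commutator calculation together with the crucial observation that, under the case-(iii) hypothesis, the off-diagonal remainder in $[E_{p-1, p}, B]$ collapses to a single strictly upper triangular correction at position $(p-1, p)$, which $\mathcal{T}_n$ can absorb. Verifying this collapse comes down to noting that $B_{p, j} = 0$ for $j < p-1$ and $B_{i, p-1} = 0$ for $i > p$, both immediate from the case hypothesis.
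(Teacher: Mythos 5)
Your proof is correct, and the first half (hereditary antisymmetry) is essentially the paper's argument: you identify the invariant subspaces as the standard flag (via the matrix units $E_{ij}$ rather than the shift $U$, an immaterial difference), conclude that the semi-invariant subspaces are the spans of consecutive basis vectors, and observe that the compressions are lower-dimensional copies of the same algebra. The maximality half, however, takes a genuinely different computational route. The paper handles a below-diagonal entry $a_{ij}\neq 0$ ($i>j$) in one stroke: choosing $i-j$ maximal, it multiplies by $U^{i-j}$ to shift that entry onto the main diagonal, producing an upper triangular matrix with non-constant diagonal and thereby reducing everything to the single diagonal case. You instead split into three cases after subtracting the strictly upper triangular part: the diagonal case (same as the paper's), a far-subdiagonal case where the sandwich $E_{ap}BE_{qa}=b_{pq}E_{aa}$ hands you a rank-one orthogonal projection outright, and the immediate-subdiagonal case, where the commutator identity
$$[E_{p-1,p},B]=b_{p,p-1}\bigl(E_{p-1,p-1}-E_{pp}\bigr)+(b_{pp}-b_{p-1,p-1})E_{p-1,p}$$
(valid precisely because the case hypothesis kills the rest of row $p$ and column $p-1$) yields a non-scalar self-adjoint element after absorbing the strictly upper triangular remainder into $\mathcal{T}_n$. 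Your version is more case-heavy but more concrete: cases (ii) and (iii) exhibit the offending self-adjoint element or projection explicitly, without routing through Theorem \ref{findproj} or Proposition \ref{tfaefd}, whereas the paper's shift trick is shorter and more uniform. Both are complete; I will only note the small imprecision that in case (i) it is Theorem \ref{findproj} (or alternatively Lemma \ref{linflemma} via Proposition \ref{tfaefd}) that finishes the job, not both at once, but either route is available and nothing is missing.
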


\begin{proof}
In order to check the first assertion, we must identify the semi-invariant
subspaces for $\mathcal{T}_n$. But $\mathcal{T}_n$ contains the shift matrix
\begin{figure}[ht]\label{shiftfig}
$$U = \left[\begin{matrix}
0&1&&\raisebox{-.11in}{\hspace*{.1in}\smash{\makebox[\wd0]{\Huge $0$}}\hspace{-.1in}}&\\
&0&1&&\\
&&\ddots&\ddots&\\
&&&0&1\\
\hspace*{.1in}\smash{\makebox[\wd0]{\Huge $0$}}\hspace{-.1in}&&&&0
\end{matrix}\right]$$
\caption{The shift matrix}
\end{figure}

\noindent whose only invariant subspaces are $\{0\}$, ${\rm span}\{e_1\}$,
${\rm span}\{e_1, e_2\}$, $\ldots$, $\mathbb{C}^n$. So these are the only
possible invariant subspaces for $\mathcal{T}_n$. Conversely, it is easy to
see that each of these subspaces is invariant for any operator in
$\mathcal{T}_n$. So these are precisely the invariant subspaces for
$\mathcal{T}_n$, and the nonzero semi-invariant
subspaces are therefore those of the form ${\rm span}\{e_i, \ldots, e_j\}$
for $i \leq j$. The compression of $\mathcal{T}_n$ to any such subspace is
simply a lower dimensional version of $\mathcal{T}_n$, and hence is
antisymmetric. So $\mathcal{T}_n$ is hereditarily antisymmetric.

Next, we show that $\mathcal{T}_n$ is maximal antisymmetric. Suppose
$\mathcal{A} \subseteq M_n$ is an operator algebra that properly
contains $\mathcal{T}_n$ and let $A \in \mathcal{A} \setminus \mathcal{T}_n$.
There are two cases to consider. First, suppose $A$ is upper triangular
but not constant on the main diagonal. By subtracting an element of
$\mathcal{T}_n$, we may assume that $A$ has no entries above the main
diagonal, i.e., it is a diagonal matrix. But then it is normal but not
a scalar multiple of the identity, which shows that $\mathcal{A}$ is not
antisymmetric by Proposition \ref{tfaefd}.

In the second case $A$ has some nonzero entry $a_{ij}$ with $i > j$. Choose
such a pair $(i,j)$ with $i - j$ maximal. Then $AU^{i-j}$ is upper triangular,
its $(0,0)$ entry is $0$, and its $(i,i)$ entry is $a_{ij}$. (The matrix
$U$ was introduced in the first part of this proof.) So we reduce to
the first case. This completes the proof that $\mathcal{T}_n$ is maximal
antisymmetric.

Since there is no larger antisymmetric algebra in $M_n$, there is certainly
no larger hereditarily antisymmetric algebra in $M_n$. Thus $\mathcal{T}_n$ is
also maximal hereditarily antisymmetric.
\end{proof}

The operator algebras $\mathcal{T}_n$ are quite special. They can be
characterized in several alternative ways. Recall that Burnside's
theorem on matrix algebras states that any operator algebra properly
contained in $M_n$ has a nontrivial invariant subspace.

\begin{theo}\label{tfaeex1}
Let $\mathcal{A} \subseteq M_n$ be an operator algebra.
The following are equivalent:

{\narrower{
\noindent (i) $\mathcal{A}$ is contained in the algebra $\mathcal{T}_n$
relative to some ordered orthonormal basis

\noindent (ii) every similar operator algebra (i.e., every
$S\mathcal{A}S^{-1}$ for $S \in M_n$ invertible) is antisymmetric

\noindent (iii) $\mathcal{A}$ is antisymmetric with repect to any inner
product on $\mathbb{C}^n$

\noindent (iv) $\mathcal{A}$ contains no projections besides $0$ and
(possibly) $I$

\noindent (v) every operator in $\mathcal{A}$ has exactly one eigenvalue.

}}
\end{theo}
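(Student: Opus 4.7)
The plan is to establish the cycle $(i)\Rightarrow(ii)\Rightarrow(iii)\Rightarrow(iv)\Rightarrow(v)\Rightarrow(i)$. Implication $(i)\Rightarrow(ii)$ is immediate: conjugation preserves eigenvalues, so every element of $S\mathcal{A}S^{-1}$ inherits the one-eigenvalue property from its similar counterpart in $\mathcal{T}_n$, and a self-adjoint operator with only one eigenvalue is diagonalizable and therefore scalar, so by Proposition~\ref{tfaefd} the similar algebra is antisymmetric. For $(ii)\Rightarrow(iii)$, I would use that every inner product on $\mathbb{C}^n$ can be written as $\langle u,v\rangle_T=\langle Tu,Tv\rangle$ for some invertible $T$, and that $A$ is self-adjoint with respect to $\langle\cdot,\cdot\rangle_T$ if and only if $TAT^{-1}$ is self-adjoint in the standard sense; hence antisymmetry of $\mathcal{A}$ under $\langle\cdot,\cdot\rangle_T$ is equivalent to standard antisymmetry of $T\mathcal{A}T^{-1}$, which $(ii)$ provides.

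For $(iii)\Rightarrow(iv)$, suppose $\mathcal{A}$ contained a projection $P\notin\{0,I\}$. Then $\mathbb{C}^n = \mathrm{Ran}(P) \oplus \ker(P)$, and equipping $\mathbb{C}^n$ with any inner product in which these two subspaces are orthogonal makes $P$ an orthogonal (hence self-adjoint) non-scalar projection, contradicting antisymmetry under that inner product. For $(iv)\Rightarrow(v)$, if some $A\in\mathcal{A}$ had two distinct eigenvalues then the Chinese Remainder Theorem applied to the minimal polynomial of $A$ would produce a polynomial $p$ with $p(A)$ equal to a nontrivial spectral projection of $A$. One can arrange $p(0)=0$ without disturbing $p(A)$: if the chosen spectral eigenvalue is different from $0$ and $0$ is another eigenvalue, the CRT condition on the $0$-primary factor already forces $p(0)=0$; otherwise one adjusts $p$ by a scalar multiple of the minimal polynomial, which vanishes at $A$. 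Hence $p(A)\in\mathcal{A}$ would be a nontrivial projection, contradicting $(iv)$.

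The main work is $(v)\Rightarrow(i)$, which I would prove by induction on $n$. The case $n=1$ is immediate. For $n\geq 2$, first observe that $\mathcal{A}$ and $\mathcal{A}+\mathbb{C}\cdot I$ have the same invariant subspaces. If neither had a nontrivial invariant subspace, Burnside's theorem would give $\mathcal{A}+\mathbb{C}\cdot I = M_n$; picking any matrix with two distinct eigenvalues, such as $E_{11}$, and writing it as $A+\alpha I$, the element $A\in\mathcal{A}$ would still have two distinct eigenvalues, contradicting $(v)$. So $\mathcal{A}$ has a nontrivial invariant subspace $\mathcal{E}$. With respect to the orthogonal decomposition $\mathbb{C}^n = \mathcal{E}\oplus\mathcal{E}^\perp$ every element of $\mathcal{A}$ is block upper triangular. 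Both $\mathcal{E}$ and $\mathcal{E}^\perp$ are semi-invariant (one being invariant, the other being the orthocomplement of an invariant subspace), so the compressions $P_\mathcal{E}\mathcal{A}P_\mathcal{E}$ and $P_{\mathcal{E}^\perp}\mathcal{A}P_{\mathcal{E}^\perp}$ are operator algebras. The eigenvalues of the two diagonal blocks of any $A$ are a subset of those of $A$, hence both compressed algebras again satisfy $(v)$. The inductive hypothesis yields ordered orthonormal bases of $\mathcal{E}$ and $\mathcal{E}^\perp$ placing the compressions inside $\mathcal{T}_{\dim\mathcal{E}}$ and $\mathcal{T}_{\dim\mathcal{E}^\perp}$; concatenating these bases gives an ordered orthonormal basis of $\mathbb{C}^n$ in which every $A\in\mathcal{A}$ is upper triangular. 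Both diagonal blocks of $A$ are constant and equal to its unique eigenvalue, so the full main diagonal is constant, giving $\mathcal{A}\subseteq\mathcal{T}_n$ in this basis.

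The main obstacle is the inductive step for $(v)\Rightarrow(i)$: one must extract an invariant subspace despite not assuming $\mathcal{A}$ is unital (hence the passage to $\mathcal{A}+\mathbb{C}\cdot I$ and Burnside), and then confirm that the two compressed diagonals agree so as to glue into a single constant.
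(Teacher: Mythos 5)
Your proof is correct and follows the same cycle $(i)\Rightarrow(ii)\Rightarrow(iii)\Rightarrow(iv)\Rightarrow(v)\Rightarrow(i)$ as the paper, with the same Burnside induction carrying the hard implication $(v)\Rightarrow(i)$; two of the easier implications are handled by different but equally valid means. For $(i)\Rightarrow(ii)$ you argue via eigenvalues --- every conjugate of an element of $\mathcal{T}_n$ still has exactly one eigenvalue, and a self-adjoint matrix with one eigenvalue is scalar --- whereas the paper observes via Proposition \ref{orthout} that $S\mathcal{T}_nS^{-1}$ is again a $\mathcal{T}_n$ in another orthonormal basis; your version is slightly more direct and needs less machinery. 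For $(iv)\Rightarrow(v)$ the paper simply cites Theorem \ref{findproj}; you instead construct the nontrivial spectral projection by hand as $p(A)$ for an interpolating polynomial obtained from the Chinese Remainder Theorem, and you correctly attend to the one point that matters for a possibly nonunital $\mathcal{A}$, namely arranging $p(0)=0$ (automatic from the $0$-primary congruence when $0$ is an eigenvalue and the target eigenvalue is chosen nonzero, otherwise achievable by subtracting a scalar multiple of the minimal polynomial, which annihilates $A$). This is a self-contained replacement for Theorem \ref{findproj} in this context. Your additional care in $(v)\Rightarrow(i)$ about nonunitality --- passing to $\mathcal{A}+\mathbb{C}\cdot I$, which has the same invariant subspaces, before invoking Burnside --- is harmless but not needed, since Burnside's theorem as used in the paper applies to any subalgebra properly contained in $M_n$, unital or not; the gluing of the two constant diagonal blocks into a single constant diagonal via the unique eigenvalue of $A$ matches the paper's argument.
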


\begin{proof}
(i) $\Rightarrow$ (ii): Let $S \in M_n$ be invertible; we must show that
$S\mathcal{T}_nS^{-1}$ is antisymmetric. Now $S\mathcal{T}_nS^{-1}$
consists of the operators whose matrices relative to the $(Se_i)$ basis
are upper triangular and constant on the main diagonal. According to
Proposition \ref{orthout} this means that $S\mathcal{T}_nS^{-1}$ is just
another $\mathcal{T}_n$ with respect to some other orthonormal basis.
This shows that there are no non-scalar
self-adjoint operators in $S\mathcal{T}_nS^{-1}$.

(ii) $\Rightarrow$ (iii): Assume every operator algebra similar to
$\mathcal{A}$ is antisymmetric and let $\{\cdot,\cdot\}$ be a new inner
product on $\mathbb{C}^n$. Then there is an invertible positive operator $T$
with the property that $\langle Tv,w\rangle = \{v,w\}$ for all
$v,w \in \mathbb{C}^n$. Let $S = T^{-1/2}$. Denoting the adjoint operation
relative to $\{\cdot,\cdot\}$ by $A^\dag$, we then have, for any $v$ and $w$,
$$\langle TA^\dag v, w\rangle = \{A^\dag v, w\} = \{v,Aw\}
= \langle Tv,Aw\rangle = \langle A^*Tv,w\rangle.$$
Thus
$$A^\dag = T^{-1}A^*T = S^2A^*S^{-2} = S(S^{-1}AS)^*S^{-1},$$
and hence $S^{-1}A^\dag S = (S^{-1}AS)^*$. So $A^\dag = A \in \mathcal{A}$
implies
$S^{-1}AS = (S^{-1}AS)^*$, which by hypothesis implies that $S^{-1}AS$ is a
scalar multiple of the identity, which implies that $A$ is a scalar multiple
of the identity. Thus by Proposition \ref{sainf} $\mathcal{A}$ is
antisymmetric relative to $\{\cdot,\cdot\}$.

(iii) $\Rightarrow$ (iv): Suppose $\mathcal{A}$ contains a projection $P$
which is neither $0$ nor $I$. Find bases of ${\rm ker}(P)$ and ${\rm ran}(P)$;
together these form a basis for $\mathbb{C}^n$ which, if taken to be
orthonormal, makes $P$ an orthogonal projection. So relative to the
inner product which makes the chosen basis orthonormal, $\mathcal{A}$
contains a non-scalar self-adjoint operator, i.e., it is not antisymmetric.

(iv) $\Rightarrow$ (v): If $\mathcal{A}$ contains an operator with more than
one eigenvalue, then it contains a non-scalar projection by Theorem
\ref{findproj}.

(v) $\Rightarrow$ (i): Suppose every operator in $\mathcal{A}$ has exactly
one eigenvalue. We prove that there is an ordered orthonormal basis relative
to which every operator in $\mathcal{A}$ is upper triangular. The fact that
every operator in $\mathcal{A}$ has exactly one eigenvalue then implies that
these upper triangular matrices are all constant on the main diagonal.

The proof goes by induction on $n$, where $\mathcal{A} \subseteq M_n$. The
assertion is trivial for $n = 1$.
For $n > 1$, let $\mathcal{A}$ be such an algebra and observe that every
subquotient of $\mathcal{A}$ has the same property that every
operator in it has only one eigenvalue. (If $\mathcal{H} =
\mathcal{F}_1 \oplus \mathcal{E} \oplus \mathcal{F}_2$ is an orthogonal
decomposition such that $\mathcal{F}_1$ and $\mathcal{F}_1 \oplus \mathcal{E}$
are invariant for $A$, then the eigenvalues of $A$ are the eigenvalues of its
compressions to $\mathcal{E}$, $\mathcal{F}_1$, and $\mathcal{F}_2$; see
Figure \ref{sifig}.) By Burnside's theorem, $\mathcal{A}$ has a nontrivial
invariant subspace $\mathcal{E}$. Then the induction hypothesis applies to
the compressions of $\mathcal{A}$ to $\mathcal{E}$ and $\mathcal{E}^\perp$,
so we can find ordered
orthonormal bases of $\mathcal{E}$ and $\mathcal{E}^\perp$
such that the matrix of the compression of any operator in $\mathcal{A}$
to either $\mathcal{E}$ or $\mathcal{E}^\perp$ is upper triangular. Then the
matrix of any operator in $\mathcal{A}$ has the same property relative to the
concatenation of these two bases. This proves (i).
\end{proof}

The proof of (v) $\Rightarrow$ (i) appears in a more general form in Theorem
\ref{utthm} below.

Since any subquotient of $\mathcal{A}$ would inherit property (v), the
following corollary is immediate.

\begin{coro}\label{tfaecor}
Every subalgebra of $\mathcal{T}_n$ is hereditarily antisymmetric.
\end{coro}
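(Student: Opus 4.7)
The plan is to use the equivalence proved in Theorem \ref{tfaeex1}, specifically the fact that condition (v)---every operator has exactly one eigenvalue---is preserved under passage to subquotients, and then invoke (v) $\Rightarrow$ (i) on the subquotient to conclude antisymmetry.

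First I would observe that any subalgebra $\mathcal{A} \subseteq \mathcal{T}_n$ automatically satisfies condition (v) of Theorem \ref{tfaeex1}, since every upper triangular matrix constant on the main diagonal has only one eigenvalue (namely, the common diagonal value). To show $\mathcal{A}$ is hereditarily antisymmetric, I must show that an arbitrary subquotient $P\mathcal{A}P$ (with $P$ the orthogonal projection onto a semi-invariant subspace $\mathcal{E}$) is antisymmetric.

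The key step is to transfer property (v) from $\mathcal{A}$ to $P\mathcal{A}P$. Fix any $A \in \mathcal{A}$ and let $\mathbb{C}^n = \mathcal{F}_1 \oplus \mathcal{E} \oplus \mathcal{F}_2$ be the orthogonal decomposition witnessing that $\mathcal{E}$ is semi-invariant, so that $\mathcal{F}_1$ and $\mathcal{F}_1 \oplus \mathcal{E}$ are both invariant for $\mathcal{A}$. With respect to this decomposition the matrix of $A$ is block upper triangular, as in Figure \ref{sifig}, so its eigenvalues are exactly the union (with multiplicity) of the eigenvalues of the three diagonal blocks---including the middle block, which is $PAP$ viewed as an operator on $\mathcal{E}$. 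Since $A$ has only one eigenvalue, $PAP$ can have only one eigenvalue as well. Thus $P\mathcal{A}P \subseteq \mathcal{B}(\mathcal{E})$ satisfies condition (v) of Theorem \ref{tfaeex1}.

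Applying the equivalence (v) $\Rightarrow$ (i) of Theorem \ref{tfaeex1} to the operator algebra $P\mathcal{A}P$ yields that $P\mathcal{A}P$ is contained in some $\mathcal{T}_k$, and in particular it is antisymmetric. Since the subquotient was arbitrary, $\mathcal{A}$ is hereditarily antisymmetric.

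I do not foresee a genuine obstacle here: the whole content is packaged in Theorem \ref{tfaeex1} and the elementary block-triangular observation about eigenvalues of semi-invariant compressions. The only point that needs a moment's care is making explicit that the eigenvalues of the middle block appear among the eigenvalues of $A$, which follows immediately from the block upper triangular form.
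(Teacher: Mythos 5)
Your proof is correct and is essentially the paper's own argument: the paper derives the corollary immediately from the observation (made in the proof of (v) $\Rightarrow$ (i) of Theorem \ref{tfaeex1}) that the single-eigenvalue property passes to subquotients via the block upper triangular form. Your explicit verification of that eigenvalue inheritance and the final appeal to (v) $\Rightarrow$ (i) match the intended reasoning exactly.
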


Now let us turn to the second prototypical class of examples.

\begin{defi}\label{exam2}
Let ${\bf v} = \{v_1, \ldots, v_n\}$ be a basis for $\mathbb{C}^n$ and
define $\mathcal{D}_{\bf v} \subseteq M_n$ to be the set of operators
for which each $v_i$ is an eigenvalue. Equivalently, these are the
operators whose matrix for the ${\bf v}$ basis is diagonal.
\end{defi}

If the ${\bf v}$ basis is orthogonal then $\mathcal{D}_{\bf v}$ clearly
contains many self-adjoint operators. In order to make $\mathcal{D}_{\bf v}$
antisymmetric we need some amount of nonorthogonality. Define the
{\it nonorthogonality graph associated to ${\bf v}$} to have vertices
$\{1, \ldots, n\}$ and an edge between $i$ and $j$ if $\langle v_i,v_j\rangle
\neq 0$. Say that ${\bf v}$ is {\it anti-orthogonal} if, for any
$1 \leq i < j \leq n$ and
$X \subseteq \{1, \ldots, \hat{i}, \ldots, \hat{j}, \ldots, n\}$
(i.e., $\{1, \ldots, n\}$ with $i$ and $j$ omitted)
we have $\langle Pv_i,v_j\rangle \neq 0$ where $P$ is the orthogonal projection
onto the orthocomplement of ${\rm span}\{v_k: k \in X\}$. (In particular,
taking $X = \emptyset$ yields $\langle v_i,v_j\rangle \neq 0$ for all
$i$ and $j$, i.e., the nonorthogonality graph is complete.)

\begin{theo}\label{ex2thm}
Let ${\bf v}$ be a basis for $\mathbb{C}^n$, $n > 1$. Then
$\mathcal{D}_{\bf v}$ is antisymmetric if and only if the nonorthogonality
graph associated to ${\bf v}$ is connected. It is hereditarily antisymmetric
if and only if ${\bf v}$ is anti-orthogonal.
\end{theo}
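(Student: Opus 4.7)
The plan is to treat the two equivalences in turn, using Proposition \ref{tfaefd} to convert antisymmetry into the nonexistence of nontrivial orthogonal projections, and using Proposition \ref{compprop} to identify the compressions that arise as subquotients.

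For the first equivalence, observe that $\mathcal{D}_{\bf v}$ is commutative and isomorphic to $l^\infty_n$, each operator corresponding to its eigenvalue sequence. Its idempotents are therefore exactly the operators $Q_S = \sum_{i \in S} P_i$ for $S \subseteq \{1,\ldots,n\}$, where $P_i$ is the projection onto $\mathbb{C}v_i$ along $\mathrm{span}\{v_j : j \neq i\}$. The range of $Q_S$ is $V_S := \mathrm{span}\{v_i : i \in S\}$ and its kernel is $V_{S^c}$, so $Q_S$ is orthogonal if and only if $V_S \perp V_{S^c}$, equivalently $\langle v_i, v_j\rangle = 0$ for every $i \in S$ and $j \in S^c$, equivalently no edge of the nonorthogonality graph crosses the partition $(S, S^c)$. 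By Proposition \ref{tfaefd}(iii), antisymmetry of $\mathcal{D}_{\bf v}$ is equivalent to the absence of such a nontrivial $S$, which is precisely connectedness of the graph.

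For the second equivalence, I first identify the (semi-)invariant subspaces. Since $\mathcal{D}_{\bf v}$ contains each $P_i$, and $P_i$ sends $\sum_k c_k v_k$ to $c_i v_i$, any invariant subspace is closed under the coordinate projections in the $\bf v$ basis and so must be of the form $V_S$. Consequently the semi-invariant subspaces for $\mathcal{D}_{\bf v}$ are exactly the subspaces $\mathcal{E} = V_{S_1} \cap V_{S_2}^\perp$ with $S_2 \subseteq S_1$; write $Y = S_1 \setminus S_2$. To compute $P\mathcal{D}_{\bf v}P$, I use Proposition \ref{compprop} with the companion subspace $\mathcal{F} = V_Y$: on $V_Y$ the action is manifestly diagonal in the basis $\{v_i : i \in Y\}$, so $Q\mathcal{D}_{\bf v}Q$ is the full diagonal algebra there, and transporting under $\Phi(T) = P_0 T P_0^{-1}$ shows that $P\mathcal{D}_{\bf v}P = \mathcal{D}_{\bf w}$ on $\mathcal{E}$, where $\mathbf{w} = \{Pv_i : i \in Y\}$ is a basis of $\mathcal{E}$. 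Moreover, since $v_i \in V_{S_1}$ for $i \in Y$, the vector $Pv_i$ coincides with the orthogonal projection of $v_i$ onto $V_{S_2}^\perp$; hence $\langle Pv_i, Pv_j\rangle = \langle P_{V_{S_2}^\perp} v_i, v_j\rangle$ for $i, j \in Y$.

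Combining these, I apply the first part of the theorem to the subquotient $\mathcal{D}_{\bf w}$: it is antisymmetric iff its nonorthogonality graph is connected. For the forward direction, hereditary antisymmetry applied to the two-element case $Y = \{i, j\}$, with $S_2 = X$ ranging over subsets not containing $i$ or $j$, forces $\langle P_{V_X^\perp} v_i, v_j\rangle \neq 0$, which is exactly the anti-orthogonality condition. Conversely, if $\bf v$ is anti-orthogonal then for every subquotient with $|Y| \geq 2$ and every pair $i, j \in Y$ we get $\langle Pv_i, Pv_j\rangle \neq 0$, so the graph of $\bf w$ is complete, hence connected, making $\mathcal{D}_{\bf w}$ antisymmetric; subquotients with $|Y| \leq 1$ are trivially antisymmetric. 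I expect the main technical step to be the identification $P\mathcal{D}_{\bf v}P = \mathcal{D}_{\bf w}$: the isomorphism from Proposition \ref{compprop} is not an isometry and so does not by itself preserve antisymmetry, but here it serves as a calculational device that transports the manifest diagonalization on the companion $V_Y$ to $\mathcal{E}$, after which all antisymmetry questions reduce, via the first part, to graph connectedness.
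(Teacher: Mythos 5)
Your proposal is correct and follows essentially the same route as the paper: identify the invariant subspaces as the coordinate subspaces $V_S$, use a companion subspace and Proposition \ref{compprop} to see that every subquotient is again a diagonal algebra $\mathcal{D}_{\bf w}$ for the projected basis, and reduce hereditary antisymmetry to connectedness of the projected nonorthogonality graphs via the first part, with the two-element case giving exactly the anti-orthogonality condition. The only cosmetic difference is in the first equivalence, where you classify the idempotents of $\mathcal{D}_{\bf v} \cong l^\infty_n$ and invoke Proposition \ref{tfaefd}(iii), whereas the paper checks self-adjoint elements directly by an eigenvalue computation; both are equally elementary and equally valid.
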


\begin{proof}
Suppose the nonorthogonality graph associated to ${\bf v}$ is disconnected.
Then there is a nonempty proper subset $X \subset \{1, \ldots, n\}$ such
that $\langle v_i,v_j\rangle = 0$ for any $i \in X$ and $j \not\in X$. The
operator $A$ which satisfies $Av_i = v_i$ for all $i \in X$ and $Av_j = 0$
for all $j \not\in X$ is then a non-scalar orthogonal projection which belongs
to $\mathcal{D}_{\bf v}$. So $\mathcal{D}_{\bf v}$ is not antisymmetric.

Conversely, suppose the nonorthogonality graph associated to ${\bf v}$ is
connected. Let $A \in \mathcal{D}_{\bf v}$ and suppose $A = A^*$. Write
$Av_i = \lambda_iv_i$. Then for any $i$ we have
$$\lambda_i\|v_i\|^2 = \langle Av_i,v_i\rangle = \langle v_i,Av_i\rangle
= \bar{\lambda}_i\|v_i\|^2,$$
so each $\lambda_i$ is real. For any $i$ and $j$ we then have
$$\lambda_i\langle v_i,v_j\rangle = \langle Av_i,v_j\rangle
= \langle v_i, Av_j\rangle = \lambda_j\langle v_i,v_j\rangle,$$
so that $\langle v_i,v_j\rangle \neq 0$ implies $\lambda_i = \lambda_j$.
Since the nonorthogonality graph is connected, this makes every
$\lambda_i$ take the same value $\lambda$, so that $A = \lambda I$. We
have shown that the only self-adjoint elements of $\mathcal{D}_{\bf v}$ are
scalar multiples of the identity, so $\mathcal{D}_{\bf v}$ is antisymmetric.

Now suppose ${\bf v}$ is anti-orthogonal. It is easy to see that the invariant
subspaces for $\mathcal{D}_{\bf v}$ are precisely the subspaces of the form
${\rm span}\{v_i: i \in X\}$ for some $X \subseteq \{1, \ldots, n\}$. Thus the
semi-invariant subspaces are all the orthogonal differences of subspaces of
this form. The structure of the compression of $\mathcal{D}_{\bf v}$ to the
subspace ${\rm span}\{v_i: i \in X\} \ominus {\rm span}\{v_i: i \in Y\}$,
with $X \subset Y$, is clear because ${\rm span}\{v_i: i \in Y\setminus X\}$
is a companion subspace on which $\mathcal{D}_{\bf v}$ acts diagonally.
Thus, the compression of $\mathcal{D}_{\bf v}$ to any semi-invariant
subspace will be a lower dimensional version of $\mathcal{D}_{\bf v}$, and
by anti-orthogonality for a basis whose nonorthogonality graph is complete.
So the compression is antisymmetric by the first part of the theorem. We
have shown that $\mathcal{D}_{\bf v}$ is hereditarily antisymmetric.

Conversely, suppose ${\bf v}$ is not anti-orthogonal. Then there exist
$i$, $j$, and $X \subseteq \{1, \ldots, \hat{i}, \ldots, \hat{j}, \ldots, n\}$
such that $\langle Pv_i,v_j\rangle = 0$, where $P$ is the orthogonal
projection onto the orthocomplement of ${\rm span}\{v_k: k \in X\}$. Then
the orthogonal difference $\mathcal{E}$ between ${\rm span}\{v_k:
k \in X \cup \{i,j\}\}$ and ${\rm span}\{v_k: k \in X\}$ is a two-dimensional
semi-invariant subspace which has ${\rm span}\{v_i,v_j\}$ as a companion
subspace. The compression of $\mathcal{D}_{\bf v}$ to this subspace is then
just the set of $2\times 2$ matrices which are diagonal with respect to the
basis $\{Pv_i, Pv_j\}$ of $\mathbb{C}^2$, which is orthogonal. So this
compression is not antisymmetric, and therefore $\mathcal{D}_{\bf v}$ is not
hereditarily antisymmetric.
\end{proof}

This result shows that there are antisymmetric operator algebras which are
not hereditarily antisymmetric.

\begin{exam}
Let ${\bf v} = \{v_1, v_2, v_3\}$ be a basis for $\mathbb{C}^3$ with the
property that $\langle v_1, v_2\rangle$ and $\langle v_2,v_3\rangle$ are
nonzero but $\langle v_1,v_3\rangle = 0$. Then ${\bf v}$ has a connected
nonorthogonality graph but is not anti-orthogonal, so $\mathcal{D}_{\bf v}$
is antisymmetric but not hereditarily antisymmetric.
\end{exam}

If ${\bf v}$ is anti-orthogonal then $\mathcal{D}_{\bf v}$ is not only
hereditarily antisymmetric, it is maximal for this property. In order to
prove this, we first need to characterize the operator algebras which contain
$\mathcal{D}_{\bf v}$. Fortunately, this characterization is easy and explicit.
The construction was already seen in Example \ref{preorderex}. Given any basis
${\bf v}$ of $\mathbb{C}^n$, let $E_{ij}$ be the operator whose matrix for
the ${\bf v}$ basis has a $1$ in the $(i,j)$ entry and $0$'s elsewhere.
(So $E_{ij}v_j = v_i$ and $E_{ij}v_k = 0$ for $k \neq j$.) 

\begin{theo}\label{precthm}
Let ${\bf v} = \{v_1, \ldots, v_n\}$ be a basis for $\mathbb{C}^n$. For
any preorder $\preceq$ on $\{1, \ldots, n\}$, the set
$$\mathcal{A}_{\preceq} = {\rm span}\{E_{ij}: i \preceq j\}$$
is an operator algebra which contains $\mathcal{D}_{\bf v} =
{\rm span}\{E_{11}, \ldots, E_{nn}\}$. Conversely, every operator algebra in
$M_n$ which contains $\mathcal{D}_{\bf v}$ has this form.
\end{theo}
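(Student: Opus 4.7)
The plan is a direct two-step argument based on the simple calculus of the matrix units $E_{ij}$ in the ${\bf v}$ basis, for which $E_{ij}E_{kl} = \delta_{jk}E_{il}$.

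For the forward direction I would first note that for any preorder $\preceq$, reflexivity gives $E_{ii} \in \mathcal{A}_{\preceq}$ for all $i$, so $\mathcal{D}_{\bf v} = \mathrm{span}\{E_{11},\ldots,E_{nn}\} \subseteq \mathcal{A}_{\preceq}$. To check that $\mathcal{A}_{\preceq}$ is closed under products, it suffices to verify it on spanning elements: $E_{ij}E_{kl}$ is either $0$ or equals $E_{il}$, in which case $i \preceq j = k \preceq l$ forces $i \preceq l$ by transitivity, hence $E_{il} \in \mathcal{A}_{\preceq}$.

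For the converse, suppose $\mathcal{A} \subseteq M_n$ is an operator algebra containing $\mathcal{D}_{\bf v}$. Define a relation on $\{1,\ldots,n\}$ by declaring $i \preceq j$ iff $E_{ij} \in \mathcal{A}$. Reflexivity is immediate since $E_{ii} \in \mathcal{D}_{\bf v} \subseteq \mathcal{A}$, and transitivity follows from $E_{ij}E_{jk} = E_{ik}$ together with closure of $\mathcal{A}$ under multiplication. By construction $\mathcal{A}_{\preceq} \subseteq \mathcal{A}$, so the only thing left is the reverse inclusion.

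The key step — and really the whole content of the proof — is to extract individual matrix-unit components from an arbitrary $A \in \mathcal{A}$. Writing $A = \sum_{i,j} a_{ij}E_{ij}$ in the ${\bf v}$ basis, one computes directly that $E_{ii}AE_{jj} = a_{ij}E_{ij}$. Since $E_{ii}, E_{jj} \in \mathcal{D}_{\bf v} \subseteq \mathcal{A}$ and $\mathcal{A}$ is an algebra, this product lies in $\mathcal{A}$; thus whenever $a_{ij} \neq 0$ we get $E_{ij} = a_{ij}^{-1}E_{ii}AE_{jj} \in \mathcal{A}$, which by definition means $i \preceq j$. So every nonzero term in the expansion of $A$ already lies in $\mathcal{A}_{\preceq}$, proving $A \in \mathcal{A}_{\preceq}$. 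I do not expect any serious obstacle; the only mild subtlety is remembering that all matrix expressions are relative to the (possibly nonorthogonal) basis ${\bf v}$, so the ``compression'' $E_{ii}AE_{jj}$ is an algebraic operation in $\widetilde{M}_n$ rather than a Hilbert space compression.
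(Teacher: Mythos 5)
Your proof is correct and follows essentially the same route as the paper's: the forward direction is the same routine verification, and the converse hinges on the identical key computation $E_{ii}AE_{jj} = a_{ij}E_{ij} \in \mathcal{A}$, which extracts the matrix-unit components and shows $\mathcal{A}$ is spanned by the $E_{ij}$ it contains, with reflexivity and transitivity checked exactly as in the paper.
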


\begin{proof}
The fact that $\mathcal{A}_{\preceq}$ is an operator algebra is a simple
verification. For the second assertion, let $\mathcal{A} \subseteq M_n$ be
an operator algebra which contains $\mathcal{D}_{\bf v}$. For any
$A \in \mathcal{A}$ we can write $A = \sum a_{ij}E_{ij}$, where $A = (a_{ij})$.
Since $a_{ij}E_{ij} = E_{ii}AE_{jj} \in \mathcal{A}$, it follows that
$\mathcal{A}$ equals ${\rm span}\{E_{ij}: (i,j) \in X\}$ for some subset
$X$ of $\{1, \ldots, n\}^2$, i.e., some relation on $\{1, \ldots, n\}$.
Since each $E_{ii}$ belongs to $\mathcal{D}_{\bf v} \subseteq \mathcal{A}$,
the pairs $(i,i)$ all belong to $X$, i.e., $X$ is reflexive. It is transitive
because $E_{ij}E_{jk} = E_{ik}$. Therefore it is a preorder.
\end{proof}

\begin{theo}
Suppose ${\bf v} = \{v_1, \ldots, v_n\}$ is an anti-orthogonal basis for
$\mathbb{C}^n$. Then $\mathcal{D}_{\bf v}$ is a maximal hereditarily
antisymmetric operator algebra.
\end{theo}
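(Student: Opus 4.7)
The strategy is to show that any operator algebra $\mathcal{A}$ properly containing $\mathcal{D}_{\mathbf{v}}$ admits a subquotient which is not antisymmetric, so that $\mathcal{A}$ itself is not hereditarily antisymmetric. By Theorem~\ref{precthm}, such an $\mathcal{A}$ has the form $\mathcal{A}_{\preceq}$ for some preorder $\preceq$ on $\{1,\ldots,n\}$ which strictly refines equality. The plan is to use Proposition~\ref{compprop} together with the dimensional criterion of Proposition~\ref{dimcount}: I will locate a convex subset $X$ of $\{1,\ldots,n\}$ with $|X|\geq 2$, compress $\mathcal{A}_\preceq$ to the resulting semi-invariant subspace $\mathcal{E}$, and show the compression is large enough to fail antisymmetry.

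Define an equivalence $\sim$ by $i \sim j \iff i \preceq j$ and $j \preceq i$, and split into cases. In Case~1 some $\sim$-equivalence class $C$ has $|C|\geq 2$. I would check that $C$ is convex (if $i\preceq k\preceq j$ with $i,j\in C$ then $i\sim j$ forces $k\sim i$), that $L_1=\{k:k\preceq c\text{ for some }c\in C\}$ and $L_2=L_1\setminus C$ are both lower sets, and that $\mathcal{F}=\text{span}\{v_k:k\in C\}$ is a companion subspace for the semi-invariant $\mathcal{E}=\mathcal{E}_1\ominus\mathcal{E}_2$, where $\mathcal{E}_i=\text{span}\{v_k:k\in L_i\}$. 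Since every $E_{pq}$ with $p,q\in C$ lies in $\mathcal{A}_\preceq$, a short computation shows that $Q\mathcal{A}_\preceq Q|_{\mathcal{F}}$ is all of $M_{|C|}$; by Proposition~\ref{compprop} the algebra $P\mathcal{A}_\preceq P$ has dimension $|C|^2$ in $B(\mathcal{E})\cong M_{|C|}$, so it equals $B(\mathcal{E})$, which is not antisymmetric for $|C|\geq 2$.

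In Case~2 all $\sim$-classes are singletons, so $\preceq$ is a nontrivial finite partial order and admits a covering pair $i\prec j$ (no $k \neq i,j$ satisfies $i \preceq k \preceq j$). Then $\{i,j\}$ is convex, and an analogous construction yields a 2-dimensional semi-invariant $\mathcal{E}$ with companion $\mathcal{F}=\text{span}\{v_i,v_j\}$. A direct calculation, using that $E_{ii}, E_{jj}, E_{ij}\in\mathcal{A}_\preceq$ but $E_{ji}\notin\mathcal{A}_\preceq$, shows that in the $(v_i,v_j)$ basis $Q\mathcal{A}_\preceq Q|_{\mathcal{F}}$ is the 3-dimensional algebra of upper triangular $2\times 2$ matrices. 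Proposition~\ref{compprop} then gives $\dim P\mathcal{A}_\preceq P=3=\frac{2^2}{2}+1$, and Proposition~\ref{dimcount} implies it is not antisymmetric.

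The main obstacle I anticipate is the interaction between the algebraic isomorphism $\Phi$ of Proposition~\ref{compprop} and the self-adjoint structure: $\Phi$ need not preserve adjoints, so one cannot directly transport a self-adjoint element from $Q\mathcal{A}_\preceq Q$ over to $P\mathcal{A}_\preceq P$. The point is that $\Phi$ preserves linear dimension, which lets Proposition~\ref{dimcount} detect non-antisymmetry intrinsically inside $B(\mathcal{E})$, while the computations with $Q$ serve only to count how large the compression is. The remaining work is combinatorial bookkeeping with lower sets, convex sets, and the explicit form of $\mathcal{A}_\preceq$.
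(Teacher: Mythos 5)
Your proposal is correct, but it takes a genuinely different route from the paper's. Both arguments begin the same way, using Theorem \ref{precthm} to reduce to algebras $\mathcal{A}_{\preceq}$ for a preorder with at least one nontrivial comparability. From there the paper gives a single, case-free construction: it takes any $j$ whose principal lower set $X=\{i: i\preceq j\}$ has at least two elements, works inside the \emph{invariant} subspace $\mathcal{E}={\rm span}\{v_i: i\in X\}$, picks a nonzero $v\in\mathcal{E}$ orthogonal to ${\rm span}\{v_i: i\in X\setminus\{j\}\}$ with $v=\sum a_iv_i$, and observes that $A=\sum_{i\in X}a_iE_{ij}\in\mathcal{A}_{\preceq}$ compresses on $\mathcal{E}$ to a nonzero multiple of the rank-one orthogonal projection onto ${\rm span}\{v\}$ --- an explicit non-scalar self-adjoint element of a subobject. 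You instead split on whether $\sim$ has a nontrivial class, manufacture a semi-invariant subspace from a convex set (an equivalence class, or a covering pair), and detect failure of antisymmetry purely by counting dimensions through the companion-subspace isomorphism of Proposition \ref{compprop} and then invoking Proposition \ref{dimcount}. I checked your combinatorial claims ($L_1$ and $L_1\setminus C$ are lower sets; $\{i,j\}$ is convex for a covering pair; the compressions $QE_{pq}Q|_{\mathcal F}$ give all of $M_{|C|}$ in Case 1 and the three independent upper triangular matrix units in Case 2) and they all hold, and you correctly flag and resolve the one real subtlety, namely that $\Phi$ is not a $*$-map, so only the dimension can be transported. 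What each approach buys: the paper's is shorter, avoids the case split, and treats preorders and partial orders uniformly; yours avoids having to exhibit an explicit self-adjoint operator, reuses the dimension-counting mechanism the paper itself employs in the proof of Theorem \ref{mainthm}, and in Case 1 shows the stronger fact that the subquotient is full, which connects to Theorem \ref{utthm}.
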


\begin{proof}
Let $\preceq$ be a preorder on $\{1, \ldots, n\}$ and let
$\mathcal{A}_{\preceq}$ be as in Theorem \ref{precthm}. If
$\mathcal{A}_{\preceq}$ properly contains $\mathcal{D}_{\bf v}$ then
there must be at least one pair of distinct elements which are comparable.
Thus, there must exist $1 \leq j \leq n$ for which $X = \{i: i \preceq j\}$
contains at least one number besides $j$. Let $\mathcal{E} =
{\rm span}\{e_i: i \in X\}$; this is an invariant subspace for
$\mathcal{A}_{\preceq}$.

Find a nonzero vector $v \in \mathcal{E}$ which is orthogonal to
${\rm span}\{e_i: i \in X \setminus \{j\}\}$. Write
$v = \sum_{i \in X} a_ie_i$ and consider the operator
$A = \sum_{i \in X} a_iE_{ij} \in \mathcal{A}_{\preceq}$, where $E_{ij}$
is as above. We have $Ae_i = 0$ for all $i \in X \setminus \{j\}$ and
$Av = a_jv$. Thus the compression of $A$ to $\mathcal{E}$ is a nonzero
scalar multiple of the orthogonal
projection onto ${\rm span}\{v\}$, and this shows that $\mathcal{A}_{\preceq}$
is not hereditarily antisymmetric. Since every operator algebra properly
containing $\mathcal{D}_{\bf v}$ has this form, the latter must be maximal
hereditarily antisymmetric.
\end{proof}

The algebras $\mathcal{T}_n$ were maximal hereditarily antisymmetric merely
by virtue of being maximal antisymmetric (Proposition \ref{ex1maxha}). This
is not the case for the algebras $\mathcal{D}_{\bf v}$.

\begin{prop}
Let ${\bf v} = \{v_1, \ldots, v_n\}$ be a basis of $\mathbb{C}^n$ whose
nonorthogonality graph is connected and suppose $n > 2$. Then
$\mathcal{D}_{\bf v}$ is not maximal antisymmetric.
\end{prop}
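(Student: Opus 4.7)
By Theorem \ref{precthm}, every operator algebra containing $\mathcal{D}_{\bf v}$ has the form $\mathcal{A}_\preceq$ for a preorder $\preceq$ on $\{1,\ldots,n\}$. My approach is to enlarge $\mathcal{D}_{\bf v}$ by a single off-diagonal element: for distinct $i,j$, consider $\mathcal{A}_{i,j} := \mathcal{D}_{\bf v} + \mathbb{C} E_{ij}$, the algebra corresponding to the partial order whose only nontrivial comparability is $i \prec j$. I will find a pair $(i,j)$ making $\mathcal{A}_{i,j}$ antisymmetric. A generic element is $A = \sum_k \lambda_k E_{kk} + \beta E_{ij}$; writing $A = A^*$ as the matrix identity $KM = M^*K$ in the ${\bf v}$-basis --- where $M$ is the matrix of $A$ and $K_{ab} = \langle v_b, v_a\rangle$ is the associated Gram matrix --- yields three families of equations: (i) $\lambda_a \in \mathbb{R}$ for $a \ne j$; (ii) $\lambda_a = \lambda_b$ whenever $a,b \ne j$ are distinct with $\langle v_a, v_b\rangle \ne 0$; and (iii) the distinguished relation $(\lambda_i - \lambda_j)\langle v_j, v_i\rangle = \beta \|v_i\|^2$.

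If the nonorthogonality graph is not complete, I pick $i \ne j$ with $\langle v_i, v_j\rangle = 0$. Then (iii) forces $\beta = 0$, so $A \in \mathcal{D}_{\bf v}$; and since the connectedness hypothesis makes $\mathcal{D}_{\bf v}$ antisymmetric by Theorem \ref{ex2thm}, $A$ is a scalar multiple of $I$ and we are done. Otherwise the nonorthogonality graph is complete, so its restriction to $\{1,\ldots,n\}\setminus\{j\}$ is also complete, and (i)--(ii) force all $\lambda_a$ with $a \ne j$ to agree with a common real number $\mu$. Writing (iii) at both $(i,j)$ and $(i',j)$ for a third index $i' \ne i,j$ and eliminating $\beta$ gives
\[(\mu - \lambda_j)\bigl(\|v_i\|^2\langle v_j, v_{i'}\rangle - \langle v_j, v_i\rangle\langle v_i, v_{i'}\rangle\bigr) = 0.\]
If some distinct triple $(i,j,i')$ makes the bracketed quantity nonzero, then $\mu = \lambda_j$, hence $\beta = 0$; the $(j,j)$-equation then forces $\lambda_j$ real, so $A = \mu I$.

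The main obstacle is therefore the \emph{key claim}: for $n > 2$ and complete nonorthogonality graph there exist distinct $i,j,i'$ with $\|v_i\|^2\langle v_j, v_{i'}\rangle \ne \langle v_j, v_i\rangle\langle v_i, v_{i'}\rangle$. I plan to prove this by contradiction. Suppose equality holds for every distinct triple, fix $j$, and set $c_i := \langle v_j, v_i\rangle / \|v_i\|^2$ for $i \ne j$. The equality rewrites (using linearity of $\langle\cdot,\cdot\rangle$ in the first slot) as $\langle v_j - c_i v_i, v_{i'}\rangle = 0$, and combined with $\langle v_j - c_i v_i, v_i\rangle = 0$ (automatic from the definition of $c_i$) this shows that the nonzero vector $v_j - c_i v_i$ lies in the one-dimensional orthocomplement $L_j$ of $\mathrm{span}\{v_k : k \ne j\}$. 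Taking two distinct indices $i_1, i_2 \ne j$ --- available because $n > 2$ --- the difference $(v_j - c_{i_1}v_{i_1}) - (v_j - c_{i_2}v_{i_2}) = c_{i_2}v_{i_2} - c_{i_1}v_{i_1}$ lies in both $L_j$ and $L_j^\perp = \mathrm{span}\{v_k : k \ne j\}$, so it vanishes; linear independence of $v_{i_1}, v_{i_2}$ forces $c_{i_1} = c_{i_2} = 0$. Since $i_1, i_2$ range over all distinct pairs in $\{1,\ldots,n\}\setminus\{j\}$, we conclude $\langle v_j, v_i\rangle = 0$ for every $i \ne j$, contradicting completeness.
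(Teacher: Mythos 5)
Your proof is correct, and its overall architecture coincides with the paper's: adjoin a single matrix unit $E_{ij}$ to form $\mathcal{A}_{i,j} = \mathcal{D}_{\bf v} + \mathbb{C}\cdot E_{ij}$, derive the self-adjointness relations, and split into the case where some $\langle v_i,v_j\rangle = 0$ (where your argument is essentially identical to the paper's) and the case where the nonorthogonality graph is complete. Where you genuinely diverge is in the complete case. The paper normalizes the basis vectors to unit length, chooses $(i,j)$ to \emph{minimize} $|\langle v_i,v_j\rangle|$, and reaches a contradiction from the inequality $|\langle v_k,v_i\rangle| \geq 1$ via Cauchy--Schwarz applied to linearly independent unit vectors. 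You instead isolate a clean existence statement --- some distinct triple satisfies $\|v_i\|^2\langle v_j,v_{i'}\rangle \neq \langle v_j,v_i\rangle\langle v_i,v_{i'}\rangle$ --- and prove it by observing that failure for all $i'$ forces the nonzero vector $v_j - c_iv_i$ into the one-dimensional orthocomplement of ${\rm span}\{v_k : k \neq j\}$, after which comparing two such vectors kills the coefficients $c_i$ and contradicts completeness. Your route buys two small things: it avoids any normalization of the basis, and it exposes the geometric meaning of the obstruction (the bracketed quantity vanishes for all $i'$ precisely when $v_j$ differs from a multiple of $v_i$ by a vector orthogonal to all the other basis vectors), so it identifies explicitly which pairs $(i,j)$ admit an antisymmetric enlargement rather than relying on an extremal choice. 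The paper's version is slightly shorter. Both arguments use $n > 2$ in the same place, to supply the third index.
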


\begin{proof}
First suppose $\langle v_i,v_j\rangle = 0$ for some $i$ and $j$. Then
$\mathcal{A} = \mathcal{D}_{\bf v} + \mathbb{C}\cdot E_{ij}$ is an algebra
which properly contains $\mathcal{D}_{\bf v}$.
Let $A \in \mathcal{A}$ be self-adjoint. Writing
$A = \lambda_1E_{11} + \cdots + \lambda_nE_{nn} + \alpha E_{ij}$, we have
$$\langle Av_i,v_j\rangle = \lambda_i\langle v_i,v_j\rangle = 0$$
and
$$\langle v_i,Av_j\rangle = \langle v_i, \lambda_j v_j + \alpha v_i\rangle
= \bar{\alpha}\|v_i\|^2,$$
and equating these expressions yields $\alpha = 0$. So
$A \in \mathcal{D}_{\bf v}$, which we know is antisymmetric by Theorem
\ref{ex2thm}, and therefore $A$ must be a scalar multiple of the identity.
So in this case $\mathcal{D}_{\bf v}$ is not maximal antisymmetric.

Otherwise, if no two basis vectors are orthogonal, normalize the basis vectors
so that $\|v_i\| = 1$ for all $i$, and then fix $i$ and $j$ which minimize
$|\langle v_i,v_j\rangle|$. Again let $\mathcal{A} = \mathcal{D}_{\bf v} +
\mathbb{C}\cdot E_{ij}$. Suppose $A \in \mathcal{A}$ is self-adjoint, and
again write $A = \lambda_1E_{11} + \cdots + \lambda_nE_{nn} + \alpha E_{ij}$.
Assume $\alpha \neq 0$, aiming for a contradiction.

By self-adjointness we have
$$\lambda_k = \langle Av_k, v_k\rangle = \langle v_k, Av_k\rangle
= \bar{\lambda}_k$$
for any $k \neq j$, showing that $\lambda_k$ is real for all $k \neq j$.
Then for any $k,l \neq j$
$$\lambda_k\langle v_k,v_l\rangle = \langle Av_k,v_l\rangle
= \langle v_k,Av_l\rangle = \lambda_l\langle v_k,v_l\rangle,$$
and since every $\langle v_k,v_l\rangle$ is nonzero this shows that
$\lambda_k = \lambda_l$. So there is a single value $\lambda$ such that
$\lambda_k = \lambda$ for all $k \neq j$.

We therefore have
$$\lambda\langle v_k,v_j\rangle = \langle Av_k,v_j\rangle
= \langle v_k,Av_j\rangle = \bar{\lambda}_j\langle v_k,v_j\rangle
+ \bar{\alpha}\langle v_k,v_i\rangle$$
for all $k \neq j$. That is,
$$(\lambda - \bar{\lambda}_j)\langle v_k,v_j\rangle =
\bar{\alpha}\langle v_k,v_i\rangle,$$
or more briefly, $\langle v_k,v_i\rangle = \beta\langle v_k,v_j\rangle$
where $\beta = \frac{\lambda - \bar{\lambda}_j}{\bar{\alpha}}$.
With $k = i$ this yields
$$1 = |\beta||\langle v_i,v_j\rangle|,$$
and then minimality of $|\langle v_i,v_j\rangle|$ yields
$$|\langle v_k,v_i\rangle| = |\beta||\langle v_k,v_j\rangle|
\geq |\beta||\langle v_i,v_j\rangle| = 1$$
for all $k$ not equal to $i$ or $j$. (There is at least one such $k$;
this is where we use $n > 2$.) But this is absurd since $\|v_k\| = \|v_i\|
= 1$ and the two vectors are linearly independent. The contradiction shows
that we must have $\alpha = 0$, and thus
$A$ belongs to $\mathcal{D}_{\bf v}$ and hence it must be a scalar multiple
of the identity by Theorem \ref{ex2thm}. This completes the proof that
$\mathcal{A}$ is antisymmetric, so we conclude that $\mathcal{D}_{\bf v}$
can always be strictly enlarged to an algebra which is still antisymmetric.
\end{proof}

If $n = 2$ then $\mathcal{D}_{\bf v}$ is maximal antisymmetric for any
nonorthogonal basis ${\bf v} = \{v_1,v_2\}$ of $\mathbb{C}^2$. This is
because its dimension is $2$, and we know from Proposition \ref{dimcount}
that no operator algebra contained in $M_2$ whose dimension is at least $3$
can be antisymmetric.

One can ask, for which preorders $\preceq$ is the algebra
$\mathcal{A}_{\preceq}$ antisymmetric? It seems natural to conjecture that
in order for this to be the case, $\preceq$ would have to be a genuine partial
order. However, this is not correct.

\begin{exam}\label{fullsubex}
Let ${\bf v} = \{v_1, v_2, v_3, v_4\}$ be a basis for $\mathbb{C}^4$
satisfying $\langle v_i, v_j\rangle \neq 0$ for all $i$ and $j$,
and assume the additional condition that
no nonzero vector in $\mathcal{E} = {\rm span}\{v_1, v_2\}$ is orthogonal to
every vector in $\mathcal{F} = {\rm span}\{v_3, v_4\}$, i.e., we have
$\mathcal{E} \cap \mathcal{F}^\perp = \{0\}$. Define $\mathcal{A}$ to be
$\mathcal{D}_{\bf v} + {\rm span}\{E_{12}, E_{21}\}$. This is the algebra
$\mathcal{A}_{\preceq}$ from Theorem \ref{precthm} for the
preorder which sets $1 < 2$ and $2 < 1$, with no other comparability.

$\mathcal{A}$ is antisymmetric. To see this, suppose $A = \lambda_1E_{11} +
\lambda_2E_{22} + \lambda_3E_{33} + \lambda_4E_{44} + \alpha E_{12} +
\beta E_{21}$ is self-adjoint. As usual, the computation
$$\lambda_4\|v_4\|^2 = \langle Av_4, v_4\rangle = \langle v_4, Av_4\rangle =
\bar{\lambda}_4\|v_4\|^2$$
implies that $\lambda_4$ is real and then
$$\lambda_3\langle v_3, v_4\rangle = \langle Av_3, v_4\rangle =
\langle v_3, Av_4\rangle = \lambda_4\langle v_3, v_4\rangle$$
shows that $\lambda_3 = \lambda_4$. Let $B = A - \lambda_4I$; we must show
that $B = 0$.

But this is easy because $B$ is self-adjoint and therefore
${\rm ran}(B) = {\rm ker}(B)^\perp$. The kernel of $B$ contains $\mathcal{F}$,
so this shows that its range is contained in $\mathcal{F}^\perp$. But
it is also contained in $\mathcal{E}$. The hypothesis that
$\mathcal{E} \cap \mathcal{F}^\perp = \{0\}$ then implies that $B = 0$.
\end{exam}

At the same time, $\preceq$ can be a partial order without
$\mathcal{A}_\preceq$ being antisymmetric. For instance, if $\preceq$ is
a linear order then $\mathcal{A}_\preceq$ consists of the upper triangular
operators for some ordering of the ${\bf v}$ basis, and it cannot be
antisymmetric because its dimension is too large (Proposition \ref{dimcount}).

\section{Finite dimensional structure analysis}

We are ready to discuss the general structure of hereditarily antisymmetric
operator algebras in $M_n$. We start with the fact that they can always be
upper triangularized. This will be an immediate corollary of the following
theorem, which characterizes the operator algebras in finite dimensions
which can be upper triangularized.

Recall that a subquotient of an operator algebra $\mathcal{A}
\subseteq M_n$ is an operator algebra of the form
$P\mathcal{A}P \subseteq B(\mathcal{E})$ where $P$ is the orthogonal
projection onto a semi-invariant subspace $\mathcal{E}$. Say that this
subquotient is {\it full} if it equals $B(\mathcal{E})$. 

\begin{theo}\label{utthm}
Let $\mathcal{A} \subseteq M_n$ be an operator algebra. The following are
equivalent:

{\narrower{
\noindent (i) there is an ordered orthonormal basis of $\mathbb{C}^n$ with
respect to which every operator in $\mathcal{A}$ has an upper triangular matrix

\noindent (ii) there is an ordered vector space basis of $\mathbb{C}^n$ with
respect to which every operator in $\mathcal{A}$ has an upper triangular matrix

\noindent (iii) $\mathcal{A}$ has no full subquotients of dimension
greater than $1$.

}}
\end{theo}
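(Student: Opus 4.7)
The plan is to establish (i) $\Leftrightarrow$ (ii) directly from what is already known and then prove (iii) $\Rightarrow$ (i) and (i) $\Rightarrow$ (iii) separately. The implication (i) $\Rightarrow$ (ii) is trivial, and (ii) $\Rightarrow$ (i) is exactly Proposition \ref{orthout}.

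For (iii) $\Rightarrow$ (i), I would argue by induction on $n$, closely following the template of the proof of (v) $\Rightarrow$ (i) in Theorem \ref{tfaeex1}. The base case $n = 1$ is trivial. For $n > 1$, first observe that $\mathcal{A} = I\mathcal{A}I$ is itself a subquotient of $\mathcal{A}$ on the semi-invariant subspace $\mathbb{C}^n$, so if $\mathcal{A} = M_n$ it would be a full subquotient of dimension greater than $1$, contradicting (iii). Hence $\mathcal{A}$ is a proper subalgebra of $M_n$, and Burnside's theorem produces a nontrivial invariant subspace $\mathcal{E}$. Both $\mathcal{E}$ and $\mathcal{E}^\perp = \mathbb{C}^n \ominus \mathcal{E}$ are semi-invariant, so $\mathcal{A}_1 = P\mathcal{A}P$ and $\mathcal{A}_2 = P^\perp \mathcal{A} P^\perp$ are operator algebras. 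Using Proposition \ref{compofcomp} together with the identity $Q\mathcal{A}_1 Q = Q\mathcal{A}Q$ for orthogonal projections $Q \leq P$, any full subquotient of $\mathcal{A}_1$ is also a full subquotient of $\mathcal{A}$; so $\mathcal{A}_1$ (and likewise $\mathcal{A}_2$) inherits condition (iii). By the induction hypothesis each is triangularized by an ordered orthonormal basis of its ambient space, and concatenating these (with the basis for $\mathcal{E}$ first) triangularizes $\mathcal{A}$, since the invariance of $\mathcal{E}$ guarantees a zero lower-left block.

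For (i) $\Rightarrow$ (iii), I would start with the invariant flag $\mathcal{H}_i = \mathrm{span}\{w_1, \ldots, w_i\}$ coming from an orthonormal triangulating basis $(w_1, \ldots, w_n)$, pick an arbitrary semi-invariant subspace $\mathcal{E} = \mathcal{E}_1 \ominus \mathcal{E}_2$ with $\dim \mathcal{E} \geq 2$, and introduce the subspaces
$$\mathcal{L}_i = \bigl((\mathcal{E}_1 \cap \mathcal{H}_i) + \mathcal{E}_2\bigr) \cap \mathcal{E}_2^\perp \subseteq \mathcal{E}.$$
A direct check shows that each $\mathcal{L}_i$ is invariant for $P\mathcal{A}P$ (since $\mathcal{E}_1$, $\mathcal{E}_2$, and $\mathcal{H}_i$ are all invariant for $\mathcal{A}$, so too are their sums and intersections), that $\mathcal{L}_0 = \{0\}$ and $\mathcal{L}_n = \mathcal{E}$, and that consecutive dimensions differ by $0$ or $1$ (because $\dim(\mathcal{E}_1 \cap \mathcal{H}_{i+1}) - \dim(\mathcal{E}_1 \cap \mathcal{H}_i) \in \{0,1\}$). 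Consequently some $\mathcal{L}_i$ is a nontrivial proper invariant subspace of $\mathcal{E}$ for $P\mathcal{A}P$, which forces $P\mathcal{A}P \neq B(\mathcal{E})$.

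I expect the main obstacle to lie in the direction (i) $\Rightarrow$ (iii): the invariant flag of $\mathbb{C}^n$ furnished by the triangulation must be transferred to a nontrivial invariant flag living \emph{inside} the semi-invariant subspace $\mathcal{E}$, not merely ambient to it. Once the formula for $\mathcal{L}_i$ is correctly set up, the invariance verification and the dimension-counting argument are routine; but finding the right expression requires being careful to work modulo $\mathcal{E}_2$ rather than in $\mathcal{E}$ directly.
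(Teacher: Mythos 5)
Your proof is correct, and while (i) $\Leftrightarrow$ (ii) and (iii) $\Rightarrow$ (i) follow the paper essentially verbatim (Proposition \ref{orthout}, then induction via Burnside's theorem with Proposition \ref{compofcomp} passing condition (iii) to the two compressions), your argument for (i) $\Rightarrow$ (iii) takes a genuinely different route. The paper argues by contradiction with concrete operators: assuming a full subquotient on $\mathcal{E}$, it chooses $A, B \in \mathcal{A}$ acting in a prescribed way on two independent vectors of $\mathcal{E}$ and chases matrix entries in the triangulating basis to reach a contradiction, and this requires first treating the case where $\mathcal{E}$ is invariant and then reducing the general semi-invariant case to it by a nonorthogonal projection argument. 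Your flag $\mathcal{L}_i = \bigl((\mathcal{E}_1 \cap \mathcal{H}_i) + \mathcal{E}_2\bigr) \cap \mathcal{E}_2^\perp$ avoids both the case split and the entry-chasing, and in fact proves something slightly stronger: the compression to any semi-invariant subspace of dimension at least $2$ has a nontrivial proper invariant subspace (is reducible), from which non-fullness is immediate. The one step you should spell out more fully than your parenthetical does is the invariance of $\mathcal{L}_i$ itself: closure of the invariant subspace lattice under sums and intersections gives invariance of $\mathcal{M}_i = (\mathcal{E}_1 \cap \mathcal{H}_i) + \mathcal{E}_2$ for $\mathcal{A}$, but $\mathcal{L}_i$ is the \emph{orthogonal difference} $\mathcal{M}_i \ominus \mathcal{E}_2$, so one must also check that compressing by $P$ stays inside $\mathcal{L}_i$. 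This is routine --- writing $v = u + e$ with $u \in \mathcal{E}_1 \cap \mathcal{H}_i$ and $e \in \mathcal{E}_2$ gives $Av \in \mathcal{M}_i$, and since $\mathcal{E}_2 \subseteq \mathcal{M}_i \subseteq \mathcal{E}_1$ the projection $P$ moves any vector of $\mathcal{E}_1$ only within its coset modulo $\mathcal{E}_2$, so $PAPv = PAv \in \mathcal{M}_i \cap \mathcal{E}_2^\perp = \mathcal{L}_i$ --- but it is the crux of the construction and deserves an explicit line. With that filled in, your argument is sound and arguably cleaner than the paper's.
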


\begin{proof}
The equivalence of (i) and (ii) was Proposition \ref{orthout}. The proof of
(iii) $\Rightarrow$ (i) goes by induction on $n$. It is trivial for $n = 1$.
For $n > 1$, assume $\mathcal{A}$ has no full subquotients of
dimension greater than $1$. It must be properly contained in $M_n$ as
otherwise it would be a full subquotient of itself. Therefore
Burnside's theorem yields that it has a nontrivial invariant subspace
$\mathcal{E}$. Both $\mathcal{E}$ and $\mathcal{E}^\perp$ are then
semi-invariant, and letting $P$ and $Q$ be the orthogonal projections
onto these two subspaces, Proposition \ref{compofcomp} yields that both
$P\mathcal{A}P$ and $Q\mathcal{A}Q$ satisfy condition (iii),
so inductively we can find ordered orthonormal bases of $\mathcal{E}$ and
$\mathcal{E}^\perp$ with respect to which every operator in $P\mathcal{A}P$
and $Q\mathcal{A}Q$ has an upper triangular matrix. The concatenation of
these two bases is then an ordered orthonormal basis of $\mathbb{C}^n$ with
respect to which every operator in $\mathcal{A}$ has an upper triangular matrix.

For (i) $\Rightarrow$ (iii), suppose there is an ordered orthonormal basis
$\{f_1, \ldots, f_n\}$ of $\mathbb{C}^n$ with respect to which every operator
in $\mathcal{A}$ has an upper triangular matrix. To reach a contradiction,
assume $\mathcal{A}$ has a full subquotient of dimension greater
than $1$. Consider first the case that the corresponding semi-invariant
subspace $\mathcal{E}$ is actually invariant.

Let $v,w \in \mathcal{E}$ be linearly independent. Without loss of generality
we can assume that there is an index $1 \leq k \leq n$ with the property that
$\langle v, f_k\rangle \neq 0$ and $\langle v, f_{k'}\rangle =
\langle w,f_{k'}\rangle = 0$ for all $k' > k$. Now since $\mathcal{E}$ is
invariant and $P\mathcal{A}P$ is full, there are operators $A,B \in
\mathcal{A}$ satisfying $Av = v$, $Aw = 0$ and $Bv = 0$, $Bw = v$. Since $A$
is upper triangular for the $(f_i)$ basis and $v$ has no components past
$k$, the condition $Av = v$ implies that the $(k,k)$ entry of $A$ is $1$.
Since $w$ also has no nonzero components past $k$, $Aw = 0$ then implies
that $\langle w, f_k\rangle = 0$. But now $B$ being upper triangular
implies that $\langle Bw,f_k\rangle = 0$, which makes $Bw = v$ impossible.
This contradiction shows that the case where $\mathcal{E}$ is invariant
cannot happen.

Now consider the general case where $\mathcal{E}$ is merely semi-invariant.
Let $\mathcal{F} \subseteq \mathcal{E}^\perp$ be an invariant subspace such
that $\mathcal{F} \oplus \mathcal{E}$ is also invariant and let $Q$ be the
orthogonal projection onto $\mathcal{F}^\perp$. Then the vectors $Qf_1,
\ldots, Qf_n$ span $\mathcal{F}^\perp$, so by removing each $Qf_i$ which is
a linear combination of $Qf_1, \ldots, Qf_{i-1}$ we get an ordered basis
$(w_j)$ of $\mathcal{F}^\perp$. Now any $A \in M_n$ which is upper triangular
for $(f_i)$ and for which $\mathcal{F}$ is invariant will satisfy
$A(Qf_i - f_i) \in \mathcal{F}$ since $Qf_i - f_i \in \mathcal{F}$, and hence
$QA(Qf_i - f_i) = 0$. Thus
$$QAQ(Qf_i) = QA(Qf_i - f_i) + QAf_i = QAf_i \in
{\rm span}\{Qf_1, \ldots, Qf_i\},$$
i.e., $QAQ$ will be upper triangular for the $(w_j)$ basis of
$\mathcal{F}^\perp$. We have shown that there is an ordered vector space basis
of $\mathcal{F}^\perp$ with respect to which every operator in $Q\mathcal{A}Q$
has an upper triangular matrix, and (invoking Proposition \ref{orthout})
this reduces us to the first case
because $Q\mathcal{A}Q$ still has $P\mathcal{A}P$ as a subquotient
and $\mathcal{E}$ is invariant for $Q\mathcal{A}Q$. This completes the proof.
\end{proof}

Several characterizations of upper triangularizability of algebras of matrices
can be found in \cite{RR}, and the implication (iii) $\Rightarrow$(i) in the
preceding theorem can be inferred from Lemma 1.1.4 of \cite{RR}. However,
as far as I know this implication has not been explicitly stated anywhere,
and the reverse implication (i) $\Rightarrow$ (iii)) seems to be entirely new;
compare Example \ref{iutex}, which shows that it fails in infinite dimensions.
(Note that the ``quotients'' of \cite{RR} are what I am calling
``subquotients''.)

\begin{coro}\label{utcor}
Let $\mathcal{A} \subseteq M_n$ be hereditarily antisymmetric. Then there
is an ordered orthonormal basis of $\mathbb{C}^n$ with respect to which every
operator in $\mathcal{A}$ has an upper triangular matrix.
\end{coro}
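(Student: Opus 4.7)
The plan is to deduce this directly from Theorem \ref{utthm} by verifying its condition (iii): that $\mathcal{A}$ has no full subquotients of dimension greater than $1$.

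First I would observe that $B(\mathcal{E})$ is \emph{not} antisymmetric whenever $\dim \mathcal{E} > 1$, since $B(\mathcal{E})$ contains every self-adjoint operator on $\mathcal{E}$, and in dimension at least $2$ there are self-adjoint operators which are not scalar multiples of the identity (for instance, any nontrivial orthogonal projection). So if $\mathcal{A}$ had a full subquotient $P\mathcal{A}P = B(\mathcal{E})$ with $\dim \mathcal{E} > 1$, that subquotient would fail to be antisymmetric.

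Next, since $\mathcal{A}$ is hereditarily antisymmetric, every subquotient of $\mathcal{A}$ is antisymmetric by Definition \ref{hadef}. Combining with the previous observation, $\mathcal{A}$ admits no full subquotient of dimension greater than $1$, which is condition (iii) of Theorem \ref{utthm}.

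Applying Theorem \ref{utthm}, condition (i) then holds: there exists an ordered orthonormal basis of $\mathbb{C}^n$ with respect to which every operator in $\mathcal{A}$ has an upper triangular matrix. This is exactly the conclusion of the corollary. There is no real obstacle here; the corollary is essentially a one-line application of the theorem, the only substantive observation being that $B(\mathcal{E})$ cannot be antisymmetric once $\dim\mathcal{E} \geq 2$.
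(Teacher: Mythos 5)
Your proof is correct and is exactly the argument the paper intends: the corollary is stated as an immediate consequence of Theorem \ref{utthm}, obtained by noting that a full subquotient $B(\mathcal{E})$ with $\dim\mathcal{E}>1$ contains nonscalar self-adjoint operators and so cannot be antisymmetric, whence hereditary antisymmetry gives condition (iii). Nothing is missing.
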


Mere antisymmetry is not a sufficient hypothesis to ensure this conclusion.
We already saw, in Example \ref{fullsubex}, an antisymmetric operator algebra
in $M_4$ which had an invariant subspace $\mathcal{E}$ of dimension $2$ such
that the corresponding subobject was full. Thus, according
to Theorem \ref{utthm} this algebra cannot be made upper triangular.

\begin{coro}
The antisymmetric operator algebra of Example \ref{fullsubex} cannot be upper
triangularized.
\end{coro}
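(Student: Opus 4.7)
The plan is to invoke the contrapositive of Theorem~\ref{utthm}, specifically (i)~$\Rightarrow$~(iii): to conclude that $\mathcal{A}$ cannot be upper triangularized, it suffices to produce a full subquotient of dimension greater than $1$.

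The natural candidate for the corresponding semi-invariant subspace is $\mathcal{E} = \mathrm{span}\{v_1,v_2\}$ itself, which is in fact invariant for $\mathcal{A}$: the generators $E_{33}$ and $E_{44}$ of $\mathcal{D}_{\mathbf{v}}$ annihilate $\mathcal{E}$, while $E_{11}$, $E_{22}$, $E_{12}$, $E_{21}$ all send $\mathcal{E}$ into itself. Invariance is the feature that makes the compression easy to compute, since for $A\in\mathcal{A}$ and $v\in\mathcal{E}$ we simply have $PAPv = Av$, where $P$ denotes the orthogonal projection onto $\mathcal{E}$.

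I then need to verify that the compression $P\mathcal{A}P$ exhausts all of $B(\mathcal{E})$. The four elements $PE_{11}P$, $PE_{22}P$, $PE_{12}P$, $PE_{21}P$, viewed as operators on $\mathcal{E}$, are just the restrictions $E_{ij}|_\mathcal{E}$, and their matrices with respect to the basis $\{v_1,v_2\}$ are precisely the four standard $2\times 2$ matrix units. They are therefore linearly independent in the $4$-dimensional space $B(\mathcal{E})$, so $P\mathcal{A}P = B(\mathcal{E})$. This is a full subquotient of dimension $4>1$, and Theorem~\ref{utthm} delivers the conclusion.

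There is no real obstacle here; the example in \ref{fullsubex} was engineered so that the $2\times 2$ block acting on $\mathcal{E}$ is already the full matrix algebra $M_2$. The possibly nonorthogonal character of the basis $\mathbf{v}$ plays no role, because invariance of $\mathcal{E}$ identifies the compression with the honest restriction, so the matrix entries in the $\{v_1,v_2\}$ basis can be read off directly from the definitions of the $E_{ij}$.
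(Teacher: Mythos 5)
Your proposal is correct and follows exactly the paper's route: the paper likewise deduces the corollary from Theorem \ref{utthm} by noting that $\mathcal{E} = \mathrm{span}\{v_1,v_2\}$ is an invariant subspace on which the subobject is full. You merely spell out the (easy) verification that the compression contains all four matrix units for the basis $\{v_1,v_2\}$ of $\mathcal{E}$ and hence equals $B(\mathcal{E})$, a detail the paper leaves implicit.
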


Let us also note that $\mathcal{A}$ being upper triangular does not prevent
the existence of an orthogonal projection $P$ onto a subspace $\mathcal{E}$
of dimension greater than $1$ such that $P\mathcal{A}P \cong B(\mathcal{E})$.
For instance, according to Proposition 2.2 of \cite{W2},
the algebra $\mathcal{D}$ of diagonal matrices in $M_{k^2 + k - 1}$ satisfies
$P\mathcal{D}P \cong M_k$ for some orthogonal projection $P$ whose range has
dimension $k$. Thus, the obstruction to upper triangularizing is not the
existence of a $k$-clique, in the terminology of \cite{W2}, but the existence
of a $k$-clique which lives on a semi-invariant subspace.

Our structure theorem for hereditarily antisymmetric operator algebras in
$M_n$ will state that every such algebra is contained in the algebra of all
Jordanesque matrices relative to some basis of $\mathbb{C}^n$. Let us
introduce a notation for this algebra.

\begin{defi}
Let a {\it block ordered basis} of $\mathbb{C}^n$ be an ordered basis
$(v_1, \ldots, v_n)$ together with a choice of $n_1, \ldots, n_k > 0$
satisfying $n_1 + \cdots + n_k = n$. Given a block ordered basis ${\bf v}$,
define $\mathcal{J}_{\bf v}$ to be the set of all operators on $\mathbb{C}^n$
whose matrices for ${\bf v}$ are $(n_1, \ldots, n_k)$-Jordanesque .
\end{defi}

It will often be convenient to indicate the blocks explicitly in the
listing of basis vectors, i.e., as $(v_1^1, \ldots, v_{n_1}^1, \ldots,
v_1^k, \ldots, v_{n_k}^k)$.
As a matter of normalization, we can assume that for each $1 \leq j \leq k$
the set $\{v^j_1, \ldots, v^j_{n_j}\}$ is orthonormal. Applying Proposition
\ref{orthout} to the span of each of these sets shows that the set of all
Jordanesque matrices for any given basis equals the set of all Jordanesque
matrices for a basis which is normalized in this way.

Depending on the nature of the block ordered basis ${\bf v}$, the algebra
$\mathcal{J}_{\bf v}$ may or may not be hereditarily antisymmetric.
The invariant subspaces for $\mathcal{J}_{\bf v}$ are precisely the subspaces
of the form ${\rm span}\{v^1_1, \ldots, v^1_{m_1}, \ldots, v^k_1, \ldots,
v^k_{m_k}\}$ where $0 \leq m_j \leq n_j$ for each $j$. We allow $m_j = 0$
to accomodate the possibility that no vectors from the $j$th block appear.
Call this subspace the {\it $(m_1, \ldots, m_k)$ subspace.} The
condition we need to ensure hereditary antisymmetry is the following.

\begin{defi}
Say that a block ordered basis ${\bf v} =
(v^1_1, \ldots, v^1_{n_1}, \ldots, v^k_1, \ldots, v^k_{n_k})$ is
{\it normalized} if for each $j$ the set $\{v^j_1, \ldots, v^j_{n_j}\}$ is
orthonormal. Say that it is {\it suitably nonorthogonal} if, for every
$0 \leq m_1 \leq n_1$, $\ldots$, $0 \leq m_k \leq n_k$ and every
$1 \leq j < j' \leq k$ such that $m_j < n_j$ and $m_{j'} < n_{j'}$, we have
$$\langle Pv^j_{m_j+1}, v^{j'}_{m_{j'} + 1}\rangle \neq 0,\eqno{(*)}$$
where $P$ is the orthogonal projection onto the orthocomplement of the
$(m_1, \ldots, m_k)$ subspace.
\end{defi}

Thus, in order to be suitably nonorthogonal, the basis must be orthonormal
in each block and a finite number of additional conditions ($*$) must be
satisfied. For instance, taking $m_1 = \cdots = m_k = 0$ in ($*$) shows that we
require $\langle v^j_1, v^{j'}_1\rangle \neq 0$ for all $1 \leq j < j' \leq k$.
Letting exactly one $m_j$ be nonzero, it is not too hard to see that ($*$)
reduces to the requirement that $\langle v^j_i, v^{j'}_1\rangle \neq 0$ for
all $j \neq j'$ and $1 \leq i \leq n_j$ (since the $(m_1, \ldots, m_k)$
subspace in this case is just ${\rm span}\{v^j_1, \ldots, v^j_{i-1}\}$,
to which $v^j_i$ is already orthogonal). If more than one of the
$m_j$ are nonzero then the condition becomes more complicated,
but it should be generically satisfied: the bases for which
$Pv^j_{m_j+1}$ and $Pv^{j'}_{m_{j'} + 1}$ are orthogonal are exceptional.

Say that an operator algebra $\mathcal{A} \subseteq \mathcal{J}_{\bf v}$
{\it distinguishes blocks} if for any $1 \leq j < j' \leq k$, there is an
operator in $\mathcal{A}$ whose matrix has different values on the main
diagonals of the $j$ and $j'$ blocks.

\begin{theo}\label{dvthm}
Let ${\bf v} = \{v^1_1, \ldots, v^1_{n_1}, \ldots, v^k_1, \ldots, v^k_{n_k}\}$
be a normalized block ordered basis of $\mathbb{C}^n$. If ${\bf v}$ is suitably
nonorthogonal then $\mathcal{J}_{\bf v}$ is hereditarily antisymmetric. If
${\bf v}$ is not suitably nonorthogonal then $\mathcal{J}_{\bf v}$
is not hereditarily antisymmetric, nor is any subalgebra of
$\mathcal{J}_{\bf v}$ which distinguishes blocks.
\end{theo}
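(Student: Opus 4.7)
The plan is to reduce everything to analyzing subquotients of $\mathcal{J}_{\bf v}$ arising from pairs of $(m_1, \ldots, m_k)$ subspaces, since the invariant subspaces of $\mathcal{J}_{\bf v}$ are exactly these (as noted just before the theorem). For $\mathcal{E}_2 \subseteq \mathcal{E}_1$ of the $(m_1, \ldots, m_k)$ and $(m'_1, \ldots, m'_k)$ forms respectively (with $m_j \leq m'_j$), let $P$ denote orthogonal projection onto $\mathcal{E}_1 \ominus \mathcal{E}_2$. A direct calculation using invariance of $\mathcal{E}_2$ shows that for any $A \in \mathcal{J}_{\bf v}$ with block diagonal values $\lambda_j$,
\[
PAP(Pv^j_l) = \lambda_j Pv^j_l + \sum_{s = m_j+1}^{l-1} a^j_{sl}\, Pv^j_s \qquad (m_j < l \leq m'_j).
\]
Thus $PAP$ is $(n'_1, \ldots, n'_k)$-Jordanesque in the spanning set $(Pv^j_{m_j+1}, \ldots, Pv^j_{m'_j})_{j=1}^k$ of $\mathcal{E}_1 \ominus \mathcal{E}_2$, with $n'_j = m'_j - m_j$; this spanning set is linearly independent because any nontrivial relation would place a nonzero element of $\mathcal{E}_1$ into $\mathcal{E}_2$, contradicting linear independence of ${\bf v}$. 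The basis need not be normalized, but applying Gram--Schmidt within each block separately preserves that block's partial spans, and hence preserves both the block-diagonal decomposition and the upper-triangular-with-constant-diagonal form within each block, by the logic of Proposition \ref{orthout}. I thereby obtain a normalized block ordered basis ${\bf v}'$ of $\mathcal{E}_1 \ominus \mathcal{E}_2$ with $P\mathcal{J}_{\bf v}P \subseteq \mathcal{J}_{{\bf v}'}$.

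The key auxiliary lemma I need is a characterization of when $\mathcal{J}_{\bf u}$ is antisymmetric for a normalized block ordered basis ${\bf u}$: namely, iff the \emph{block nonorthogonality graph} (vertices the block indices, edges between $j \neq j'$ whenever $\langle u^j_i, u^{j'}_l\rangle \neq 0$ for some $i, l$) is connected. This goes via Proposition \ref{sainf}, which in finite dimensions reduces antisymmetry to the absence of nontrivial orthogonal projections. An idempotent Jordanesque matrix has, on each block, the form $\lambda I + N$ satisfying $(\lambda I + N)^2 = \lambda I + N$; together with nilpotence of $N$, this forces $\lambda \in \{0,1\}$ and $N = 0$, so idempotents in $\mathcal{J}_{\bf u}$ are block-constant with block values in $\{0, 1\}$. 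Such an idempotent is self-adjoint iff its range (the $1$-blocks) is orthogonal to its kernel (the $0$-blocks), which corresponds exactly to the absence of edges between these two sets of blocks, i.e., to a disconnection of the graph.

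With this lemma in hand, part (a) follows directly. Given ${\bf v}$ suitably nonorthogonal and a subquotient $P\mathcal{J}_{\bf v}P \subseteq \mathcal{J}_{{\bf v}'}$ as above, the blocks of ${\bf v}'$ span ${\rm span}\{Pv^j_{m_j+1}, \ldots, Pv^j_{m'_j}\}$, so two distinct blocks $j, j'$ with $n'_j, n'_{j'} > 0$ are adjacent in the block nonorthogonality graph of ${\bf v}'$ iff $\langle Pv^j_i, Pv^{j'}_l\rangle \neq 0$ for some admissible $i, l$. Condition ($*$) for the tuple $(m_1, \ldots, m_k)$ supplies $\langle Pv^j_{m_j + 1}, v^{j'}_{m_{j'} + 1}\rangle \neq 0$, and since $Pv^j_{m_j + 1}$ lies in the range of the self-adjoint projection $P$ this equals $\langle Pv^j_{m_j+1}, Pv^{j'}_{m_{j'}+1}\rangle$. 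So the graph is complete, hence connected, and the lemma yields antisymmetry of $\mathcal{J}_{{\bf v}'}$ and therefore of $P\mathcal{J}_{\bf v}P$.

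For part (b), let $\mathcal{A} \subseteq \mathcal{J}_{\bf v}$ distinguish blocks and let $(m_1, \ldots, m_k)$, $j < j'$ with $m_j < n_j$, $m_{j'} < n_{j'}$ witness the failure of ($*$). I take the two-dimensional subquotient obtained by incrementing $m_j$ and $m_{j'}$ by one, so that $\mathcal{E}_1 \ominus \mathcal{E}_2$ has basis $\{Pv^j_{m_j+1}, Pv^{j'}_{m_{j'}+1}\}$, orthogonal by the failure of ($*$). The formula above, with each new block of size one, specializes to $PAP = \mathrm{diag}(\lambda^A_j, \lambda^A_{j'})$ in this basis for every $A \in \mathcal{A}$. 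Choosing $A$ with $\lambda^A_j \neq \lambda^A_{j'}$ (available because $\mathcal{A}$ distinguishes blocks $j, j'$), at least one of these values is nonzero, so Theorem \ref{findproj} applied to $PAP$ produces, inside the algebra generated by $PAP$ and hence inside $P\mathcal{A}P$, either $\mathrm{diag}(1, 0)$ or $\mathrm{diag}(0, 1)$. Since the basis is orthogonal, either is a nontrivial orthogonal projection, so $P\mathcal{A}P$ is not antisymmetric by Proposition \ref{sainf}, and $\mathcal{A}$ is not hereditarily antisymmetric; the statement for $\mathcal{J}_{\bf v}$ itself follows since $\mathcal{J}_{\bf v}$ trivially distinguishes blocks. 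The main obstacle I expect is the initial reduction: because the image vectors $Pv^j_l$ need not remain orthonormal within a single block (cross-block components of ${\bf v}$ can create non-orthogonality after projecting out $\mathcal{E}_2$), the in-block Gram--Schmidt step and the verification that Jordanesque structure is preserved demand careful bookkeeping.
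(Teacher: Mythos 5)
Your proof is correct; the reduction to the projected basis is the same as the paper's, but the way you then certify antisymmetry is genuinely different. Both arguments begin by observing that the compression $PAP$ to $\mathcal{E}_1\ominus\mathcal{E}_2$ acts on the projected vectors $Pv^j_l$ just as $A$ acts on the $v^j_l$ modulo the invariant subspace $\mathcal{E}_2$ (the paper packages this via a companion subspace and Proposition \ref{compprop}; you verify the formula directly, and your blockwise Gram--Schmidt step does go through by the logic of Proposition \ref{orthout}, since each block's partial spans are $PAP$-invariant and the restriction to a block has only the eigenvalue $\lambda_j$). From there the paper analyzes an arbitrary self-adjoint element of the subquotient: each within-block compression $P_jAP_j$ is upper triangular with constant diagonal and self-adjoint, hence scalar by Propositions \ref{orthout} and \ref{ex1maxha}, so $PAP$ is diagonal in the projected basis, and condition $(*)$ then makes the leading projected vectors of distinct blocks nonorthogonal eigenvectors, forcing all eigenvalues to coincide. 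You instead prove a standalone lemma --- $\mathcal{J}_{\bf u}$ is antisymmetric iff its block nonorthogonality graph is connected --- by classifying the idempotents of $\mathcal{J}_{\bf u}$ (block-constant, $0/1$-valued, with vanishing nilpotent part) and invoking the orthogonal-projection criterion of Proposition \ref{tfaefd}; this cleanly generalizes the first half of Theorem \ref{ex2thm} and isolates a reusable fact, at the modest cost of the bookkeeping needed to see that $P\mathcal{J}_{\bf v}P \subseteq \mathcal{J}_{{\bf v}'}$ for a normalized block ordered basis ${\bf v}'$ (only containment is needed, which you have). For the second half you also take a shortcut the paper does not: rather than reducing to unital algebras via Proposition \ref{unitprop} and extracting the diagonal subalgebra via Corollary \ref{bigdiag}, you apply Theorem \ref{findproj} directly to the $2\times 2$ compression of a block-distinguishing operator to manufacture a nontrivial orthogonal projection inside $P\mathcal{A}P$. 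Both routes are valid; yours trades the paper's self-adjointness computation for an idempotent count, and the only point worth tightening is the silent identification of the projection in $(*)$ (onto $\mathcal{E}_2^\perp$) with your $P$ (onto $\mathcal{E}_1\ominus\mathcal{E}_2$), which is harmless because the two agree on $\mathcal{E}_1$.
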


\begin{proof}
Suppose ${\bf v}$ is suitably nonorthogonal. We must show that if
$\mathcal{E}$ is semi-invariant for $\mathcal{J}_{\bf v}$ and
$PAP \in B(\mathcal{E})$ is self-adjoint, where $P$ is the orthogonal
projection onto $\mathcal{E}$ and $A \in \mathcal{J}_{\bf v}$, then
$PAP$ is a scalar multiple of $P$.

Fix $\mathcal{E}$ and $A$ such that $PAP$ is self-adjoint. Now
$\mathcal{E} = \mathcal{E}_1 \ominus \mathcal{E}_2$ where $\mathcal{E}_1$
is the $(s_1, \ldots, s_k)$ subspace and $\mathcal{E}_2$ is the
$(r_1, \ldots, r_k)$ subspace, for some $0 \leq r_1 \leq s_1 \leq n_1$,
$\ldots$, $0 \leq r_k \leq s_k \leq n_k$. The basis vectors
$v^1_{r_1 + 1}, \ldots, v^1_{s_1}, \ldots, v^k_{r_k + 1}, \ldots,
v^k_{s_k}$ need not belong to $\mathcal{E}$, but they span a companion
subspace $\mathcal{F}$ and their orthogonal projections into $\mathcal{E}$
constitute a basis for $\mathcal{E}$. With respect to this basis $PAP$ is
Jordanesque. This follows from the fact that $QAQ$ is Jordanesque, where $Q$
is the natural projection onto $\mathcal{F}$, together with Proposition
\ref{compprop}.

For each $j$, consider the orthogonal projection $P_j$ onto
${\rm span}\{Pv^j_{r_j + 1}, \ldots, Pv^j_{s_j}\}$. The matrix of $P_jAP_j
= P_j(PAP)P_j$ for this basis is upper triangular and constant on the main
diagonal, but it is also self-adjoint, and thus it follows from Propositions
\ref{orthout} and \ref{ex1maxha} that $P_jAP_j$ is a scalar multiple of $P_j$.

Since this was true for all $j$, it follows that $PAP$ is diagonal for
the basis of $\mathcal{E}$ obtained by projecting the standard basis of
$\mathcal{F}$ into $\mathcal{E}$.
Say $P_jAP_j = \lambda_j P_j$, i.e., $\lambda_j$ is the main diagonal
entry of $A$ on the $j$th block of ${\bf v}$. Fix distinct indices $j, j'$
for which $r_j < s_j$ and $r_{j'} < s_{j'}$. Suitable nonorthogonality of
${\bf v}$ then implies that $Pv^j_{r_j + 1}$ and $Pv^{j'}_{r_{j'}+1}$ are
not orthogonal. Since these are eigenvectors belonging to the eigenvalues
$\lambda_j$ and $\lambda_{j'}$, the usual computation (as in the proof of
Theorem \ref{ex2thm}, for example) then shows that self-adjointness of $PAP$
implies $\lambda_j = \lambda_{j'}$. As $j$ and $j'$ were arbitrary (among
the $j$'s represented in $\mathcal{E}$), we
conclude that $PAP$ is a scalar multiple of $P$, as desired. This completes
the proof of the first assertion of the theorem.

For the second assertion, suppose ${\bf v}$ is not suitably nonorthogonal;
we must show that every subalgebra $\mathcal{A}$ of $\mathcal{J}_{\bf v}$
which distinguishes blocks is not hereditarily antisymmetric. (Obviously,
this includes $\mathcal{J}_{\bf v}$ itself.) By Proposition
\ref{unitprop} we can restrict attention to unital subalgebras. By Corollary
\ref{bigdiag} every such algebra contains the algebra $\mathcal{A}_{\rm diag}$
of operators whose matrix for the ${\bf v}$ basis is diagonal, with constant
main diagonal entries on each block. So we must find such a matrix whose
compression to some semi-invariant subspace (for $\mathcal{A}$) is
nonscalar and self-adjoint.

Since ${\bf v}$ fails to be suitably nonorthogonal, there exist
$m_1, \ldots, m_k$ and $1 \leq j < j' \leq k$ such that
$Pv^j_{m_j+1}$ and $Pv^{j'}_{m_{j'} + 1}$ are orthogonal, where $P$ is
the orthogonal projection onto the orthocomplement of the
$(m_1, \ldots, m_k)$ subspace. Let $m'_l = m_l$ when $l \not\in
\{j, j'\}$, $m'_j = m_j + 1$, and $m'_{j'} = m_{j'} + 1$ and consider
the orthogonal difference of the $(m'_1, \ldots, m'_k)$ subspace and
the $(m_1, \ldots, m_k)$ subspace. These subspaces are invariant for
$\mathcal{J}_{\bf v}$, and hence also for $\mathcal{A}$, so their orthogonal
difference is semi-invariant for $\mathcal{A}$. It is spanned by the
orthogonal vectors $Pv^j_{m_j+1}$ and $Pv^{j'}_{m_{j'} + 1}$, and there
is an operator in $\mathcal{A}_{\rm diag}$ whose
$j$ and $j'$ eigenvalues are distinct and real, and thus whose compression to
$\mathcal{E}$ is nonscalar and self-adjoint. We conclude that $\mathcal{A}$
is not hereditarily antisymmetric.
\end{proof}

Theorem \ref{dvthm} tells us that if $\mathcal{J}_{\bf v}$ fails to be
hereditarily antisymmetric then so does every subalgebra which distinguishes
blocks. It does not tell us that if $\mathcal{J}_{\bf v}$ is hereditarily
antisymmetric then the same is true of any subalgebra which distinguishes
blocks. In fact, this can fail.

\begin{exam}
Let ${\bf v} = \{v^1_1, v^1_2, v^2_1, v^2_2\}$ be a normalized suitably
nonorthogonal basis of $\mathbb{C}^4$ satisfying
$\langle v^1_2, v^2_2\rangle = 0$. For bases of this size, suitable
nonorthogonality amounts to the sets $\{v^1_1, v^1_2\}$
and $\{v^2_1, v^2_2\}$ both being orthonormal, $v^1_1$ not being orthogonal
to either $v^2_1$ or $v^2_2$, $v^2_1$ not being orthogonal to either
$v^1_1$ or $v^1_2$, and $Pv^1_2$ and $Pv^2_2$ not being orthogonal, where $P$
is the orthogonal projection onto the orthocomplement of
${\rm span}\{v^1_1, v^2_1\}$. This is compatible with $v^1_2$ and $v^2_2$
being orthogonal.

According to Theorem \ref{dvthm}, $\mathcal{J}_{\bf v}$ is hereditarily
antisymmetric. However, the elements of $\mathcal{J}_{\bf v}$ which are
diagonal for ${\bf v}$ has ${\rm span}\{v^1_2,v^2_2\}$ as an invariant
subspace, and the elements of $\mathcal{J}_{\bf v}$ which are diagonal
for ${\bf v}$ compress to all operators on this subspace which are diagonal
for the orthonormal basis $\{v^1_2, v^2_2\}$. So these elements constitute
a subalgebra of $\mathcal{J}_{\bf v}$ which distinguishes blocks but is
not hereditarily antisymmetric.
\end{exam}

Of course, if $\mathcal{J}_{\bf v}$ is hereditarily antisymmetric then in
particular it is antisymmetric, and hence so is any subalgebra. So every
subalgebra is antisymmetric, but not necessarily hereditarily antisymmetric.

Before proving our structure theorem for hereditarily antisymmetric
operator algebras, we need one more easy lemma.

\begin{lemma}\label{stlemma}
Let $\mathcal{A} \subseteq \widetilde{M}_n$ be an operator algebra all of
whose elements are upper triangular. Then there exists an operator
$A \in \mathcal{A}$
all of whose main diagonal entries are real and with the property that
$a_{ii} = a_{jj}$ implies $b_{ii} = b_{jj}$ for all $B \in \mathcal{A}$,
where $A = (a_{ij})$ and $B = (b_{ij})$.
\end{lemma}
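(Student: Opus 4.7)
The plan is to reduce to Lemma \ref{linflemma} by passing to the diagonal. Define $\pi \colon \mathcal{A} \to \mathbb{C}^n \cong l^\infty_n$ by $\pi(B) = (b_{11}, \ldots, b_{nn})$. Because every element of $\mathcal{A}$ is upper triangular, the diagonal of a product is the entrywise product of the diagonals (only the $k=i$ term survives in $\sum_k B_{ik}C_{ki}$), so $\pi$ is an algebra homomorphism and $\pi(\mathcal{A})$ is a subalgebra of $l^\infty_n$. Lemma \ref{linflemma} then tells us $\pi(\mathcal{A})$ is self-adjoint.

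By the classification of self-adjoint subalgebras of $l^\infty_n$ recorded after Lemma \ref{linflemma}, there is an equivalence relation $\sim$ on $\{1, \ldots, n\}$ with blocks $B_1, \ldots, B_k$ such that $\pi(\mathcal{A})$ equals either ${\rm span}\{1_{B_1}, \ldots, 1_{B_k}\}$ (in the unital case) or ${\rm span}\{1_{B_i} : i \neq i_0\}$ for some distinguished index $i_0$ (in the nonunital case, where elements of $\pi(\mathcal{A})$ must vanish on $B_{i_0}$). In either situation, I can pick pairwise distinct real numbers $\lambda_1, \ldots, \lambda_k$, with $\lambda_{i_0} = 0$ in the nonunital case, so that $f := \sum_i \lambda_i 1_{B_i}$ is a real-valued element of $\pi(\mathcal{A})$ taking $k$ distinct values, one on each block.

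Choosing any preimage $A \in \mathcal{A}$ with $\pi(A) = f$ completes the proof: its diagonal entries are real, and its diagonal satisfies $a_{ii} = a_{jj}$ exactly when $i \sim j$. For any $B \in \mathcal{A}$, $\pi(B)$ lies in $\pi(\mathcal{A})$ and is therefore constant on each block of $\sim$, so $a_{ii} = a_{jj}$ forces $b_{ii} = b_{jj}$. The argument is essentially routine once one notices that the diagonal-extracting map is an algebra homomorphism; the real work is packaged inside Lemma \ref{linflemma}. The only minor care needed is handling the nonunital case, where the distinguished block is pinned to the value $0$ --- but this costs nothing, since we simply take the remaining $\lambda_i$ to be distinct nonzero reals.
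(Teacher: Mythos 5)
Your proof is correct and is essentially the paper's own argument: both observe that the diagonal-extraction map is an algebra homomorphism onto a subalgebra of $l^\infty_n$, invoke Lemma \ref{linflemma} and the classification of self-adjoint subalgebras following it to obtain the equivalence relation, and then lift a real-valued function taking distinct values on the blocks. No gaps; the handling of the nonunital case matches the paper's.
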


\begin{proof}
The main diagonal entries of the matrices in $\mathcal{A}$ constitute a
subalgebra of $l^\infty_n$. By Lemma \ref{linflemma} and the comment
following it, there is an equivalence relation on $\{1, \ldots, n\}$ such
that this subalgebra consists of all the functions which are constant on
each block (if $\mathcal{A}$ is unital) or all the functions which are
constant on each block and vanish on
some specified block (if it is not). In either case we can find a function
which takes a different real value on each block, and then take $A$ to be
a matrix in $\mathcal{A}$ whose main diagonal entries are this function.
\end{proof}

\begin{theo}\label{mainthm}
Let $\mathcal{A} \subseteq M_n$ be a hereditarily antisymmetric operator
algebra. Then there is a normalized suitably nonorthogonal block ordered
basis $(v^1_1, \ldots, v^1_{n_1}, \ldots, v^k_1, \ldots, v^k_{n_k})$ of
$\mathbb{C}^n$, with respect to which the matrix of every
element of $\mathcal{A}$ is Jordanesque.
\end{theo}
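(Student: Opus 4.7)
I would prove the theorem by strong induction on $n$, the case $n=1$ being trivial. For $n>1$, Corollary~\ref{utcor} puts $\mathcal{A}$ in upper triangular form relative to some ordered orthonormal basis, and Lemma~\ref{stlemma} then provides an $A\in\mathcal{A}$ with real main-diagonal entries that detects the finest equivalence $\sim_{\mathcal{A}}$ on $\{1,\ldots,n\}$ (namely $i\sim_{\mathcal{A}} j$ iff $b_{ii}=b_{jj}$ for every $B\in\mathcal{A}$). If $A$ has only a single distinct diagonal value, then every element of $\mathcal{A}$ has just one eigenvalue, and the implication (v)$\Rightarrow$(i) of Theorem~\ref{tfaeex1} places $\mathcal{A}\subseteq\mathcal{T}_n$ relative to some ordered orthonormal basis; this is already Jordanesque with one block of size $n$, and suitable nonorthogonality is vacuous.

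Otherwise $A$ has at least two distinct eigenvalues. Applying Theorem~\ref{findproj} (after shifting $A$ by a scalar if necessary to arrange that the relevant eigenvalue is nonzero) yields a projection $\pi\in\mathcal{A}$ with $\pi\neq 0,I$, whose range $V$ is the generalized eigenspace of $A$ for a chosen eigenvalue $\lambda$; by Proposition~\ref{tfaefd}(iii) the projection $\pi$ is nonorthogonal. The \textbf{key step}, which I expect to be the main obstacle, is to show that both $V$ and $W:=\ker\pi$ are $\mathcal{A}$-invariant, so that $\mathbb{C}^n=V\oplus W$ is a direct-sum decomposition into proper $\mathcal{A}$-invariant subspaces.

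Granting this, $V$ and $W$ are semi-invariant of strictly smaller dimension, and the restrictions $\mathcal{A}|_V$ and $\mathcal{A}|_W$ (which coincide with the respective orthogonal compressions, since an invariant subspace is preserved by restriction) are hereditarily antisymmetric by Corollary~\ref{subha}. The inductive hypothesis yields normalized suitably nonorthogonal block ordered bases $\mathbf{u}$ of $V$ and $\mathbf{w}$ of $W$ placing each restriction inside the corresponding $\mathcal{J}$. Their concatenation $\mathbf{v}$ is a block ordered basis of $\mathbb{C}^n$ in which every $B\in\mathcal{A}$---being block diagonal with respect to $V\oplus W$ and Jordanesque on each summand---is Jordanesque, so $\mathcal{A}\subseteq\mathcal{J}_{\mathbf{v}}$. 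Normalization of $\mathbf{v}$ is inherited blockwise. For suitable nonorthogonality, the blocks of $\mathbf{v}$ coincide with the $\sim_{\mathcal{A}}$-classes ($A$ separates $V$ from $W$ because $A|_V$ has only the eigenvalue $\lambda$ while the other eigenvalues appear on $W$, and the inductive bases realize the $\sim_{\mathcal{A}}$-classes inside each summand), so $\mathcal{A}$ distinguishes every pair of blocks of $\mathbf{v}$; the contrapositive of the second assertion of Theorem~\ref{dvthm} then forces $\mathbf{v}$ to be suitably nonorthogonal.

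To address the main obstacle, my plan is to argue by contradiction: if some $B\in\mathcal{A}$ satisfies $BV\not\subseteq V$, then $N:=(I-\pi)B\pi\in\mathcal{A}$ is nonzero and nilpotent ($N^2=0$). Choosing $v\in V$ with $Nv\neq 0$ and $v\perp Nv$ (adjustable within the fiber of $\pi$), one seeks a two-dimensional semi-invariant subspace $L$ for $\mathcal{A}$ containing both $v$ and $Nv$, on which the orthogonal compression of $\pi$ becomes a nontrivial \emph{orthogonal} rank-one projection---contradicting Proposition~\ref{tfaefd}(iii) applied to the hereditarily antisymmetric subquotient $P_L\mathcal{A}P_L$. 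The delicate point is producing $L$ as a genuine orthogonal difference of two $\mathcal{A}$-invariant subspaces rather than merely a two-dimensional linear span, and I would attack this by exploiting the invariant flag supplied by upper triangularization to bracket $L$ between two invariant subspaces of appropriate dimension.
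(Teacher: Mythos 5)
Your overall architecture is reasonable and several pieces are sound: reducing to the unital case, disposing of the single\mbox{-}eigenvalue case via Theorem~\ref{tfaeex1}, extracting a nontrivial idempotent $\pi\in\mathcal{A}$ from Theorem~\ref{findproj}, recursing on an invariant direct\mbox{-}sum decomposition, and deducing suitable nonorthogonality at the end from the second half of Theorem~\ref{dvthm} via block\mbox{-}distinguishing (which, as you half\mbox{-}notice, must be built into the induction hypothesis, as the paper does). But the proof has a genuine gap exactly where you flag it, and that gap is not a technical loose end --- it is essentially the entire content of the theorem beyond Corollary~\ref{utcor}. Your key step asserts that ${\rm ran}(\pi)$ and $\ker(\pi)$ are both $\mathcal{A}$-invariant, i.e.\ that $\pi$ commutes with every $B\in\mathcal{A}$. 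This is true, but only as a \emph{consequence} of Theorem~\ref{mainthm} (in $\mathcal{J}_{\bf v}$ every idempotent is a sum of block identities, hence central); no independent argument is supplied, and the sketch you give does not close it. Setting $N=(I-\pi)B\pi$, you need a two\mbox{-}dimensional semi\mbox{-}invariant subspace of $\mathcal{A}$ on which $v$ and $Nv$ both survive independently. Semi\mbox{-}invariant subspaces of $\mathcal{A}$ are scarce and rigid, and the upper\mbox{-}triangular flag $\mathcal{F}_0\subset\cdots\subset\mathcal{F}_n$ does not rescue you: if $v\in\mathcal{F}_a\setminus\mathcal{F}_{a-1}$ and $Nv\in\mathcal{F}_b\setminus\mathcal{F}_{b-1}$ with $b\le a-2$ (which can perfectly well happen, since $N$ can push $v$ far down the flag), then every orthogonal difference $\mathcal{F}_i\ominus\mathcal{F}_{i-2}$ kills one of the two vectors, so no ``bracketing'' by flag subspaces produces the subquotient you need. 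You would have to manufacture invariant subspaces adapted to $v$ and $Nv$ rather than to the flag, and nothing in the proposal indicates how.

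For comparison, the paper sidesteps this obstruction entirely by not splitting along a spectral idempotent. It extends a Jordanesque basis one vector at a time: the first $n-1$ basis vectors are supplied by induction on the invariant hyperplane ${\rm span}\{w_1,\dots,w_{n-1}\}$, the last vector is chosen as an eigenvector of a compressed operator, and when Jordanesqueness of some $B$ fails, the failure is located at the \emph{highest} index $v^j_i$ where any operator fails. That maximality is what guarantees that the two subspaces $\mathcal{E}_1$ and $\mathcal{E}_2$ (explicit spans of subsets of the already\mbox{-}constructed basis) are genuinely invariant, so that their orthogonal difference is a two\mbox{-}dimensional semi\mbox{-}invariant subspace whose compression has dimension at least $3$, contradicting Proposition~\ref{dimcount}. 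If you want to salvage your route, you should either prove directly that every idempotent in a unital hereditarily antisymmetric subalgebra of $M_n$ is central --- a statement worth isolating, but one that appears to require an inductive construction of comparable delicacy --- or adopt the paper's device of choosing the failure index extremal so that the needed invariant subspaces come for free.
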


\begin{proof}
We will prove that there are numbers $n_1 + \cdots + n_k = n$ and a block
ordered basis
$(v^1_1, \ldots, v^1_{n_1}, \ldots, v^k_1, \ldots, v^k_{n_k})$ with respect
to which the matrix of every element of $\mathcal{A}$ is Jordanesque and
such that $\mathcal{A}$ distinguishes blocks. After orthonormalizing each
block, we get a basis which must be suitably nonorthogonal by the second
part of Theorem \ref{dvthm}.

We can assume that $\mathcal{A}$ is unital by Proposition \ref{unitprop}.
The proof goes by induction on $n$. First,
according to Corollary \ref{utcor} we can find an ordered orthonormal basis
${\bf w} = (w_1, \ldots, w_n)$ of $\mathbb{C}^n$ with respect to
which the matrix of every operator in $\mathcal{A}$ is upper triangular.
Then $\mathcal{E}_0 = {\rm span}\{w_1, \ldots, w_{n-1}\}$ is an invariant
subspace, so we can inductively assume that $\mathcal{E}_0$ has a block
ordered basis ${\bf v}_0$ with respect to which the compression of
$\mathcal{A}$ to $\mathcal{E}_0$ distinguishes blocks and every element
of which is Jordanesque.

Fix $A \in \mathcal{A}$ as in Lemma \ref{stlemma}, relative to the ${\bf w}$
basis. Now the $(n,n)$ entry of $A$ for the ${\bf w}$ basis may equal at
least one of its other main diagonal entries. If $\lambda$ is this bottom
right entry of $A$, then this means that the entries of $A$ in the main
diagonal of exactly one of the blocks of the ${\bf v}_0$ basis are equal to
$\lambda$. The other possibility is that $\lambda$ is distinct from all other
main diagonal entries (i.e., eigenvalues) of $A$.

In any case, since distinct blocks of ${\bf v}_0$ can be interchanged without
consequence, we can assume that ${\bf v}_0 = (v^1_1, \ldots, v^1_{n_1},
\ldots, v^k_1, \ldots, v^k_{n_k-1})$ where $n_1 + \cdots + n_k = n$ and
the main diagonal entries of $A$ in the final block are all $\lambda$. The
case where $\lambda$ is distinct from the other eigenvalues of $A$ is
accommodated by allowing the possibility that $n_k = 1$.

Next, let $\mathcal{F}$ be the span of
${\bf v}_1 = (v^1_1, \ldots, v^1_{n_1}, \ldots, v^{k-1}_1, \ldots,
v^{k-1}_{n_{k-1}}, w_n)$ (omitting the $k$th block)
and let $P$ be the natural projection onto $\mathcal{F}$ with kernel
${\rm span}\{v^k_1, \ldots, v^k_{n_k - 1}\}$. Then $PAP \in B(\mathcal{F})$
has an upper triangular matrix for the ${\bf v}_1$ basis, and its bottom right
entry is $\lambda$, and no other main diagonal entry takes this value. Thus
$\lambda$ is an eigenvalue of $PAP$. Let $v^k_{n_k} \in \mathcal{F}$ be an
eigenvector for this eigenvalue, so that $PAv^k_{n_k} = \lambda v^k_{n_k}$.
This implies that
$Av^k_{n_k} \in {\rm span}\{v^k_1, \ldots, v^k_{n_k}\}$, so the matrix
of $A$ is Jordanesque with respect to the basis ${\bf v} =
(v^1_1, \ldots, v^1_{n_1}, \ldots, v^k_1, \ldots, v^k_{n_k})$.

We must show that the matrix of every $B \in \mathcal{A}$ is Jordanesque
with respect to the ${\bf v}$ basis. That is, the matrix of $B$ with respect
to this basis must be zero in all but the last $n_k$ entries of its final
column. (We already have, inductively, that the upper left $(n-1)\times(n-1)$
corner of this matrix is $(n_1, \ldots, n_{k-1})$-Jordanesque.)

Suppose this fails, and find $B \in \mathcal{A}$ whose final column
$Bv^k_{n_k}$ has a nonzero $v^j_i$ component for some $j < k$, but such that
it and all other operators in $\mathcal{A}$ have zero components in the
$v^j_{i+1}, \ldots, v^j_{n_j}, \ldots, v^{k-1}_1, \ldots, v^{k-1}_{n_{k-1}}$
entries. That is, $v^j_i$ is the highest index where Jordanesqueness of some
operator in $\mathcal{A}$ fails. Then $\mathcal{E}_1 =
{\rm span}\{v^1_1, \ldots, v^1_{n_1}, \ldots, v^j_1, \ldots,
v^j_i, v^k_1, \ldots, v^k_{n_k}\}$ (i.e., ${\bf v}$ with all entries between
$v^j_i$ and $v^k_1$ omitted) and $\mathcal{E}_2 =
{\rm span}\{v^1_1, \ldots, v^1_{n_1}, \ldots, v^j_1, \ldots, v^j_{i-1},
v^k_1, \ldots, v^k_{n_k-1}\}$ (i.e., the same list but also omitting $v^j_i$
and $v^k_{n_k}$) are both invariant for $\mathcal{A}$, and so their orthogonal
difference $\mathcal{E} = \mathcal{E}_1 \ominus \mathcal{E}_2$ is
semi-invariant. This subspace is two-dimensional and
${\rm span}\{v^j_i, v^k_{n_k}\}$ is a companion subspace.
Let $Q$ be the natural projection onto ${\rm span}\{v^j_i, v^k_{n_k}\}$.
Then $QAQ$ is diagonal, with
distinct real main diagonal entries, for the $\{v^j_i, v^k_{n_k}\}$ basis;
$QIQ$ is diagonal for the same basis with diagonal entries $1$ and $1$; and
$QBQ$ has a nonzero entry in the $(1,2)$ corner. So $Q\mathcal{A}Q$ has
dimension at least $3$, which by Proposition \ref{compprop}
means that the compression of $\mathcal{A}$ to
$\mathcal{E}$ has dimension at least $3$, and it is therefore not antisymmetric
by Proposition \ref{dimcount}. This contradicts hereditary antisymmetry of
$\mathcal{A}$, and we conclude that the matrix of every operator in
$\mathcal{A}$ for the ${\bf v}$ basis must be Jordanesque.
\end{proof}

Thus, Corollary \ref{bigdiag} applies to any hereditarily antisymmetric
operator algebra in $M_n$.

The hypothesis of Theorem \ref{utthm} (iii) does not suffice to imply the
conclusion that $\mathcal{A}$ can be made Jordanesque. For example, the
set of all upper triangular matrices in $M_n$ has no full subquotients
of dimension greater than $1$ (by Theorem \ref{utthm}, or by
inspection), yet it cannot be put in Jordanesque form: its dimension is
greater than the dimension of any $\mathcal{J}_{\bf v}$ in $M_n$.

We have a rather explicit characterization of the maximal hereditarily
antisymmetric operator algebras.

\begin{coro}
The maximal hereditarily antisymmetric subalgebras of $M_n$ are precisely
the algebras $\mathcal{J}_{\bf v}$ for ${\bf v}$ a normalized suitably
nonorthogonal block ordered basis of $\mathbb{C}^n$.
\end{coro}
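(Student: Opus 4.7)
The plan is to combine Theorem \ref{mainthm} with Theorem \ref{dvthm}. One direction is immediate: if $\mathcal{A}$ is a maximal hereditarily antisymmetric subalgebra of $M_n$, Theorem \ref{mainthm} places $\mathcal{A}$ inside some $\mathcal{J}_{\bf v}$ with ${\bf v}$ normalized and suitably nonorthogonal, and since $\mathcal{J}_{\bf v}$ is itself hereditarily antisymmetric by Theorem \ref{dvthm}, maximality forces $\mathcal{A} = \mathcal{J}_{\bf v}$.

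For the converse direction---that each such $\mathcal{J}_{\bf v}$ is maximal---suppose $\mathcal{B}$ is hereditarily antisymmetric with $\mathcal{J}_{\bf v} \subseteq \mathcal{B}$. A second application of Theorem \ref{mainthm} yields a normalized suitably nonorthogonal ${\bf v}'$ with $\mathcal{B} \subseteq \mathcal{J}_{{\bf v}'}$, so it suffices to show that the inclusion $\mathcal{J}_{\bf v} \subseteq \mathcal{J}_{{\bf v}'}$ already forces $\mathcal{J}_{\bf v} = \mathcal{J}_{{\bf v}'}$.

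I would establish this in three steps. First, for each $j$ the (possibly nonorthogonal) projection $P_j$ onto the span of the $j$th block of ${\bf v}$ along the other blocks lies in $\mathcal{J}_{\bf v}$, hence in $\mathcal{J}_{{\bf v}'}$. Any projection in $\mathcal{J}_{{\bf v}'}$ must be idempotent and Jordanesque, and a Jordan block with eigenvalue $1$ that is idempotent reduces to the identity via $N(I+N)=0$, while a strictly upper triangular $N$ with $N = N^2$ must be $0$; so the projections in $\mathcal{J}_{{\bf v}'}$ are exactly the sums of block projections of ${\bf v}'$. Writing $P_j$ as such a sum and matching ranges yields a partition $(S_j)$ of $\{1,\ldots,k'\}$ with the $j$th block span of ${\bf v}$ equal to the direct sum of the block spans of ${\bf v}'$ indexed by $S_j$. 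Second, the full nilpotent shift $N_j$ defined by $N_j v^j_i = v^j_{i-1}$ (and zero outside the $j$th block) lies in $\mathcal{J}_{\bf v}$ and has rank $n_j - 1$; as an element of $\mathcal{J}_{{\bf v}'}$ its restriction to each block of ${\bf v}'$ in $S_j$ is strictly upper triangular, so its total rank is at most $\sum_{j' \in S_j}(n'_{j'}-1) = n_j - |S_j|$, forcing $|S_j| = 1$ and giving a block-preserving bijection between ${\bf v}$ and ${\bf v}'$. Third, within each block the ${\bf v}$-flag is the unique maximal flag of invariant subspaces of the single Jordan block $N_j$, so the matching ${\bf v}'$-flag must coincide with it. Since $\mathcal{J}_{\bf v}$ and $\mathcal{J}_{{\bf v}'}$ restricted to each block consist of the flag-preserving operators with a single eigenvalue---a condition depending only on the flag---the two algebras agree on every block, giving $\mathcal{J}_{\bf v} = \mathcal{J}_{{\bf v}'}$. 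I expect the rank comparison in step two, together with the flag-uniqueness check in step three, to be the main obstacle.
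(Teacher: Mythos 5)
Your proposal is correct, and its first half is exactly the paper's argument: the paper disposes of the whole corollary in one sentence, citing Theorem \ref{mainthm} for containment of any hereditarily antisymmetric algebra in some $\mathcal{J}_{\bf v}$ and Theorem \ref{dvthm} for hereditary antisymmetry of each such $\mathcal{J}_{\bf v}$. What you add is a proof that no $\mathcal{J}_{\bf v}$ of this kind can be properly contained in another $\mathcal{J}_{{\bf v}'}$, and this is precisely the point the paper's deduction glosses over: from the two cited theorems alone one only learns that a non-maximal $\mathcal{J}_{\bf v}$ would sit properly inside some $\mathcal{J}_{{\bf v}'}$, and ruling that out is not a formal consequence of those theorems. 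Your three-step rigidity argument does rule it out, and each step checks: the idempotents of $\mathcal{J}_{{\bf v}'}$ are exactly the sums of its block projections (each block of an idempotent has diagonal $0$ or $1$, and $N=N^2$ or $N(I+N)=0$ with $N$ nilpotent forces $N=0$), so the ${\bf v}$-blocks decompose as direct sums of ${\bf v}'$-blocks; the rank estimate $n_j-1\leq\sum_{j'\in S_j}(n'_{j'}-1)=n_j-|S_j|$ for the cyclic nilpotent $N_j$ forces $|S_j|=1$, matching the blocks bijectively; and since a cyclic nilpotent on an $m$-dimensional space has a unique invariant subspace of each dimension, the two flags in each block coincide, while upper triangularity with constant diagonal depends only on the flag (the diagonal entries are the scalars induced on the successive quotients). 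So your proof follows the paper's skeleton but supplies, and verifies, a non-nesting fact that the published one-line proof implicitly assumes; it is, if anything, the more complete argument.
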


This follows from Theorems \ref{mainthm} and \ref{dvthm}: the former
shows that every hereditarily antisymmetric algebra is contained in such
an algebra, and the latter shows that every such algebra is hereditarily
antisymmetric.

\section{``Quantum'' posets}

I am proposing that unital hereditarily antisymmetric operator algebras may
be regarded as ``quantum posets''. The qualifier ``quantum'' is justified
both by the direct physical interpretation discussed in Section 1, and
on the grounds of kinship with other objects, such as quantum graphs and
quantum metrics, which have similar physical interpretations \cite{DSW, KW}.

As I mentioned in the introduction, the nilpotent part of a hereditarily
antisymmetric operator algebra (see Corollary \ref{bigdiag} plus Theorem
\ref{mainthm}) plays the role of a strict order, i.e., it is the quantum
version of $\prec$ rather than $\preceq$. Since every {\it nilpotent operator
algebra}  --- every operator algebra consisting solely of nilpotent matrices
--- is hereditarily antisymmetric, I am also proposing that, in finite
dimensions, nilpotent operator algebras are ``strict quantum orders''.

In order to give this proposal substance, we need some nontrivial results
about operator algebras which are analogous to known
results about posets. In this section I will prove operator algebraic analogs
of the theorems of Mirsky (the maximal length of a chain equals the minimal
size of a decomposition into antichains) and Dilworth (the maximal width of an
antichain equals the minimal size of a decomposition into chains) for posets.

First, we need to identify operator algebraic analogs of chains and antichains.

\begin{defi}
A {\it quantum antichain} for a nilpotent operator algebra $\mathcal{A}
\subset M_n$ is a nonzero semi-invariant subspace $\mathcal{E} \subseteq
\mathbb{C}^n$ whose corresponding subquotient $P\mathcal{A}P$ equals $\{0\}$.
Its {\it width} is the dimension of $\mathcal{E}$. A {\it quantum chain} for
$\mathcal{A}$ is a sequence of nonzero vectors $\mathcal{C} =
(v_1, \ldots, v_k)$ in $\mathbb{C}^n$ with $v_{i+1} \in \mathcal{A}v_i$ for
$1 \leq i < k$. Its {\it length} is $k$.
\end{defi}

In the definition of quantum chains, we want $\mathcal{A}$ not to contain
any nonzero projections, i.e., to be nilpotent, in order to express the
idea that chains are strictly descending, not merely descending.
Note that since $\mathcal{A}$ is an algebra, we actually get
$v_j \in \mathcal{A}v_i$ whenever $i < j$.

It is convenient to know that in the definition of quantum antichains,
semi-invariance of $\mathcal{E}$ is automatic.

\begin{prop}\label{acprop2}
Let $\mathcal{A} \subset M_n$ be a nilpotent operator algebra, let
$\mathcal{E}$ be a subspace of $\mathbb{C}^n$, and let $P$ be the orthogonal
projection onto $\mathcal{E}$. If $P\mathcal{A}P = \{0\}$ then $\mathcal{E}$
is a quantum antichain for $\mathcal{A}$.
\end{prop}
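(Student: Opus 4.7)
The plan is to verify the only remaining condition needed for $\mathcal{E}$ to qualify as a quantum antichain, namely semi-invariance for $\mathcal{A}$; the vanishing of the corresponding subquotient is exactly the hypothesis. The natural tool is the characterization recalled in Section 2 from Theorem 2.16 of \cite{D}: a closed subspace $\mathcal{E}$ is semi-invariant for $\mathcal{A}$ if and only if the compression map $A \mapsto PAP$ is an algebra homomorphism, i.e.\ $PABP = (PAP)(PBP)$ for all $A, B \in \mathcal{A}$.

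The verification is essentially immediate once that criterion is invoked. Fix $A, B \in \mathcal{A}$. The right-hand side $(PAP)(PBP)$ is zero because both factors lie in $P\mathcal{A}P = \{0\}$. For the left-hand side, $AB \in \mathcal{A}$ since $\mathcal{A}$ is an algebra, so $PABP \in P\mathcal{A}P = \{0\}$ as well. Hence the homomorphism identity $PABP = (PAP)(PBP)$ holds trivially, and $\mathcal{E}$ is semi-invariant. Combined with $P\mathcal{A}P = \{0\}$ and the nonzero-ness of $\mathcal{E}$ (implicit to the statement, since the zero subspace is not of interest), this is the definition of a quantum antichain.

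There is no real obstacle here; the proposition is really just the observation that semi-invariance need not be checked separately when constructing quantum antichains, since the stronger condition $P\mathcal{A}P = \{0\}$ already forces the homomorphism identity to hold vacuously on both sides. Interestingly, nilpotency of $\mathcal{A}$ is not used in the argument at all; it enters only because the very notion of quantum antichain was defined under that standing hypothesis.
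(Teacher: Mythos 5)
Your proof is correct and follows exactly the paper's route: both invoke Theorem 2.16 of \cite{D} and observe that the zero map is trivially a homomorphism, so $P\mathcal{A}P=\{0\}$ forces semi-invariance. Your remark that nilpotency is never used is also consistent with the paper, which only assumes it so that the term ``quantum antichain'' is defined.
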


\begin{proof}

This can be verified directly, but the quick way to see it is to invoke
Theorem 2.16 of \cite{D}, which states that $\mathcal{E}$ is semi-invariant
if and only if the map $A \mapsto PAP$ is a homomorphism from $\mathcal{A}$
to $B(\mathcal{E})$. If $P\mathcal{A}P = \{0\}$ then this map must be
the zero homomorphism.
\end{proof}

Next, we need matrix versions of the idea of partitioning a poset into chains
or antichains.

\begin{defi}
Let $\mathcal{A} \subset M_n$ be a nilpotent operator algebra.
\smallskip

\noindent (a) A {\it partition into quantum antichains} is an orthogonal
decomposition $\mathbb{C}^n = \mathcal{E}_1 \oplus \cdots \oplus \mathcal{E}_k$
where each $\mathcal{E}_i$ is a quantum antichain. Its {\it size} is $k$. It
is {\it ordered} if $\mathcal{E}_1 \oplus \cdots \oplus \mathcal{E}_i$ is
invariant for each $1 \leq i \leq k$.
\smallskip

\noindent (b) A family of quantum chains $\mathcal{C}_1, \ldots, \mathcal{C}_k$
{\it spans} $\mathbb{C}^n$ if the span of $\bigcup_{i=1}^k \mathcal{C}_i$
equals $\mathbb{C}^n$. A {\it partition into quantum chains} is a family of
quantum chains for which this union constitutes a basis for $\mathbb{C}^n$.
Its {\it size} is $k$.
\end{defi}

Intuitively, a partition into quantum antichains is ordered if $\mathcal{E}_i$
is ``below'' $\mathcal{E}_j$ whenever $i < j$.
Classically, given two disjoint antichains $\mathcal{C}_1$ and $\mathcal{C}_2$
in a poset, we can always perform a swap: let $\mathcal{C}_1'$ be the set of
$x \in \mathcal{C}_1$ which lie above some element of $\mathcal{C}_2$, let
$\mathcal{C}_2'$ be the set of $y \in \mathcal{C}_2$ which lie below some
element of $\mathcal{C}_1$, and replace $\mathcal{C}_1$ and $\mathcal{C}_2$
with the antichains $(\mathcal{C}_1\setminus\mathcal{C}_1') \cup
\mathcal{C}_2'$ and $(\mathcal{C}_2\setminus \mathcal{C}_2')\cup
\mathcal{C}_1'$. Then no element of the first new antichain lies above
any element of the second new antichain. Using this trick repeatedly, any
partition into antichains can classically be converted into an ordered
partition without changing its size. However, nothing like this is true in
the quantum setting; compare Theorem \ref{qmir} and Example \ref{nomir} below.

Any family of quantum chains which spans $\mathbb{C}^n$ can be turned into a
partition by removing selected elements. This follows from the fact that any
subset of a quantum chain is a quantum chain --- this is a consequence of the
earlier comment that $v_j \in \mathcal{A}v_i$ whenever $i < j$ ---
so we can simply remove excess elements until there is no linear dependence.
I record this fact:

\begin{prop}\label{chainspan}
Let $\mathcal{A} \subset M_n$ be a nilpotent
operator algebra and let $\mathcal{C}_1, \ldots, \mathcal{C}_k$ be a family
of quantum chains which spans $\mathbb{C}^n$. Then there is a partition into
quantum chains $\mathcal{C}_1', \ldots, \mathcal{C}_{k'}'$ with $k' \leq k$
and each $\mathcal{C}_i'$ contained in some $\mathcal{C}_j$.
\end{prop}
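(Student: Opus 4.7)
The plan is to exploit the hint in the paragraph immediately preceding the statement: any ordered subsequence of a quantum chain is again a quantum chain. First I would verify this carefully. If $\mathcal{C} = (v_1, \ldots, v_m)$ is a quantum chain, meaning $v_{s+1} \in \mathcal{A} v_s$ for each $s$, then for any $i < j$ we can write $v_{i+1} = A_1 v_i$, $v_{i+2} = A_2 v_{i+1}$, \ldots, $v_j = A_{j-i} v_{j-1}$ with each $A_t \in \mathcal{A}$; composing gives $v_j = (A_{j-i} \cdots A_1) v_i$, and the product lies in $\mathcal{A}$ because $\mathcal{A}$ is an algebra. Hence for any subset of indices $i_1 < i_2 < \cdots < i_\ell$, the subsequence $(v_{i_1}, \ldots, v_{i_\ell})$ satisfies $v_{i_{s+1}} \in \mathcal{A} v_{i_s}$ for each $s$, so it is itself a quantum chain provided all the $v_{i_s}$ are nonzero (which is automatic, since by definition the entries of a quantum chain are nonzero).

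Next I would extract a basis. The hypothesis says that $\bigcup_i \mathcal{C}_i$ spans $\mathbb{C}^n$, so by a standard linear algebra argument there is a subset $B$ of $\bigcup_i \mathcal{C}_i$ that is a vector space basis of $\mathbb{C}^n$. For each $j$, set $\mathcal{C}_j' = \mathcal{C}_j \cap B$, endowed with the order inherited from $\mathcal{C}_j$. By the preceding paragraph, each nonempty $\mathcal{C}_j'$ is a quantum chain, and by construction $\mathcal{C}_j' \subseteq \mathcal{C}_j$. Discarding the indices $j$ for which $\mathcal{C}_j' = \emptyset$ and relabeling, we obtain quantum chains $\mathcal{C}_1', \ldots, \mathcal{C}_{k'}'$ with $k' \leq k$. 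Their union is exactly $B$, which is a basis of $\mathbb{C}^n$, so this family is a partition into quantum chains in the sense of the definition.

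There is no real obstacle here; the only thing one might want to double-check is the possibility that an element appears in more than one $\mathcal{C}_j$, which is not explicitly excluded by the definition of ``family of quantum chains''. If that occurs, one simply declares each repeated vector to belong to only one $\mathcal{C}_j'$ (its first occurrence, say) when forming the partition, which preserves the property that each $\mathcal{C}_j'$ is an ordered subset of some $\mathcal{C}_j$ and hence still a quantum chain. This completes the proof.
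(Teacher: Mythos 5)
Your argument is correct and is essentially the paper's own: the paper justifies this proposition by the same two observations, namely that any (ordered) subset of a quantum chain is again a quantum chain because $v_j \in \mathcal{A}v_i$ whenever $i < j$, and that one can therefore prune a spanning union of chains down to a basis. Your write-up just makes the basis-extraction and the handling of repeated vectors explicit.
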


(We might have $k' < k$ if some quantum chains disappear entirely in the
pruning process.)

Now we can prove the ``quantum'' Mirsky's theorem. Define
$\mathcal{A}^i(\mathbb{C}^n)$ to be the span of $\{A_i\cdots A_1v:
A_1, \ldots, A_i \in \mathcal{A}$, $v \in \mathbb{C}^n\}$, and set
$\mathcal{A}^0(\mathbb{C}^n) = \mathbb{C}^n$. If $k$ is the smallest value for
which $\mathcal{A}^k = \{0\}$, then the orthogonal differences
$$\mathcal{A}^{k-1}(\mathbb{C}^n) \oplus
(\mathcal{A}^{k-2}(\mathbb{C}^n) \ominus \mathcal{A}^{k-1}(\mathbb{C}^n))
\oplus\cdots\oplus
(\mathcal{A}^0(\mathbb{C}^n) \ominus \mathcal{A}^1(\mathbb{C}^n))$$
form an ordered partition into quantum antichains. I will call this the
{\it top down partition}. (There is also a {\it bottom up} partition
$\mathcal{E}_1 \oplus (\mathcal{E}_2\ominus \mathcal{E}_1) \oplus \cdots
\oplus (\mathcal{E}_k \ominus \mathcal{E}_{k-1})$ where $\mathcal{E}_1 =
\{v \in \mathbb{C}^n: Av = 0$ for all $A \in \mathcal{A}\}$ and inductively
$\mathcal{E}_{i+1} = \{v \in \mathbb{C}^n: Av \in \mathcal{E}_i$ for all
$A \in \mathcal{A}\}$. But we will not need this.)

\begin{theo}\label{qmir}
Let $\mathcal{A} \subset M_n$ be a nilpotent operator algebra. Then the
maximal length of a quantum chain equals the minimal size of an ordered
partition into quantum antichains.
\end{theo}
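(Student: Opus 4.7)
My plan is to use the ``top down partition'' already introduced in the paragraph preceding the theorem as the canonical ordered partition into quantum antichains whose size realizes the minimum. Let $k$ be the least integer with $\mathcal{A}^k(\mathbb{C}^n) = \{0\}$, set $\mathcal{V}_i = \mathcal{A}^{k-i}(\mathbb{C}^n)$ for $0 \le i \le k$, and let $\mathcal{E}_i = \mathcal{V}_i \ominus \mathcal{V}_{i-1}$. I will split the proof into the three natural pieces: (1) the top down partition really is an ordered partition into quantum antichains of size $k$; (2) the maximum length of a quantum chain is at least $k$; (3) for any ordered partition into quantum antichains of size $j$ and any quantum chain of length $m$, $m \le j$.

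For (1), observe that $\mathcal{V}_i = \mathcal{E}_1 \oplus \cdots \oplus \mathcal{E}_i$ and $\mathcal{A}\cdot\mathcal{V}_i = \mathcal{A}^{k-i+1}(\mathbb{C}^n) = \mathcal{V}_{i-1} \subseteq \mathcal{V}_i$, so the partition is ordered. To see that each $\mathcal{E}_i$ is a quantum antichain, let $P_i$ be the orthogonal projection onto $\mathcal{E}_i$ and take $v \in \mathcal{E}_i \subseteq \mathcal{V}_i$ and $A \in \mathcal{A}$; then $Av \in \mathcal{V}_{i-1}$, so $P_i A v = 0$, hence $P_i \mathcal{A} P_i = \{0\}$, and by Proposition \ref{acprop2} this is already enough to conclude $\mathcal{E}_i$ is semi-invariant. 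For (2), by the choice of $k$ we can pick $v \in \mathbb{C}^n$ and $A_1, \dots, A_{k-1} \in \mathcal{A}$ with $A_{k-1}\cdots A_1 v \neq 0$; setting $v_1 = v$ and $v_{i+1} = A_i v_i$ yields a quantum chain whose last vector is nonzero, hence every $v_i$ is nonzero, producing a chain of length $k$.

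The heart of the argument is (3), which is the quantum analog of the classical level-function proof. Given an ordered partition $\mathbb{C}^n = \mathcal{F}_1 \oplus \cdots \oplus \mathcal{F}_j$ into quantum antichains (with each $\mathcal{F}_1 \oplus \cdots \oplus \mathcal{F}_i$ invariant) and a quantum chain $(v_1, \dots, v_m)$, I define $\pi(v) \in \{1, \dots, j\}$ to be the smallest index $i$ with $v \in \mathcal{F}_1 \oplus \cdots \oplus \mathcal{F}_i$, and I will show $\pi(v_1) > \pi(v_2) > \cdots > \pi(v_m)$, which forces $m \le j$. Fix $\ell$ and let $i = \pi(v_\ell)$; decompose $v_\ell = u + w$ with $u \in \mathcal{F}_1 \oplus \cdots \oplus \mathcal{F}_{i-1}$ and $w \in \mathcal{F}_i$, where necessarily $w \neq 0$ by the definition of $\pi$. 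Choose $A \in \mathcal{A}$ with $v_{\ell + 1} = A v_\ell$. Invariance of $\mathcal{F}_1 \oplus \cdots \oplus \mathcal{F}_{i-1}$ gives $Au \in \mathcal{F}_1 \oplus \cdots \oplus \mathcal{F}_{i-1}$, while invariance of $\mathcal{F}_1 \oplus \cdots \oplus \mathcal{F}_i$ together with the fact that $\mathcal{F}_i$ is a quantum antichain (so $Q_i A Q_i = 0$, where $Q_i$ is the orthogonal projection onto $\mathcal{F}_i$) shows $Q_i A w = Q_i A Q_i w = 0$, so $A w \in \mathcal{F}_1 \oplus \cdots \oplus \mathcal{F}_{i-1}$ as well. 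Hence $v_{\ell + 1} = Au + Aw \in \mathcal{F}_1 \oplus \cdots \oplus \mathcal{F}_{i-1}$, giving $\pi(v_{\ell + 1}) \le i - 1 < \pi(v_\ell)$.

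Putting the three parts together yields $k \le \text{(max chain length)} \le \text{(min ordered partition size)} \le k$. The step I expect to require the most care is the strict-decrease assertion in (3): one has to exploit both that initial sums are invariant (to control the component in the lower block) and that each individual block is a quantum antichain (to control the new component in the current block). Everything else is a direct unwinding of definitions combined with the fact that $\mathcal{A}^{k-1}(\mathbb{C}^n)\ne \{0\}$.
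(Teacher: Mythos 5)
Your proof is correct and takes essentially the same route as the paper: the strict-decrease of your level function $\pi$ is just a repackaging of the paper's inductive containment $v_j \in \mathcal{E}_1 \oplus \cdots \oplus \mathcal{E}_{k+1-j}$ (both combine invariance of the initial sums with $Q_iAQ_i = 0$), and both arguments realize the bound with the top down partition and a chain built from a nonvanishing product $A_{k-1}\cdots A_1 v$. Your part (1) merely makes explicit what the paper asserts in the paragraph preceding the theorem.
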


\begin{proof}
Fix an ordered partition $\mathcal{E}_1 \oplus \cdots \oplus \mathcal{E}_k$
of $\mathbb{C}^n$ into quantum antichains, with $k$ minimal.
Given any quantum chain $(v_1, \ldots, v_l)$, we have
$v_1 \in \mathcal{E}_1 \oplus \cdots \oplus \mathcal{E}_k = \mathbb{C}^n$,
and inductively, since $v_{j+1} \in \mathcal{A}\, v_j$ for all $j$,
and $\mathcal{A}(\mathcal{E}_j) \subseteq \mathcal{E}_1 \oplus
\cdots \oplus \mathcal{E}_{j-1}$ for all $j$, we
have $v_j \in \mathcal{E}_1 \oplus \cdots \oplus \mathcal{E}_{k + 1 - j}$.
This implies that the length of the quantum chain is at most $k$. We have
shown that no quantum chain has length greater than $k$.

For the converse, let $r$ be the smallest value for which $\mathcal{A}^r
= \{0\}$ and let $\mathcal{A}^{r-1}(\mathbb{C}^n) \oplus
(\mathcal{A}^{r-2}(\mathbb{C}^n) \ominus \mathcal{A}^{r-1}(\mathbb{C}^n))
\oplus\cdots\oplus
(\mathcal{A}^0(\mathbb{C}^n) \ominus \mathcal{A}^1(\mathbb{C}^n))$
be the top down partition into quantum antichains. Its size is $r$.
Since $\mathcal{A}^{r-1} \neq \{0\}$ there exists $v \in \mathbb{C}^n$ and
$A_1, \ldots, A_{r-1} \in \mathcal{A}$ such that $A_{r-1}\cdots A_1v \neq 0$.
Thus $(v, A_1v, \ldots, A_{r-1}\cdots A_1v)$ is a quantum chain of length
$r$. This shows that the maximal length of a quantum chain is at least as
large as the minimal size of an ordered partition into quantum antichains.
\end{proof}

Incidentally, this proof shows that the top down partition has minimal size
among all ordered partitions. As essentially the same proof would work with
the bottom up partition, it too has minimal size.

The proof of Theorem \ref{qmir} corresponds to an easy proof of the
classical theorem of Mirsky. Given any finite poset, let $\mathcal{C}_1$
be the set of maximal elements, let $\mathcal{C}_2$ be the set of new
maximal elements after $\mathcal{C}_1$ is removed, and so on. This yields
a partition into antichains such that every element of $\mathcal{C}_{i+1}$
lies under some element of $\mathcal{C}_i$, and one can then build a chain
whose length is the size of this partition
by starting with any element of the bottommost antichain and working up.

I mentioned earlier that in the quantum setting, arbitrary partitions into
quantum antichains cannot necessarily be converted into ordered partitions.
In fact, Theorem \ref{qmir} fails for unordered partitions.

\begin{exam}\label{nomir}
Let $\mathcal{A}$ be the subalgebra of $M_8$ generated by the matrices
$E_{31} + E_{61}$, $E_{23} + E_{73} - E_{26} + E_{46}$, and
$E_{57} + E_{84} - E_{52} + E_{82}$. Thus, it is the linear span of these
matrices, together with the matrices $E_{41} + E_{71}$, $E_{51} + E_{81}$,
and $E_{56} + E_{83}$. This is a nilpotent algebra, and
$\mathcal{E}_1 = {\rm span}\{e_1, e_2\}$,
$\mathcal{E}_2 = {\rm span}\{e_3, e_4, e_5\}$, and
$\mathcal{E}_3 = {\rm span}\{e_6, e_7, e_8\}$ are all quantum antichains for
$\mathcal{A}$. Thus $\mathcal{E}_1 \oplus \mathcal{E}_2 \oplus \mathcal{E}_3$
is a partition of $\mathbb{C}^8$ into three quantum antichains. But there is
a quantum chain of length 4, namely $(e_1, e_3 + e_6, e_4 + e_7, e_5 + e_8)$.
\end{exam}

The in some sense ``dual'' result to Mirsky's theorem is Dilworth's theorem,
which states that the minimal size of a partition into chains equals the
maximal width of an antichain. Surpisingly, the quantum version of the
trivial direction of this result fails.

\begin{exam}
Let $\mathcal{A} = {\rm span}\{E_{14}, E_{24}, E_{34}\} \subset M_4$. This is
a nilpotent operator algebra. It has a three-dimensional quantum antichain,
namely ${\rm span}\{e_1, e_2, e_3\}$, but it also has a partition into the two
quantum chains $(e_4, e_1)$ and $(e_4 + e_3, e_2)$.
\end{exam}

The harder direction of Dilworth's theorem does hold in the matrix
setting, however. None of the usual proofs successfully transfers to the
matrix setting, but there is a fairly easy linear algebra proof.

\begin{theo}\label{qdil}
Let $\mathcal{A} \subset M_n$ be a nilpotent operator algebra. Then the
minimal size of a partition of $\mathbb{C}^n$ into quantum chains is no
larger than the maximal width of a quantum antichain.
\end{theo}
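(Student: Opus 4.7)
Plan:

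The plan is induction on $n = \dim \mathbb{C}^n$. The base $n=0$ is trivial, and the case $\mathcal{A}=\{0\}$ is handled by partitioning $\mathbb{C}^n$ into its $n$ singleton chains (with $w=n$). So assume $\mathcal{A}\neq\{0\}$; since $\mathcal{A}$ is nilpotent, $\mathcal{A}\mathbb{C}^n$ is a proper invariant subspace. Its orthogonal complement $\mathbb{C}^n\ominus\mathcal{A}\mathbb{C}^n$ is co-invariant (hence semi-invariant) and is killed by $\mathcal{A}$, which maps it into $\mathcal{A}\mathbb{C}^n$ (orthogonal to itself); by Proposition \ref{acprop2} it is therefore a quantum antichain of width $f:=n-\dim\mathcal{A}\mathbb{C}^n$, giving $w\geq f$.

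I would apply the inductive hypothesis to the restricted algebra $\mathcal{A}|_{\mathcal{A}\mathbb{C}^n}$ on the strictly smaller space $\mathcal{A}\mathbb{C}^n$: it is nilpotent, and every quantum antichain there is automatically a quantum antichain for $\mathcal{A}$ on $\mathbb{C}^n$ (the condition $P\mathcal{A}P=0$ transfers verbatim, and semi-invariance comes for free by Proposition \ref{acprop2}), so the max antichain width $w'$ for the restriction satisfies $w'\leq w$. By induction, $\mathcal{A}\mathbb{C}^n$ admits a partition into $m'\leq w'$ quantum chains $\mathcal{C}_1,\ldots,\mathcal{C}_{m'}$.

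The remaining task is to lift this to a partition of $\mathbb{C}^n$ using at most $w$ chains. For each $\mathcal{C}_j$ with top $v_1^{(j)}$ I would try to prepend a vector $v_0^{(j)}\in\mathbb{C}^n$ satisfying $v_1^{(j)}=A^{(j)}v_0^{(j)}$ for some $A^{(j)}\in\mathcal{A}$, turning the sequence into a genuine quantum chain in $\mathbb{C}^n$; any residual directions of $\mathbb{C}^n\ominus\mathcal{A}\mathbb{C}^n$ would be covered by singleton chains. Writing $e$ for the number of extensions and $s=f-e$ for the number of singletons, the total count is $m'+s=m'+f-e$, so one needs $e\geq\max(0,m'+f-w)$; when $m'+f\leq w$ this is automatic (take $e=0$), and otherwise one must produce enough simultaneous extensions.

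The hardest step is precisely this last one in the regime $m'+f>w$: the prepended vectors $v_0^{(j)}$ must be chosen so that each lies in some preimage of $v_1^{(j)}$ under an element of $\mathcal{A}$, and so that their projections to $\mathbb{C}^n\ominus\mathcal{A}\mathbb{C}^n$ are linearly independent and combine with the singletons into a basis of that complement. I would attempt this via a Hall-type matching on a bipartite graph between chains and a basis of $\mathbb{C}^n\ominus\mathcal{A}\mathbb{C}^n$, with edges recording extensibility, and verify Hall's condition by converting any matching deficiency into a concrete quantum antichain of width $>w$ (via a suitable compression), contradicting maximality. This conversion of a combinatorial obstruction into an oversized quantum antichain is where the linear algebra of $\mathcal{A}$ is essential, and where I expect the proof to be genuinely nonclassical, since the combinatorial element-swaps that drive the classical Dilworth proof fail outright in the quantum setting — as witnessed by the preceding example where the easy direction of Dilworth's theorem breaks down.
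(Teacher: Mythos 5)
Your setup is sound as far as it goes: $\mathcal{A}\mathbb{C}^n$ is a proper invariant subspace, its orthocomplement is a quantum antichain by Proposition \ref{acprop2}, antichains for the restricted algebra transfer to antichains for $\mathcal{A}$, so $w \geq \max(f, w')$ and the counting is at least consistent. But the step you yourself flag as hardest is where the proof actually lives, and as sketched it has a genuine gap that I do not think can be closed in this form. First, the top $v_1^{(j)}$ of a chain handed to you by the inductive hypothesis is merely some nonzero vector of $\mathcal{A}\mathbb{C}^n$, which is defined as the \emph{span} of $\{Av\}$; it need not lie in the range of any single operator in $\mathcal{A}$, so there may be no admissible $v_0^{(j)}$ at all. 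Second, even when preimages exist, you need a simultaneous choice whose projections into $\mathbb{C}^n\ominus\mathcal{A}\mathbb{C}^n$ are linearly independent; the candidate sets are unions over $A\in\mathcal{A}$ of affine preimages, not subspaces, so neither Hall's theorem nor its matroid version (Rado) applies directly, and the promised conversion of a ``matching deficiency'' into a quantum antichain of width exceeding $w$ is asserted but not even outlined. Third, and most structurally, your induction fixes the chain partition of $\mathcal{A}\mathbb{C}^n$ before attempting to extend upward, so a bad partition cannot be repaired; to fix this you would have to strengthen the inductive statement to allow perturbing the already-built chains, at which point you are no longer doing a clean induction.

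That last point is exactly where the paper's proof differs. It works top-down rather than bottom-up: it takes the top-down partition $\mathcal{E}_i = \mathcal{A}^i(\mathbb{C}^n)\ominus\mathcal{A}^{i+1}(\mathbb{C}^n)$, sets $d = \max_i \dim\mathcal{E}_i$ (each $\mathcal{E}_i$ is an antichain, so $d \leq w$), and builds $d$ chains of the form $(v_j, A^j_1v_j, \ldots)$ whose $(i+1)$st entries project to a basis of $\mathcal{E}_i$ at every level. The crucial device is that when extending a chain into the next layer one is allowed to perturb the entire chain --- replacing $v_j$ by $v_j+tw$ and each $A^j_s$ by $A^j_s+tB_s$ --- and a derivative-in-$t$ argument (the $(i+1)$st derivative of $tB_i(A^j_{i-1}+tB_{i-1})\cdots(A^j_1+tB_1)(v_j+tw)$ at $t=0$ is a nonzero multiple of $B_i\cdots B_1w$, which escapes the obstructing subspace) shows that arbitrarily small perturbations succeed without disturbing the spanning property at earlier levels. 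This built-in flexibility is precisely what your fixed-by-induction chains lack, and it is why the quantum Dilworth theorem needs, as the paper says, a genuinely nonclassical proof.
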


\begin{proof}
Let $k$ be the largest value such that $\mathcal{A}^k$ is nonzero and let
$\mathcal{E}_i = \mathcal{A}^{i}(\mathbb{C}^n) \ominus
\mathcal{A}^{i+1}(\mathbb{C}^n))$ for $0 \leq i \leq k$,
so that $\mathcal{E}_k \oplus\cdots \oplus \mathcal{E}_0$ is the top down
partition. It will be convenient to have the indices descend in this way.
Let $d$ be the largest of the dimensions of the $\mathcal{E}_i$. We
will find $d$ quantum chains $(v_1, A^1_1v_1, \ldots, A^1_k\cdots A^1_1v_1)$,
$\ldots$, $(v_d, A^d_1v_d, \ldots, A^d_k\cdots A^d_1v_d)$ which span
$\mathbb{C}^n$. By Proposition \ref{chainspan} this is enough.

Let $d_i$ be the dimension of $\mathcal{E}_i$, for $0 \leq i \leq k$.
The goal will be to ensure that the vectors $v_1, \ldots, v_{d_0}$
orthogonally project to a basis of $\mathcal{E}_0$ and, for
$1 \leq i \leq k$, the vectors
$A^1_i\cdots A^1_1v_1$, $\ldots$, $A^{d_i}_i\cdots A^{d_i}_1v_{d_i}$
(the $(i+1)$st vectors in the first $d_i$ quantum chains, which live in
$\mathcal{E}_k \oplus \cdots \oplus \mathcal{E}_i$) orthogonally
project to a basis of $\mathcal{E}_i$. This will suffice because
it immediately implies that the terminal vectors in all the chains span
$\mathcal{E}_k$, and then inductively that the last $i$ vectors in all the
chains span $\mathcal{E}_k \oplus \cdots \oplus \mathcal{E}_{k+1-i}$ for each
$i$. Thus all the vectors in all the chains span $\mathbb{C}^n$.

The construction will be recursive, so that we choose the first $i$ vectors
in each chain before choosing any of the $(i+1)$st vectors. Those
$(i+1)$st vectors will themselves be chosen sequentially.
The key point is that if the vectors
$A^1_i\cdots A^1_1v_1$, $\ldots$, $A^{d_i}_i\cdots A^{d_i}_1v_{d_i}$
orthogonally project to a basis of $\mathcal{E}_i$, then the same will be
true of the vectors $\tilde{A}^1_i\cdots \tilde{A}^1_1\tilde{v}^1$,
$\ldots$, $\tilde{A}^{d_i}_i\cdots \tilde{A}^{d_i}_1\tilde{v}^{d_i}$ for any
$\tilde{A}^r_s$ sufficiently close to $A^r_s$ and $\tilde{v}^r$ sufficiently
close to $v_r$. This means that previous choices can be modified without
affecting the fact that their projections in $\mathcal{E}_j$ span
$\mathcal{E}_j$ for $j < i$, provided the modifications are sufficiently
small.

We can start by letting $v_1, \ldots, v_{d_0}$ be a basis of $\mathcal{E}_0$
and setting $v_i = 0$ for $i > d_0$. Having chosen the first $i$ vectors
in each of the chains, we aim to
choose the $(i+1)$st vectors sequentially, ensuring
that for each $j \leq d_i$ the $(i+1)$st elements of the first $j$ chains
project to a linearly independent set in $\mathcal{E}_i$. When choosing the
$(i+1)$st element of the $j$th chain, i.e., when choosing the operator
$A^j_i$ and possibly making small modifications to $v_j$ and to $A^j_1$,
$\ldots$, $A^j_{i-1}$, we just have to ensure that
the projection of $A^j_i\cdots A^j_1v_j$ into $\mathcal{E}_i$ does not lie
in a certain subspace, namely the span of the projections of the vectors
$A^{j'}_i\cdots A^{j'}_1v_{j'}$ into $\mathcal{E}_i$ for $j' < j$. Call this
span $\mathcal{F}$. To do this, find $w \in \mathbb{C}^n$ and
$B_1, \ldots, B_i \in \mathcal{A}$ such that the projection of
$B_i\cdots B_1w$ into $\mathcal{E}_i$ does not lie in $\mathcal{F}$. This
can be done because $\mathcal{A}^i(\mathbb{C}^n) = \mathcal{E}_k \oplus
\cdots \oplus \mathcal{E}_i$. It will then suffice to show that we can find
arbitrarily small values of $t$ such that the projection of
$$tB_i(A^j_{i-1} + tB_{i-1})\cdots (A^j_1 + tB_1)(v_j + tw)$$
into $\mathcal{E}_i$ does not lie in $\mathcal{F}$. Then we can define
$A^j_i$ to be $tB_i$ and replace $v_j$
with $v_j + tw$, $A^j_1$ with $A^j_1 + tB_1$, etc, and if $t$ is small enough
this will not affect the spanning property at previous stages.

Now if the projection of
$tB_i(A^j_{i-1} + tB_{i-1})\cdots (A^j_1 + tB_1)(v_j + tw)$
into $\mathcal{E}_i$ lies in $\mathcal{F}$ for all sufficiently small $t$,
then all of its derivatives at $t = 0$ must lie in $\mathcal{F}$. But the
$(i+1)$st derivative is $(i+1)B_i\cdots B_1w$, which does not lie in
$\mathcal{F}$, so this is impossible. Thus we are able to find arbitrarily
small values of $t$ which have the desired property.
\end{proof}

\section{Infinite dimensional examples}

Now we turn to the infinite dimensional setting. In infinite dimensions it
is natural to consider operator algebras which are weak* closed in
$B(\mathcal{H})$. These are called {\it dual operator algebras}.
In order to stay within this category, we must slightly modify the
definitions of subobject, quotient, and subquotient used in finite dimensions.

\begin{defi}\label{ihadef}
Let $\mathcal{A} \subseteq B(\mathcal{H})$ be a dual operator algebra and
let $P \in B(\mathcal{H})$ be the othogonal projection onto a closed subspace
$\mathcal{E} \subseteq \mathcal{H}$. Then $\overline{P\mathcal{A}P}^{wk*}$ is
\smallskip

{\narrower{
\noindent (i) a {\it subobject} of $\mathcal{A}$ if $\mathcal{E}$ is
invariant for $\mathcal{A}$;

\noindent (ii) a {\it quotient} of $\mathcal{A}$ if $\mathcal{E}$ is
coinvariant for $\mathcal{A}$;

\noindent (iii) a {\it subquotient} of $\mathcal{A}$ if $\mathcal{E}$ is
semi-invariant for $\mathcal{A}$.
\smallskip

}}
\noindent $\mathcal{A}$ is {\it hereditarily antisymmetric} if every
subquotient of $\mathcal{A}$ is antisymmetric.
\end{defi}

Of course, this definition reduces to Definition \ref{hadef} in the finite
dimensional setting, where weak* considerations become vacuous.

(The compression $P\mathcal{A}P$ is not automatically weak* closed. For
example, let $(x_n)$ be a dense sequence in the open unit disk $\mathbb{D}$
and define $x_0 = 0$ and
$x_{-k} = \frac{1}{k}$ for all $k \in \mathbb{N}$. Then the set $\mathcal{A}$
of sequences $(a_n)$ in $l^\infty(\mathbb{Z})$ with the property that
$a_0 = 0$ and the map
$x_n \mapsto a_n$ extends to a bounded analytic function on $\mathbb{D}$ is a
weak* closed algebra --- see the proof of Proposition \ref{hinfty} below ---
which by the Schwarz lemma satisfies $|a_{-k}| \leq
\frac{1}{k} \sup|a_n|$. The compression of $\mathcal{A}$ to
$l^\infty(-\mathbb{N})$ is therefore contained in $c_0$ and contains all
finite sequences, so it is not weak* closed. In this example
$l^\infty(-\mathbb{N})$ is both invariant and coinvariant. The algebra
$\mathcal{A}$ is not unital, but its unitization has the same property that
its compression to $l^\infty(-\mathbb{N})$ is not weak* closed.)

The basic results from Section 2 go through without significant modification
in infinite dimensions.

\begin{prop}\label{icompofcomp}
Let $\mathcal{A} \subseteq B(\mathcal{H})$ be a dual operator algebra. Then
any subquotient of a subquotient of
$\mathcal{A}$ is a subquotient of $\mathcal{A}$.
\end{prop}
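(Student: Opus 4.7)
The plan is to mimic the finite-dimensional argument of Proposition~\ref{compofcomp}, attending to two places where infinite dimensions require a small adjustment: the closedness of sums of the subspaces that arise, and the passage between $P\mathcal{A}P$ and its weak* closure $\mathcal{B} = \overline{P\mathcal{A}P}^{wk*}$.

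Set up the notation exactly as in the finite-dimensional proof: let $\mathcal{E} = \mathcal{E}_1 \ominus \mathcal{E}_2$ be semi-invariant for $\mathcal{A}$ with $P$ the orthogonal projection onto $\mathcal{E}$, and let $\mathcal{F} = \mathcal{F}_1 \ominus \mathcal{F}_2$ be a semi-invariant subspace for $\mathcal{B}$, with $\mathcal{F}_1, \mathcal{F}_2 \subseteq \mathcal{E}$ closed and invariant for $\mathcal{B}$. The first observation to make is that for any closed subspace $\mathcal{K}$, the condition $T\mathcal{K} \subseteq \mathcal{K}$ is weak* closed in $T$ (it is an intersection of conditions of the form $\langle Tv,w\rangle = 0$ with $v \in \mathcal{K}$ and $w \in \mathcal{K}^\perp$). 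Hence $\mathcal{F}_1$ and $\mathcal{F}_2$ are automatically invariant for $P\mathcal{A}P$ itself, not merely for its weak* closure. With this in hand I would reproduce the finite-dimensional computation: for $w \in \mathcal{F}_1 \subseteq \mathcal{E}$ and $A \in \mathcal{A}$, the vector $Aw$ lies in $\mathcal{E}_1 = \mathcal{E}_2 \oplus \mathcal{E}$, and its $\mathcal{E}$-component is $PAw = (PAP)w \in \mathcal{F}_1$; hence $Aw \in \mathcal{E}_2 + \mathcal{F}_1$. Since $\mathcal{F}_1 \subseteq \mathcal{E} \perp \mathcal{E}_2$, the sum $\mathcal{E}_2 + \mathcal{F}_1 = \mathcal{E}_2 \oplus \mathcal{F}_1$ is a closed orthogonal direct sum, hence a closed subspace invariant for $\mathcal{A}$. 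The same argument works with $\mathcal{F}_2$ in place of $\mathcal{F}_1$, and a direct check gives $(\mathcal{E}_2 \oplus \mathcal{F}_1) \ominus (\mathcal{E}_2 \oplus \mathcal{F}_2) = \mathcal{F}_1 \ominus \mathcal{F}_2 = \mathcal{F}$, exhibiting $\mathcal{F}$ as semi-invariant for $\mathcal{A}$.

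The second, mildly bookkeeping, step is to verify that the algebra one gets as a subquotient of $\mathcal{B}$ at $\mathcal{F}$, namely $\overline{P_\mathcal{F}\mathcal{B}P_\mathcal{F}}^{wk*}$, coincides with the subquotient of $\mathcal{A}$ at $\mathcal{F}$, namely $\overline{P_\mathcal{F}\mathcal{A}P_\mathcal{F}}^{wk*}$. Because $\mathcal{F} \subseteq \mathcal{E}$ gives $P_\mathcal{F} = P_\mathcal{F}P = PP_\mathcal{F}$, we have $P_\mathcal{F}\mathcal{A}P_\mathcal{F} = P_\mathcal{F}(P\mathcal{A}P)P_\mathcal{F}$, so one inclusion is immediate after taking weak* closures; for the other direction, weak* continuity of the map $T \mapsto P_\mathcal{F}TP_\mathcal{F}$ shows that $P_\mathcal{F}\mathcal{B}P_\mathcal{F} \subseteq \overline{P_\mathcal{F}\mathcal{A}P_\mathcal{F}}^{wk*}$, and taking the weak* closure of the left side then gives equality.

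The main obstacle, such as it is, is purely organizational: one must avoid conflating $P\mathcal{A}P$ with $\overline{P\mathcal{A}P}^{wk*}$ at the wrong moment. The key lemma—that invariance of a closed subspace is a weak* closed condition on operators—handles this cleanly, and no further infinite-dimensional machinery is required.
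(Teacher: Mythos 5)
Your proof is correct and follows the same route the paper takes: it runs the finite-dimensional argument of Proposition \ref{compofcomp} and supplies exactly the two facts the paper cites to justify the transfer, namely that an algebra and its weak* closure have the same (semi-)invariant subspaces (your observation that invariance is a weak* closed condition) and that compression is weak*-continuous (your final bookkeeping step identifying the two subquotient algebras). The added remark that $\mathcal{E}_2+\mathcal{F}_1$ is automatically closed because the sum is orthogonal is a worthwhile explicit check that the paper leaves implicit.
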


This works because an operator algebra and its weak* closure have the same
invariant subspaces, and hence the same semi-invariant subspaces, and because
the compression of any weak* convergent net is weak* convergent.

\begin{coro}\label{isubha}
Any subquotient of a hereditarily antisymmetric dual operator
algebra is hereditarily antisymmetric.
\end{coro}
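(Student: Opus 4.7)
The plan is to deduce this immediately from Proposition \ref{icompofcomp}, exactly as the finite dimensional Corollary \ref{subha} is deduced from Proposition \ref{compofcomp}. There is no real content beyond the transitivity of ``subquotient''.

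Let $\mathcal{A} \subseteq B(\mathcal{H})$ be a hereditarily antisymmetric dual operator algebra and let $\mathcal{B} = \overline{P\mathcal{A}P}^{wk*}$ be a subquotient of $\mathcal{A}$, where $P$ is the orthogonal projection onto a semi-invariant subspace $\mathcal{E}$ for $\mathcal{A}$. I would take an arbitrary subquotient $\mathcal{C} = \overline{Q\mathcal{B}Q}^{wk*}$ of $\mathcal{B}$, where $Q$ is the orthogonal projection onto a semi-invariant subspace $\mathcal{F}$ for $\mathcal{B}$, and show that $\mathcal{C}$ is antisymmetric. By Proposition \ref{icompofcomp}, $\mathcal{C}$ is itself a subquotient of $\mathcal{A}$. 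Since $\mathcal{A}$ is hereditarily antisymmetric, $\mathcal{C}$ is antisymmetric. As $\mathcal{C}$ was arbitrary, every subquotient of $\mathcal{B}$ is antisymmetric, so $\mathcal{B}$ is hereditarily antisymmetric.

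There is essentially no obstacle, since the work has already been done in Proposition \ref{icompofcomp}. The only point worth checking is a minor bookkeeping one: when we form $\mathcal{C}$ as a weak* closure of a compression of the weak* closure $\mathcal{B}$, we want to know that $\mathcal{C}$ matches what Proposition \ref{icompofcomp} would produce if we applied it directly, namely $\overline{Q\mathcal{A}Q}^{wk*}$ viewed with respect to an appropriate semi-invariant subspace for $\mathcal{A}$. This is fine because compression by $Q$ is weak* continuous (being a pre-adjoint of a normal map), so $\overline{Q\overline{P\mathcal{A}P}^{wk*}Q}^{wk*} = \overline{QP\mathcal{A}PQ}^{wk*} = \overline{Q\mathcal{A}Q}^{wk*}$, where the last equality uses $QP = PQ = Q$ since $\mathcal{F} \subseteq \mathcal{E}$. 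So the ``$\mathcal{C}$ is a subquotient of $\mathcal{A}$'' assertion output by Proposition \ref{icompofcomp} really does identify our $\mathcal{C}$.
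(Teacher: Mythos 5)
Your proof is correct and follows exactly the route the paper intends: Corollary \ref{isubha} is stated as an immediate consequence of the transitivity of subquotients in Proposition \ref{icompofcomp}, just as Corollary \ref{subha} follows from Proposition \ref{compofcomp} in finite dimensions. Your extra bookkeeping step, that $\overline{Q\overline{P\mathcal{A}P}^{wk*}Q}^{wk*} = \overline{Q\mathcal{A}Q}^{wk*}$ via weak* continuity of compression, is precisely the point the paper flags in its remark after Proposition \ref{icompofcomp} (``the compression of any weak* convergent net is weak* convergent''), so nothing is missing.
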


In infinite dimensions we generalize Definition \ref{compdef} by taking a
companion subspace $\mathcal{F}$ to be any topological complement of
$\mathcal{E}_2$ in $\mathcal{E}_1$. Thus, it is a closed subspace of
$\mathcal{E}_1$ satisfying $\mathcal{E}_2 + \mathcal{F} = \mathcal{E}_1$
and $\mathcal{E}_2 \cap \mathcal{F} = \{0\}$.

\begin{prop}\label{icompprop}
Let $\mathcal{E} = \mathcal{E}_1\ominus\mathcal{E}_2$ be a semi-invariant
subspace for a dual operator algebra $\mathcal{A} \subseteq B(\mathcal{H})$
and let $\mathcal{F}$ be a companion subspace of $\mathcal{E}$. Let $P$ be the
orthogonal projection onto $\mathcal{E}$, let $P_0: \mathcal{F} \to
\mathcal{E}$ be its restriction to $\mathcal{F}$, and let
$Q \in M_n$ be the natural projection onto $\mathcal{F}$. Then
$\Phi: T \mapsto P_0TP_0^{-1}$ defines an isomorphism between
$\overline{Q\mathcal{A}Q}^{wk*} \subseteq B(\mathcal{F})$ and
$\overline{P\mathcal{A}P}^{wk*} \subseteq B(\mathcal{E})$.
\end{prop}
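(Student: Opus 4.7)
The plan is to adapt the finite-dimensional proof of Proposition \ref{compprop} while handling the two new features of the infinite dimensional setting: we need to know that $P_0^{-1}$ is bounded, and we need the map $\Phi$ to respect weak* closures.

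First I would verify that $P_0 \colon \mathcal{F} \to \mathcal{E}$ is a bounded linear bijection with bounded inverse. Boundedness is obvious. Since $\ker(P) \cap \mathcal{E}_1 = \mathcal{E}_2$, injectivity follows from $\mathcal{F} \cap \mathcal{E}_2 = \{0\}$. For surjectivity, any $v \in \mathcal{E} \subseteq \mathcal{E}_1 = \mathcal{F} + \mathcal{E}_2$ can be written as $w + u$ with $w \in \mathcal{F}$ and $u \in \mathcal{E}_2$, and then $P_0 w = Pv - Pu = v$ since $u \in \mathcal{E}_2 \subseteq \ker(P)$. Because $\mathcal{F}$ is a topological complement (hence closed), $\mathcal{F}$ is a Banach space, and the open mapping theorem gives boundedness of $P_0^{-1}$. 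Consequently $\Phi \colon T \mapsto P_0 T P_0^{-1}$ is a bounded algebra isomorphism from $B(\mathcal{F})$ onto $B(\mathcal{E})$, with inverse $S \mapsto P_0^{-1} S P_0$.

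Next I would repeat the computation from the proof of Proposition \ref{compprop} verbatim to show that $\Phi(QAQ) = PAP$ for every $A \in \mathcal{A}$. Nothing in that argument used finite dimensionality; it relied only on the identities ${\rm ker}(P) \cap \mathcal{E}_1 = {\rm ker}(Q) \cap \mathcal{E}_1 = \mathcal{E}_2$ and on $\mathcal{A}$ preserving $\mathcal{E}_2$, both of which still hold. This gives $\Phi(Q\mathcal{A}Q) = P\mathcal{A}P$.

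Finally I would upgrade this set-theoretic equality to an equality of weak* closures. Left multiplication and right multiplication by fixed bounded operators are weak* continuous on $B(\mathcal{F})$ (respectively $B(\mathcal{E})$), since for any trace class operator $S$ on $\mathcal{E}$ one has $\operatorname{tr}(P_0 T P_0^{-1} S) = \operatorname{tr}(T \cdot P_0^{-1} S P_0)$ with $P_0^{-1} S P_0$ trace class on $\mathcal{F}$, and symmetrically for $\Phi^{-1}$. Hence $\Phi$ is a weak* homeomorphism between $B(\mathcal{F})$ and $B(\mathcal{E})$, and therefore
\[
\Phi\bigl(\overline{Q\mathcal{A}Q}^{wk*}\bigr) \;=\; \overline{\Phi(Q\mathcal{A}Q)}^{wk*} \;=\; \overline{P\mathcal{A}P}^{wk*},
\]
which is the desired isomorphism. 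The only genuinely subtle step is justifying that $P_0^{-1}$ is bounded (so that $\Phi$ makes sense as a map into $B(\mathcal{E})$ at all); everything else reduces to the finite-dimensional argument together with standard weak* continuity of multiplication.
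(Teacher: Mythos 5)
Your proof is correct and follows essentially the same route as the paper, which simply reuses the finite-dimensional argument of Proposition \ref{compprop} together with the single additional observation that $P_0^{-1}$ is bounded by the Banach isomorphism theorem. Your write-up just fills in the details the paper leaves implicit, including the weak* continuity of $\Phi$ and $\Phi^{-1}$ needed to pass to weak* closures.
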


The proof of this result requires the one additional observation that
$P_0$ is invertible with bounded inverse by the Banach isomorphism theorem.

Propositions \ref{sainf} and \ref{unitprop} were already stated for possibly
infinite dimensional operator algebras. The infinite dimensional analog of
Proposition \ref{tfaefd} fails, however.

\begin{exam}\label{inormal}
Let $\mathcal{H} = l^2(\mathbb{Z})$ and let $\mathcal{A}$ be the set of
operators in $\mathcal{B}(\mathcal{H})$ whose matrices relative to the
standard basis
of $l^2(\mathbb{Z})$ are upper triangular and constant on every diagonal.
That is, the operators in $\mathcal{A}$ satisfy $\langle Ae_j,e_i\rangle = 0$
when $i > j$ and $\langle Ae_j,e_i\rangle = \langle Ae_{j+1},e_{i+1}\rangle$
for all $i, j \in \mathbb{Z}$.

This is a unital dual operator algebra, and it is antisymmetric
because if $A \in \mathcal{A}$ is self-adjoint then $\langle Ae_j,e_i\rangle
= 0$ for all $i > j$ implies $\langle Ae_j, e_i\rangle = 0$ for all $i < j$,
i.e., $A$ is diagonal and hence a scalar multiple of $I$. But $\mathcal{A}$
contains the bilateral shift operator $U: e_i \mapsto e_{i-1}$, which is
normal and even unitary but not a scalar multiple of the identity. (In fact,
$\mathcal{A}$ is the unital dual operator algebra generated by $U$.)
\end{exam}

Let us look at some infinite dimensional examples of hereditarily
antisymmetric operator algebras. First we generalize the algebras
$\mathcal{T}_n$ of Example \ref{ex1} to infinite dimensions. There are
a variety of ways to do this.

\begin{exam}\label{iex1}
Given a totally ordered set $(X, \preceq)$, define
$\mathcal{T}_X \in B(l^2(X))$ to be the set of operators whose matrix
relative to the standard basis $\{e_x: x \in X\}$ is upper triangular and
constant on the main diagonal. That is, $\langle Ae_y, e_x\rangle = 0$
whenever $y \prec x$ and $\langle Ae_x,e_x\rangle = \langle Ae_y,e_y\rangle$
for all $x,y \in X$.
\end{exam}

\begin{prop}\label{txprop}
For any totally ordered set $(X,\preceq)$ the algebra $\mathcal{T}_X$ is
unital, weak* closed, and hereditarily antisymmetric.
\end{prop}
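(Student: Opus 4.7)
My plan is to dispatch unitality and weak* closedness quickly, then reduce hereditary antisymmetry to a classification of the closed invariant subspaces of $\mathcal{T}_X$. Unitality is immediate because $I$ has constant diagonal and zero off-diagonal entries, so $I \in \mathcal{T}_X$. For weak* closedness, each matrix-coefficient map $A \mapsto \langle Ae_y, e_x\rangle$ is weak* continuous on $B(l^2(X))$ (given by trace pairing with the rank-one trace-class operator $|e_y\rangle\langle e_x|$), and the defining conditions of $\mathcal{T}_X$---the vanishings $\langle Ae_y, e_x\rangle = 0$ for $y \prec x$ together with the equalities $\langle Ae_x, e_x\rangle = \langle Ae_{x_0}, e_{x_0}\rangle$ for a fixed $x_0$ and all $x$---are each weak* closed, so $\mathcal{T}_X$ is their intersection.

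The core step for hereditary antisymmetry is to show that the closed invariant subspaces of $\mathcal{T}_X$ are exactly the subspaces $\mathcal{H}_Y = \overline{\mathrm{span}}\{e_y : y \in Y\}$ for $Y \subseteq X$ a down-set. One direction is direct: for $y \in Y$ and $A \in \mathcal{T}_X$, upper triangularity gives $Ae_y \in \overline{\mathrm{span}}\{e_x : x \preceq y\} \subseteq \mathcal{H}_Y$. Conversely, given a closed invariant subspace $\mathcal{E}$, I will set $Y = \{y \in X : e_y \in \mathcal{E}\}$; since each rank-one matrix $E_{xy}$ with $x \prec y$ lies in $\mathcal{T}_X$, the identity $E_{xy}e_y = e_x$ forces $Y$ to be a down-set, and $\mathcal{H}_Y \subseteq \mathcal{E}$ follows from closedness. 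The delicate step is the reverse inclusion $\mathcal{E} \subseteq \mathcal{H}_Y$, which I will prove by showing $\mathrm{supp}(v) \subseteq Y$ for each $v \in \mathcal{E}$. If $y \in \mathrm{supp}(v)$ is not maximal there, I can pick $y'' \succ y$ in $\mathrm{supp}(v)$ and deduce $e_y \in \mathcal{E}$ from $E_{yy''}v = v_{y''}e_y$. If $y = \max \mathrm{supp}(v)$ happens to exist, every other support point is non-maximal and hence already handled, so the $l^2$-convergent sum $\sum_{y' \in \mathrm{supp}(v),\, y' \prec y} v_{y'}e_{y'}$ lies in $\mathcal{H}_Y \subseteq \mathcal{E}$; subtracting it from $v$ isolates $v_y e_y$ and yields $e_y \in \mathcal{E}$.

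With invariant subspaces identified, the semi-invariant subspaces take the form $\mathcal{H}_Z$ for $Z = Y_1 \setminus Y_2$ a convex subset of $X$ (the difference of two down-sets is convex, and $\mathcal{H}_{Y_1} \ominus \mathcal{H}_{Y_2} = \mathcal{H}_{Y_1 \setminus Y_2}$). A direct matrix inspection shows the compression $P\mathcal{T}_X P$ restricted to $\mathcal{H}_Z$ is contained in $\mathcal{T}_Z$ (the analogously defined algebra for $Z$ with its induced total order), since upper triangularity and the constant-diagonal property restrict row-and-column-wise to any subset of indices. The opening argument shows $\mathcal{T}_Z$ is itself weak* closed, so $\overline{P\mathcal{T}_X P}^{wk*} \subseteq \mathcal{T}_Z$, and $\mathcal{T}_Z$ is antisymmetric because a self-adjoint element is both upper and lower triangular, hence diagonal, and a self-adjoint diagonal matrix with constant diagonal is necessarily a scalar multiple of $I$. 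The main obstacle is the invariant subspace classification---specifically the direction $\mathcal{E} \subseteq \mathcal{H}_Y$, where one must handle supports that lack both a minimum and a maximum (as can occur when $X$ is order-isomorphic to $\mathbb{Z}$ or $\mathbb{R}$); the case split above is designed precisely to address that difficulty.
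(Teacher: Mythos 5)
Your proposal is correct and follows the same route as the paper: identify the semi-invariant subspaces of $\mathcal{T}_X$ as the closed spans of $\{e_x : x \in I\}$ for $I$ an interval, observe that the (weak* closure of the) compression to such a subspace sits inside $\mathcal{T}_I$, and note that $\mathcal{T}_I$ is antisymmetric. The paper states this classification of semi-invariant subspaces without proof, whereas you supply the missing details (the down-set description of invariant subspaces, including the case analysis on whether $\mathrm{supp}(v)$ has a maximum), which is a legitimate and complete filling-in of the same argument.
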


\begin{proof}
Closure under weak* limits is seen by by examining matrix entries.
Hereditary antisymmetry follows from identifying the semi-invariant subpaces
of $\mathcal{T}_X$ as those of the form ${\rm span}\{e_x: x \in I\}$ where
$I$ is an interval in $X$. The compression of $\mathcal{T}_X$ to such a
subspace would simply be $\mathcal{T}_I$, which is still antisymmetric
(and already weak* closed).
\end{proof}

The simplest cases are $X = \mathbb{Z}$, $\mathbb{N}$, and $-\mathbb{N}$
(the negative integers, or equivalently $\mathbb{N}$ with the order
reversed). $\mathcal{T}_{-\mathbb{N}}$ could equivalently be defined to be
the set of bounded operators on $\mathbb{N}$ whose matrix for the standard
basis is lower triangular and constant on the main diagonal. Using the
opposite order on $\mathbb{N}$ effectively interchanges upper and lower
triangular matrices.
Note that $\mathcal{T}_{\mathbb{N}}$ and $\mathcal{T}_{-\mathbb{N}}$ are
different: the former has a minimal invariant subspace which is the range of
the operator $E_{12}$, while the latter has no minimal invariant subspace.

\begin{prop}
$\mathcal{T}_{\mathbb{Z}}$, $\mathcal{T}_{\mathbb{N}}$, and
$\mathcal{T}_{-\mathbb{N}}$ are maximal hereditarily antisymmetric algebras.
\end{prop}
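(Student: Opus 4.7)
The plan is to show that, for each $X \in \{\mathbb{Z}, \mathbb{N}, -\mathbb{N}\}$, any dual operator algebra $\mathcal{A}$ with $\mathcal{T}_X \subsetneq \mathcal{A}$ fails to be hereditarily antisymmetric. Since $\mathcal{A}$ is a subquotient of itself (via $\mathcal{E} = l^2(X)$), it will be enough in almost every case to produce a non-scalar orthogonal projection inside $\mathcal{A}$, which by Proposition \ref{sainf} prevents antisymmetry. The argument parallels the proof of Proposition \ref{ex1maxha} in finite dimensions, with extra care needed because Proposition \ref{tfaefd} fails in infinite dimensions (Example \ref{inormal}). I would separate the two cases according to whether $\mathcal{A}$ contains an operator with a subdiagonal matrix entry relative to $\preceq$.

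In the first case, some $B \in \mathcal{A}$ has a non-zero entry $\langle B e_y, e_x\rangle = \alpha \neq 0$ with $y \prec x$. If there is a pair $(y,x)$ of this kind admitting an intermediate index $m \in X$ with $y \prec m \prec x$, then $E_{m,x}$ and $E_{y,m}$ are strictly upper triangular and hence lie in $\mathcal{T}_X \subseteq \mathcal{A}$; a direct computation shows $E_{m,x} B E_{y,m} = \alpha E_{m,m}$, so $E_{m,m} \in \mathcal{A}$. Otherwise every non-zero subdiagonal entry of $B$ has $x$ as the immediate $\preceq$-successor of $y$, and the strictly lower triangular part of $B$ lies entirely on this ``$1$-subdiagonal''. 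Fix such a pair $(y,x)$; since $E_{y,x} \in \mathcal{T}_X$, the product $B E_{y,x} \in \mathcal{A}$ has only its column $x$ non-zero, equal to $B e_y$, and the $1$-subdiagonal hypothesis forces every entry of $B e_y$ strictly below row $x$ (with respect to $\preceq$) to vanish. Thus $B E_{y,x} = \alpha E_{x,x} + N$ for some strictly upper triangular $N \in \mathcal{T}_X$, and subtracting $N$ gives $\alpha E_{x,x} \in \mathcal{A}$. Either way $\mathcal{A}$ contains a non-trivial orthogonal projection and so is not antisymmetric.

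In the second case no operator in $\mathcal{A}$ has a subdiagonal entry, so every element of $\mathcal{A}$ is upper triangular with respect to $\preceq$; then $\mathcal{A} \supsetneq \mathcal{T}_X$ forces some $A \in \mathcal{A}$ to have non-constant main diagonal, say $\langle Ae_p, e_p\rangle \neq \langle Ae_q, e_q\rangle$ for some $p \prec q$ in $X$. Each invariant subspace $\mathcal{E}_\gamma = \overline{{\rm span}}\{e_x : x \preceq \gamma\}$ of $\mathcal{T}_X$ remains invariant for $\mathcal{A}$, so the finite-dimensional interval subspace ${\rm span}\{e_x : p \preceq x \preceq q\}$ is semi-invariant for $\mathcal{A}$. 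The compression $P\mathcal{A}P$ is then a finite-dimensional operator algebra strictly containing $P\mathcal{T}_X P = \mathcal{T}_n$ (where $n$ is the length of the interval), because $PAP$ has non-constant diagonal and so does not lie in $\mathcal{T}_n$. Proposition \ref{ex1maxha} applied inside $M_n$ then implies that $P\mathcal{A}P$ is not antisymmetric, so $\mathcal{A}$ is not hereditarily antisymmetric.

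The main obstacle I expect is the ``$1$-subdiagonal only'' subcase of the first step, where the two-sided sandwich $E_{m,x} B E_{y,m}$ is unavailable because no interior index exists. The fix is to replace it by the one-sided product $B E_{y,x}$ and use the $1$-subdiagonal hypothesis to verify that the single non-zero column of this product has no lower triangular content below row $x$; once this is checked the remainder is a genuine element of $\mathcal{T}_X$ and can be subtracted off. The remainder of the argument is essentially a routine transcription of Proposition \ref{ex1maxha}, with the weak* closed framework playing no essential role beyond keeping us inside Definition \ref{ihadef}.
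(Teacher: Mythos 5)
Your proof is correct, and although it follows the same broad case division as the paper's argument (a subdiagonal entry with room below it, a subdiagonal entry on the first subdiagonal only, or everything upper triangular with a non-constant main diagonal), the two middle steps are resolved by genuinely different means. The paper splits the first-subdiagonal case in two: if two adjacent subdiagonal entries occur it squares the operator to fall back to the far-subdiagonal case, and if the subdiagonal entry is isolated it compresses to a two-dimensional semi-invariant subspace and rules out antisymmetry by the dimension count of Proposition \ref{dimcount}. Your single argument --- right-multiply $B$ by $E_{y,x} \in \mathcal{T}_X$ to isolate the column $Be_y$, note that the first-subdiagonal hypothesis kills every entry of that column strictly below row $x$, and subtract the strictly upper triangular remainder, which lies in $\mathcal{T}_X$ --- replaces both subcases at once and yields the stronger conclusion that $\mathcal{A}$ itself contains the rank-one orthogonal projection $E_{x,x}$, so no passage to a proper subquotient is needed there. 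In the diagonal case you compress to the whole interval between the two indices where the diagonal differs and invoke the maximality statement of Proposition \ref{ex1maxha}, whereas the paper compresses to a two-dimensional subspace and uses the normality criterion of Proposition \ref{tfaefd}; both are valid, and since these compressions are finite dimensional the weak* closure required by Definition \ref{ihadef} is automatic, so the compression genuinely is the subquotient. The only omission, a cosmetic one, is that the statement also asserts that the three algebras are themselves hereditarily antisymmetric; the paper dispatches this by citing Proposition \ref{txprop}, while your proposal addresses only maximality.
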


\begin{proof}
They are hereditarily antisymmetric by Proposition \ref{txprop}.
For maximality, Let $\mathcal{A}$ be an operator algebra which properly
contains one of these algebras. There are four cases. First, if the matrix
of some operator $A \in \mathcal{A}$ has a nonzero entry $a_{ij}$ with
$i \geq j + 2$, then $E_{i-1,i}, E_{j,i-1} \in \mathcal{A}$ and
$E_{i-1,i}AE_{j,i-1}$ is a nonzero multiple of $E_{i-1,i-1}$. So $\mathcal{A}$
is not even antisymmetric. Second, if some $A \in \mathcal{A}$ has no
nonzero matrix entries more than one diagonal below the main diagonal,
but adjacent nonzero entries $a_{i+1,i}$, $a_{i,i-1}$ on the first
subdiagonal, then $A^2$ has a nonzero $(i+1,i-1)$ entry, reducing to the
first case. Third, if no operator in $\mathcal{A}$ has
nonzero matrix entries more than one diagonal below the main diagonal,
but some $A \in \mathcal{A}$ has a nonzero entry $a_{i+1,i}$, then both
$a_{i+2,i+1}$ and $a_{i,i-1}$ must be zero (if they both exist;
in the $\mathbb{N}$ or $-\mathbb{N}$ settings one of these entries could
be out of range). Moreover, those same entries must be zero for any
$B \in \mathcal{A}$, as otherwise a linear combination of $A$ and $B$
would put us in the second case. It follows that ${\rm span}\{\ldots,
e_{i-1}, e_i, e_{i+1}\}$ and ${\rm span}\{\ldots, e_{i-1}\}$ are both
invariant, and so their orthogonal difference ${\rm span}\{e_i,e_{i+1}\}$ is
semi-invariant. The compression of $\mathcal{A}$ to this two-dimensional
subspace contains the identity matrix, the matrix $E_{i,i+1}$, and the
compression of $A$, which is not upper triangular. So it is at least
three-dimensional and therefore not antisymmetric by Proposition
\ref{dimcount}.

In the final case, every operator in $\mathcal{A}$ is upper triangular but
$\mathcal{A}$ includes an operator $A$ whose main diagonal entries are not
constant. By subtracting a strictly upper triangular operator, we can assume
that $A$ is diagonal. Say $\langle Ae_i,e_i\rangle \neq
\langle Ae_{i+1}, e_{i+1}\rangle$. Then ${\rm span}\{e_i,e_{i+1}\}$ is
semi-invariant, and the compression of $A$ to this subspace
is diagonal but not a scalar multiple of the identity. So it is a non-scalar
normal operator, and this shows that the compression of $\mathcal{A}$ is not
antisymmetric by Proposition \ref{tfaefd}. We have shown that no operator
algebra which properly contains $\mathcal{T}_{\mathbb{Z}}$,
$\mathcal{T}_{\mathbb{N}}$, or $\mathcal{T}_{-\mathbb{N}}$ is hereditarily
antisymmetric.
\end{proof}

It was easier to show that the algebras $\mathcal{T}_n$ are maximal
hereditarily antisymmetric, because these algebras were even maximal
antisymmetric, which is an easier condition to check. However, that fact
relied on Proposition \ref{tfaefd}, which no longer holds in infinite
dimensions. In fact $\mathcal{T}_X$ is never maximal antisymmetric if
$X$ is infinite.

\begin{prop}\label{hinfty}
$l^\infty$ contains an infinite dimensional, weak* closed, unital,
antisymmetric subalgebra.
\end{prop}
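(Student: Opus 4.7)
The plan is to embed the classical Hardy algebra $H^\infty(\mathbb{D})$ of bounded analytic functions on the open unit disk into $l^\infty$ via evaluation on a dense sequence. Fix a sequence $(x_n)_{n \in \mathbb{N}}$ in $\mathbb{D}$ whose range is dense and define $\Phi : H^\infty(\mathbb{D}) \to l^\infty$ by $\Phi(f) = (f(x_n))$. Since analytic functions are continuous, density of $(x_n)$ in $\mathbb{D}$ forces $\sup_n |f(x_n)| = \sup_{z \in \mathbb{D}} |f(z)|$, so $\Phi$ is an isometric, injective, unital algebra homomorphism. Set $\mathcal{A} = \Phi(H^\infty(\mathbb{D}))$; this is automatically a unital, infinite-dimensional subalgebra of $l^\infty$, and it remains to verify weak* closedness and antisymmetry.

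For weak* closedness, suppose $b^{(\alpha)} = \Phi(f^{(\alpha)}) \to b$ weak* in $l^\infty$. Weak* convergent nets are bounded, so the $f^{(\alpha)}$ lie in a fixed ball of $H^\infty$. Invoking the standard duality $H^\infty \cong (L^1/H^1_0)^*$, this ball is weak* compact, so after passing to a subnet $f^{(\alpha)} \to f$ weak* in $H^\infty$. Point evaluation at any $z \in \mathbb{D}$ is weak* continuous on $H^\infty$ (it is represented by the Poisson kernel, which lies in $L^1$ of the circle), and coordinate projections are weak* continuous on $l^\infty$, so passing to the limit in $f^{(\alpha)}(x_n) = b_n^{(\alpha)}$ gives $f(x_n) = b_n$. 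Hence $b = \Phi(f) \in \mathcal{A}$.

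For antisymmetry, suppose both $\Phi(f)$ and its conjugate $\overline{\Phi(f)}$ lie in $\mathcal{A}$, so that $\overline{f(x_n)} = g(x_n)$ for some $g \in H^\infty$ and all $n$. By continuity on a dense set, $\overline{f(z)} = g(z)$ throughout $\mathbb{D}$. Then $f + g = 2\,\mathrm{Re}\, f$ is a real-valued analytic function, hence constant (by the open mapping theorem or Cauchy--Riemann), and similarly $f - g = 2i\,\mathrm{Im}\, f$ is constant. Therefore $f$ is constant, so $\Phi(f)$ is a scalar multiple of the identity of $l^\infty$, as required.

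The main obstacle is the weak* closedness step, since it rests on two nontrivial facts about $H^\infty$, namely the weak* compactness of its unit ball and the weak* continuity of point evaluations; both follow from the Fefferman-style duality $H^\infty \cong (L^1/H^1_0)^*$, which I would cite rather than reprove. Antisymmetry, by contrast, reduces to the elementary observation that a function which is both analytic and anti-analytic on a connected open set must be constant.
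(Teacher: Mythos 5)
Your construction is the same as the paper's (sequences arising as values of a bounded analytic function at a dense sequence in $\mathbb{D}$), and your antisymmetry argument is a correct variant of the paper's. But there is a genuine gap in the weak* closedness step: the claim that ``weak* convergent nets are bounded'' is false. Only weak* convergent \emph{sequences} are automatically bounded (by uniform boundedness applied to the predual); in an infinite dimensional dual space every weak* neighborhood of a point contains a nontrivial affine subspace and hence is unbounded, so one can always manufacture unbounded weak* convergent nets. Since you are testing weak* closedness against arbitrary nets, you cannot assume the $f^{(\alpha)}$ lie in a fixed ball of $H^\infty$, and the appeal to weak* compactness of that ball collapses.

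The gap is standard and easily repaired, and the repair is exactly what the paper does: by the Krein--Smulian theorem, a convex set (in particular a subspace) of a dual Banach space is weak* closed as soon as its intersection with every closed ball is weak* closed, so one may legitimately restrict attention to bounded nets --- and, since the predual $\ell^1$ is separable, even to bounded sequences. Once that reduction is in place, your route (weak* compactness of the ball of $H^\infty$ via the duality $H^\infty \cong (L^1/H^1_0)^*$ together with weak* continuity of point evaluations) does finish the argument; the paper instead uses Vitali's theorem via Montel to extract a locally uniform limit from a bounded pointwise convergent sequence, which avoids invoking the $H^\infty$ duality but accomplishes the same thing. So: insert the Krein--Smulian reduction before asserting boundedness, and the proof is correct.
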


\begin{proof}
Let $(x_n)$ be a dense sequence in the open unit disk $\mathbb{D}$. Define
$\mathcal{A} \subset l^\infty$ to be the set of sequences $(a_n)$ with the
property that the map $x_n \mapsto a_n$ extends to a bounded analytic
function on $\mathbb{D}$. This is clearly an infinite dimensional unital
algebra, and
it is antisymmetric because any analytic function which takes real values
on a dense subset of $\mathbb{D}$ must be constant. For weak* closure, by
the Krein-Smulian theorem it suffices to check closure under bounded
pointwise convergence; since the predual of $l^\infty$ is separable, it
suffices to consider bounded pointwise convergent sequences. If $(f_n)$ is
a sequence of analytic functions on $\mathbb{D}$ whose restrictions to the
set $\{x_n\}$ are uniformly bounded and converge pointwise, then by continuity
the $f_n$ must be uniformly bounded, and Vitali's theorem (a consequence of
Montel's theorem) then implies that this sequence converges uniformly on
compact sets to a bounded analytic function on $\mathbb{D}$. So the pointwise
limit of the restrictions to the set $\{x_n\}$ still belongs to $\mathcal{A}$.
\end{proof}

\begin{prop}\label{maxant}
If $(X,\preceq)$ is any infinite totally ordered set, then
$\mathcal{T}_X$ is properly contained in another weak* closed antisymmetric
algebra.
\end{prop}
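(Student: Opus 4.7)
The plan is to strictly enlarge $\mathcal{T}_X$ by promoting its scalar diagonal part $\mathbb{C}\cdot I$ to a larger weak* closed antisymmetric subalgebra of the diagonal algebra $l^\infty(X)$, imitating the construction from Proposition \ref{hinfty}. Fix a countable infinite $Y \subseteq X$ together with a bijection $\iota: \mathbb{N} \to Y$, and a dense sequence $(x_n)$ in $\mathbb{D}$ with $x_0 = 0$. Define $\mathcal{D} \subseteq l^\infty(X)$ to be the set of bounded sequences $(a_x)$ for which the map $x_n \mapsto a_{\iota(n)}$ extends to a bounded analytic function $f$ on $\mathbb{D}$ and $a_x = f(0)$ for all $x \in X \setminus Y$. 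Let $\mathcal{N} \subseteq B(l^2(X))$ be the bounded strictly upper triangular operators (i.e., $\langle Ne_y, e_x\rangle = 0$ for $x \succeq y$), and set $\mathcal{A} = \mathcal{D} + \mathcal{N}$, with $\mathcal{D}$ identified with the corresponding diagonal operators. Since $f(z) = z$ gives a non-constant element of $\mathcal{D}$, we have $\mathcal{A} \supsetneq \mathbb{C}\cdot I + \mathcal{N} = \mathcal{T}_X$.

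The subalgebra $\mathcal{D}$ is unital, closed under products (bounded analytic functions form an algebra), and weak* closed in $l^\infty(X)$: the Vitali/Montel normal-family argument from Proposition \ref{hinfty} adapted to subnets shows that a bounded weak*-convergent net of elements of $\mathcal{D}$ has a pointwise limit still of the required form. It is antisymmetric because a self-adjoint sequence gives an $f$ whose imaginary part is harmonic and vanishes on the dense set $(x_n)$, hence vanishes identically, so $f$ is a real constant $c$ and $a_x = c$ on all of $X$. To see that $\mathcal{A}$ is an algebra, expand $(D_1 + N_1)(D_2 + N_2) = D_1 D_2 + D_1 N_2 + N_1 D_2 + N_1 N_2$; the first term lies in $\mathcal{D}$ and the other three in $\mathcal{N}$. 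For antisymmetry of $\mathcal{A}$, a self-adjoint $D + N \in \mathcal{A}$ has a self-adjoint diagonal part $D$, which must equal $\lambda I$ for some $\lambda \in \mathbb{R}$ by antisymmetry of $\mathcal{D}$; then $N = (D + N) - \lambda I$ is both strictly upper triangular and self-adjoint, hence zero.

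The remaining issue is weak* closure of $\mathcal{A}$, which I would handle by Krein-Smulian. On a bounded ball, if $A_\alpha = D_\alpha + N_\alpha \to A$ weak*, then weak* continuity of each functional $B \mapsto \langle Be_x, e_x\rangle$ gives pointwise convergence of the diagonals $D_\alpha \to D$, where $D$ is the diagonal part of $A$; weak* closure of $\mathcal{D}$ then forces $D \in \mathcal{D}$. The strictly upper triangular subspace is the intersection of the weak*-closed hyperplanes $\{B: \langle Be_y, e_x\rangle = 0\}$ for $x \succeq y$, hence weak* closed, so $A - D = \lim(A_\alpha - D_\alpha) \in \mathcal{N}$ and $A \in \mathcal{A}$. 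The main technical obstacle is weak* closedness of $\mathcal{D}$ when $X$ is uncountable, but this is handled by the same Vitali argument used in Proposition \ref{hinfty}, now applied to subnets rather than subsequences.
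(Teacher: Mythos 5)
Your proof is correct and takes essentially the same route as the paper: the paper fixes a surjection $\phi: X \to \mathbb{N}$ and enlarges $\mathcal{T}_X$ to the upper triangular operators whose diagonal is $f \circ \phi$ for $f$ in the $H^\infty$-type algebra of Proposition \ref{hinfty}, and your $\mathcal{D}$ is exactly this for the particular surjection collapsing $X \setminus Y$ to the index of $0$. The paper leaves the verifications as ``straightforward''; your write-up supplies them correctly, including the net/Krein--Smulian form of the Vitali argument needed when $X$ is uncountable.
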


\begin{proof}
Fix a surjection $\phi: X \to \mathbb{N}$, let $\mathcal{A}$ be as in
Proposition \ref{hinfty}, and define $\mathcal{B} \subset B(l^2(X))$
to be the set of all operators whose matrix relative to the standard
basis is upper triangular and whose main diagonal entries equal
$f\circ \phi$ for some $f \in \mathcal{A}$. One straightforwardly checks
that $\mathcal{B}$ is a weak* closed antisymmetric algebra.
\end{proof}

If $X$ is not discretely ordered, worse things can happen.

\begin{prop}
$\mathcal{T}_{\mathbb{Q}}$ is properly contained in another weak* closed
hereditarily antisymmetric algebra.
\end{prop}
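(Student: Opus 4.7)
The plan is to refine the construction of Proposition \ref{maxant}, exploiting the fact that $\mathbb{Q}$ admits a partition into countably many pairwise disjoint dense subsets --- a feature unavailable for discretely ordered sets such as $\mathbb{Z}$. First, I would construct a surjection $\phi : \mathbb{Q} \to \mathbb{N}$ such that $\phi^{-1}(n)$ is dense in $\mathbb{Q}$ for every $n$; this is standard via a diagonal argument (write $\mathbb{Q} = \bigsqcup_n A_n$ with each $A_n$ dense and set $\phi(q) = n$ for $q \in A_n$). Fix a dense sequence $(x_n)$ in $\mathbb{D}$ and let $\mathcal{A} \subset l^\infty$ be the algebra from Proposition \ref{hinfty}. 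Define $\mathcal{B} \subset B(l^2(\mathbb{Q}))$ exactly as in the proof of Proposition \ref{maxant}: the operators whose matrix relative to the standard basis is upper triangular for the order on $\mathbb{Q}$ and whose main diagonal is $(a_{\phi(q)})_{q \in \mathbb{Q}}$ for some $(a_n) \in \mathcal{A}$.

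The routine verifications proceed as in Propositions \ref{hinfty} and \ref{maxant}: $\mathcal{B}$ is closed under products because the diagonal of a product of upper triangular operators is the pointwise product of their diagonals; it is weak* closed because matrix entries are weak*-continuous functionals and $\mathcal{A}$ is weak* closed; and it properly contains $\mathcal{T}_{\mathbb{Q}}$ because taking $(a_n) = (x_n)$ --- corresponding to the analytic function $z \mapsto z$ --- yields an element of $\mathcal{B}$ whose main diagonal is nonconstant. Moreover, since $\mathcal{B} \supseteq \mathcal{T}_{\mathbb{Q}}$ and each subspace $\mathcal{E}_L = \overline{\mathrm{span}}\{e_q : q \in L\}$ for a lower set $L \subseteq \mathbb{Q}$ is automatically invariant under every upper triangular operator, the invariant subspaces of $\mathcal{B}$ coincide with those of $\mathcal{T}_{\mathbb{Q}}$; hence, by the analysis in Proposition \ref{txprop}, the semi-invariant subspaces of $\mathcal{B}$ are precisely the $\mathcal{E}_I$ for intervals $I \subseteq \mathbb{Q}$.

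The main content is hereditary antisymmetry. Fix a nontrivial interval $I$ (with at least two points), let $P_I$ project onto $\mathcal{E}_I$, and suppose $P_I A P_I$ is self-adjoint for some $A \in \mathcal{B}$. Its matrix in the basis $\{e_q : q \in I\}$ is upper triangular and self-adjoint, hence diagonal with real entries $(a_{\phi(q)})_{q \in I}$. Because $I$ contains a nonempty open subinterval of $\mathbb{Q}$ and each $\phi^{-1}(n)$ is dense, we have $\phi(I) = \mathbb{N}$, so the bounded analytic function $f$ on $\mathbb{D}$ determined by $(a_n)$ is real-valued on the dense set $\{x_n\} \subset \mathbb{D}$. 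Therefore $f$, and consequently $(a_n)$, must be constant, and $P_I A P_I$ is a scalar multiple of $P_I$. The cases of empty or singleton $I$ are trivial.

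The only genuine obstacle is the combinatorial input --- the existence of $\phi$ with all fibers dense --- which fails for discretely ordered sets and is exactly what distinguishes $\mathbb{Q}$ from $\mathbb{Z}$ in this construction. Once that is in place, every nontrivial compression is forced to ``see'' the entire sequence $(x_n) \subset \mathbb{D}$, and the argument reduces to the identity theorem for bounded analytic functions.
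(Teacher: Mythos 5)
Your proposal is correct and follows essentially the same route as the paper: partition $\mathbb{Q}$ into countably many dense subsets to define $\phi$, form $\mathcal{B}$ as in Proposition \ref{maxant}, identify the semi-invariant subspaces with intervals, and use density of the fibers to force any self-adjoint compression to come from an analytic function that is real on a dense subset of $\mathbb{D}$, hence constant. The only cosmetic difference is that you spell out a few routine verifications the paper leaves implicit.
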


\begin{proof}
First, write $\mathbb{Q}$ as the disjoint union of a sequence of subsets
$X_n$ each of which is dense in $\mathbb{Q}$. (For instance, $X_n$ could
be the set of all rationals which when written in lowest terms have a
denominator whose smallest prime factor is the $n$th prime, including
$\mathbb{Z}$ in $X_1$, say.) Define $\phi: \mathbb{Q} \to \mathbb{N}$ by
setting $\phi(x) = n$ when $x \in X_n$ and let $\mathcal{B}$ be as in the
proof of Proposition \ref{maxant}, for this $\phi$.

In this case, $\mathcal{B}$ is
hereditarily antisymmetric because the semi-invariant subspaces are
precisely the subspaces of the form ${\rm span}\{e_x: x \in I\}$ for some
interval $I$ in $\mathbb{Q}$, and the compression of $\mathcal{B}$ to any
such subspace, if $I$ contains more than a single point, consists of upper
triangular operators whose diagonal entries take all the values of some
function in $\mathcal{A}$. This uses the fact that every interval in
$\mathbb{Q}$ of positive length contains points
from every $X_n$. Since $\mathcal{A}$ is antisymmetric,
the diagonal entries of any such compression, if nonconstant, cannot
all be real, showing that the only self-adjoint operators in the compression
of $\mathcal{B}$ are scalar multiples of the identity. We have shown that
$\mathcal{B}$ is hereditarily antisymmetric.
\end{proof}

We may also consider continuous analogs of $\mathcal{T}_X$.

\begin{defi}\label{aepsilon}
For each $\epsilon > 0$ define $\mathcal{T}^0_\epsilon$ to be the
set of operators $A \in B(L^2(\mathbb{R}))$ which satisfy
$\langle Af,g\rangle = 0$ whenever $f$ is supported on
$(-\infty, a + \epsilon]$ and $g$ is supported on $[a,\infty)$, for
some $a \in \mathbb{R}$. Equivalently, $A$ takes $L^2((-\infty,a + \epsilon])
\subset L^2(\mathbb{R})$ into $L^2((-\infty, a])$ for each $a$. This is a
dual operator algebra. Let $\mathcal{T}_\epsilon$ be the unitization of
$\mathcal{T}_\epsilon^0$.
\end{defi}

In regard to the next result, note that any invariant subspace for the
union of a chain $\{A_\lambda\}$ of operator algebras is invariant
for each $A_\lambda$. So the same is true of semi-invariant
subspaces, and this means that if each $A_\lambda$ is hereditarily
antisymmetric then $\bigcup A_\lambda$ cannot contain any operators which
compress to a nonscalar self-adjoint operator on some semi-invariant
subspace. However, its weak* closure might.

\begin{prop}\label{aepsilonprop}
For each $\epsilon > 0$ the algebra $\mathcal{T}_\epsilon$ is hereditarily
antisymmetric, but the weak* closure of the union
$\bigcup_{\epsilon > 0} \mathcal{T}_\epsilon$ is not even antisymmetric.
\end{prop}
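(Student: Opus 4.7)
The plan is to prove the two assertions separately. I start with the easier negative assertion. For any $\phi \in L^\infty(\mathbb{R})$ the operator $M_\phi S_\epsilon$ — multiplication by $\phi$ composed with translation, $(S_\epsilon f)(x) = f(x+\epsilon)$ — belongs to $\mathcal{T}^0_\epsilon \subseteq \mathcal{T}_\epsilon$, since $f$ supported on $(-\infty, a+\epsilon]$ implies $S_\epsilon f$ supported on $(-\infty, a]$, whence $M_\phi S_\epsilon f$ is likewise supported on $(-\infty, a]$. By strong continuity of translation on $L^2(\mathbb{R})$, $S_\epsilon \to I$ strongly as $\epsilon \to 0$, so $M_\phi S_\epsilon \to M_\phi$ strongly and hence weak*. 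Choosing $\phi$ real-valued and nonconstant produces a nonscalar self-adjoint operator $M_\phi$ in $\overline{\bigcup_{\epsilon>0}\mathcal{T}_\epsilon}^{wk*}$, defeating antisymmetry.

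For the hereditary antisymmetry of $\mathcal{T}_\epsilon$ the structural key is the matrix-element condition: for every $B \in \mathcal{T}^0_\epsilon$, $\langle Bf, g\rangle = 0$ whenever $\sup\operatorname{supp}(f) \leq \inf\operatorname{supp}(g) + \epsilon$. Using the rank-one operators $|h\rangle\langle k| \in \mathcal{T}^0_\epsilon$ (which is valid precisely when $\sup\operatorname{supp}(h) \leq \inf\operatorname{supp}(k) - \epsilon$), I would show that every invariant subspace has the form $\mathcal{E}_1 = L^2(-\infty, a_1 - \epsilon] \oplus V_1$ with $V_1 \subseteq L^2((a_1 - \epsilon, a_1])$ closed and $a_1 = \inf\{c : \mathcal{E}_1 \subseteq L^2(-\infty, c]\}$; that is, invariant subspaces are pinned to the nest $\{L^2(-\infty, c]\}_c$ within a strip of width $\epsilon$. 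Consequently every semi-invariant $\mathcal{E} = \mathcal{E}_1 \ominus \mathcal{E}_2$ lies in $L^2((a_2 - \epsilon, a_1])$ and, when $a_1 \geq a_2 + \epsilon$, splits orthogonally as $\mathcal{E}^{\rm low} \oplus V_1$ with $\mathcal{E}^{\rm low}$ itself semi-invariant (arising from the pair $\mathcal{E}_1' = L^2(-\infty, a_1 - \epsilon]$ and $\mathcal{E}_2' = \mathcal{E}_2$) and lying in $L^2((a_2 - \epsilon, a_1 - \epsilon])$.

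With respect to this splitting, for any $A = \lambda I + B \in \mathcal{T}_\epsilon$ the compression $P_\mathcal{E} A P_\mathcal{E}$ is block upper triangular, because $B$ carries $\mathcal{E} \subseteq L^2(-\infty, a_1]$ into $L^2(-\infty, a_1 - \epsilon]$, which is orthogonal to $V_1$. Self-adjointness of this compression then forces the off-diagonal block to vanish, $\lambda$ to be real (since the high diagonal block is simply $\lambda P_{V_1}$ for support reasons), and the low diagonal block to be self-adjoint, which by induction on $a_1 - a_2$ (which decreases by $\epsilon$ in passing from $\mathcal{E}$ to $\mathcal{E}^{\rm low}$) is a scalar multiple of $P_{\mathcal{E}^{\rm low}}$. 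The main obstacle is verifying that this low-block scalar must coincide with $\lambda$, so that the whole compression really is a scalar multiple of $P_\mathcal{E}$; this is handled by observing that the compression of $B$ alone to $\mathcal{E}^{\rm low}$ annihilates any function in $\mathcal{E}^{\rm low}$ whose support is pushed below the effective bottom of $\mathcal{E}^{\rm low}$ by the shift property of $B$, which forces the scalar discrepancy between the two diagonal blocks to vanish. The induction terminates once $a_1 - a_2 \leq \epsilon$, where $\mathcal{E}$ sits in a strip of width at most $2\epsilon$ and a direct application of the matrix-element condition together with self-adjointness shows $P_\mathcal{E} B P_\mathcal{E} = 0$ outright.
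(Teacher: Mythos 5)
Your proof of the negative assertion is correct and is essentially the paper's argument: the operators $M_\phi S_\epsilon$ are exactly the paper's $(Ag)(x) = f(x)g(x+r)$, and since the family is norm bounded, strong convergence does give weak* convergence. For the positive assertion you take a genuinely different route. The paper asserts that every semi-invariant subspace has the form $L^2(Y)$ for a measurable $Y$ and then exploits the fact that such subspaces are stable under multiplication by indicator functions, so that any vector can be cut into pieces supported on width-$\epsilon$ strips without leaving the subspace. Your finer description of the invariant subspaces as $L^2(-\infty,a_1-\epsilon]\oplus V_1$ with $V_1$ an \emph{arbitrary} closed subspace of $L^2((a_1-\epsilon,a_1])$ is correct (and the rank-one operators $|h\rangle\langle k|$ with $\sup\operatorname{supp}(h)\le\inf\operatorname{supp}(k)-\epsilon$ do the job), and your peeling recursion, including the block-triangularity of the compression and the argument identifying the two diagonal scalars, is sound.

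The gap is in the base case. When $a_1-a_2\le\epsilon$ the subspace $\mathcal{E}$ sits in a strip of width up to $2\epsilon$, and the matrix-element condition kills $\langle Bf,g\rangle$ only when $\sup\operatorname{supp}(f)\le\inf\operatorname{supp}(g)+\epsilon$; self-adjointness of the compression adds only the reflected condition $\sup\operatorname{supp}(g)\le\inf\operatorname{supp}(f)+\epsilon$. If $f$ and $g$ each have mass at both ends of the strip --- for instance $f=g$ with support of diameter greater than $\epsilon$, which is the generic situation --- neither condition applies, so nothing here is ``direct''. The standard repair is to cut $f$ and $g$ into pieces supported on width-$\epsilon$ strips, but self-adjointness of $P_{\mathcal{E}}BP_{\mathcal{E}}$ only relates $\langle Bf,g\rangle$ to $\overline{\langle Bg,f\rangle}$ for $f,g\in\mathcal{E}$, so the pieces must remain in $\mathcal{E}$. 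Precisely because you (correctly) allow arbitrary closed subspaces $V_1\subseteq L^2((a_1-\epsilon,a_1])$ and $V_2\subseteq L^2((a_2-\epsilon,a_2])$, which are not closed under truncation and whose strips overlap in the base case, there is no cutting point $c$ for which $f\mapsto f\cdot 1_{(c,\infty)}$ visibly preserves $\mathcal{E}$. (When $V_2=\{0\}$, or when $V_2\subseteq L^2(-\infty,a_1-\epsilon]$, one can cut at $a_1-\epsilon$ and the argument closes; the problematic configuration is a $V_2$ whose vectors have nonzero components in $V_1$.) As written, the base case is asserted rather than proved, and it is exactly the case where your more general invariant subspaces make the paper's truncation trick unavailable; the proof does not close without an additional argument here.
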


\begin{proof}
The nontrivial invariant subspaces for $\mathcal{T}_\epsilon^0$
(or $\mathcal{T}_\epsilon$) are precisely
the subspaces of the form $L^2((-\infty, a] \cup X) \subset L^2(\mathbb{R})$
where $a \in \mathbb{R}$ and $X$ is a measurable subset of $[a,a + \epsilon]$.
It follows that every semi-invariant subspace has the form $L^2(Y)$ for some
measurable $Y \subseteq \mathbb{R}$ (which is the difference of two sets of
the preceding form, but we do not need this).

Let $Y$ be any measurable subset of $\mathbb{R}$ and let $P$ be the
orthogonal projection of $L^2(\mathbb{R})$ onto $L^2(Y)$. Then for any
$A \in \mathcal{T}_\epsilon^0$, the compression $PAP$ satisfies
$\langle PAP f,g\rangle = \langle Af,g\rangle = 0$ whenever $f$ is supported on
$(-\infty, a+\epsilon] \cap Y$ and $g$ is supported on $[a,\infty) \cap Y$,
for some $a$. Thus the same is true of any operator in
$\overline{P\mathcal{T}^0_\epsilon P}^{wk*}$. So any self-adjoint operator
$B$ in this set must satisfy $\langle Bf,g\rangle = \langle Bg,f\rangle = 0$
for all such $f$ and $g$. In particular, if $f$ is supported on
$[a,a + \epsilon]$ then $\langle Bf,g\rangle = 0$ if $g$ is either supported
on $[a,\infty)$ or on $(-\infty, a + \epsilon]$, which implies that $Bf = 0$.
This implies that $B = 0$. This shows that every subquotient of
$\mathcal{T}^0_\epsilon$ is antisymmetric, i.e., $\mathcal{T}^0_\epsilon$ is
hereditarily antisymmetric. Hereditary antisymmetry of $\mathcal{T}_\epsilon$
follows from Proposition \ref{unitprop}.

For every $f \in L^\infty(\mathbb{R})$ and $r > 0$ the union
$\bigcup_{\epsilon > 0} \mathcal{T}_\epsilon$ contains the operator
$(Ag)(x) = f(x)g(x + r)$ which shifts everything in $L^2(\mathbb{R})$ left
by $r$ and then multiplies by $f$. As $r \to 0$ these operators converge
weak* to the operator of multiplication by $f$. So every multiplication
operator belongs to
$\overline{\bigcup_{\epsilon > 0} \mathcal{T}_\epsilon}^{wk*}$, and thus
this algebra is not antisymmetric.
\end{proof}

This example shows that the restriction to weak* closed algebras is
important. We can have an algebra whose compression to any semi-invariant
subspace contains no nonscalar self-adjoint operators, but whose weak*
closure does not have this property (indeed, whose weak* closure itself
contains nonscalar self-adjoint operators).

The phenomenon exhibited in Proposition \ref{aepsilonprop} unfortunately
limits our ability to reduce the analysis of arbitrary
hereditarily antisymmetric dual operator algebras to the analysis of
maximal hereditarily antisymmetric dual operator algebras.

There is also a natural generalization of Example \ref{exam2} to infinite
dimensions.

\begin{exam}\label{iexam2}
Let $\mathcal{P} \subset B(\mathcal{H})$ be a maximal family of commuting
(nonorthogonal) projections. Then its commutant
$$\mathcal{P}' = \{A \in B(H): AP = PA\mbox{ for all }P \in \mathcal{P}\}$$
is a unital dual operator algebra.
\end{exam}

The following special case is important enough to merit a mention.

\begin{prop}\label{schauder}
Suppose $(v_n)$ is a Schauder basis of the Hilbert space $\mathcal{H}$.
Then the projections $P_n$ satisfying $P_nv_n = v_n$ and $P_nv_k = 0$
for $k \neq n$ are uniformly bounded and commute. There is exactly one
maximal family of commuting projections which contains the set $\{P_n\}$,
and it consists of precisely those operators $P_X$, for some
$X \in \mathbb{N}$, which are bounded and satisfy $P_Xv_n = v_n$ if
$n \in X$ and $P_Xv_n = 0$ if $n \not\in X$.
\end{prop}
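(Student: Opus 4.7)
My plan is to verify the three claims in order: boundedness of the $P_n$, their commutativity, and then the identification of the unique maximal commuting family.

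For boundedness, I would invoke the standard fact that a Schauder basis has uniformly bounded partial sum projections $S_N: \sum c_k v_k \mapsto \sum_{k \leq N} c_k v_k$, and consequently the coordinate functionals $v \mapsto c_n(v)$ are continuous, with norm bounded uniformly in $n$ (in fact $P_n = S_n - S_{n-1}$ already gives $\|P_n\| \leq 2K$ where $K = \sup_N \|S_N\|$). This yields that each $P_n$ is a bounded projection, $P_n v = c_n(v) v_n$, and $\sup_n \|P_n\| < \infty$. Commutativity is immediate from $P_n P_m v_k = P_m P_n v_k = 0$ whenever $\{n,m\} \neq \{k\}$, and $P_n^2 = P_n$ on basis vectors; since the span of $\{v_k\}$ is dense, this extends to all of $\mathcal{H}$.

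For the maximal family claim, suppose $P$ is any bounded projection that commutes with every $P_n$. Then $P$ leaves the range $\mathrm{ran}(P_n) = \mathbb{C} v_n$ invariant, so $P v_n = \alpha_n v_n$ for some scalar $\alpha_n$; the identity $P^2 = P$ forces $\alpha_n \in \{0,1\}$. Letting $X = \{n : \alpha_n = 1\}$, we have $P v_n = v_n$ for $n \in X$ and $P v_n = 0$ for $n \notin X$, which since the span of the basis is dense and $P$ is bounded means $P = P_X$ in the sense of the statement. Conversely, if $P_X$ is a bounded operator on $\mathcal{H}$ then the computation $P_X P_Y v_n = v_n \cdot \mathbf{1}[n \in X \cap Y] = P_Y P_X v_n$ shows that all such $P_X$ pairwise commute, and each commutes with every $P_n = P_{\{n\}}$.

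Combining: any maximal commuting family $\mathcal{P}$ containing $\{P_n\}$ is forced to consist entirely of operators of the form $P_X$ (by the first half of the previous paragraph applied to each $P \in \mathcal{P}$), and by maximality together with the mutual commutativity of all bounded $P_X$, it must contain every such $P_X$. Thus $\mathcal{P}$ is uniquely determined as the set of bounded $P_X$. The only substantive point beyond these routine verifications is the appeal to the Schauder basis constant for boundedness of the $P_n$; everything else reduces to checking action on basis vectors and using density of their span.
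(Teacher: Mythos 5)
Your proof is correct and follows essentially the same route as the paper, which simply asserts that the uniform boundedness and commutativity of the $P_n$ is a standard Schauder basis fact and that the identification of the maximal family follows from the easy observation that any projection commuting with every $P_n$ must fix or annihilate each $v_n$. You have merely written out the details (the bound $\|P_n\|\leq 2K$ via partial-sum projections, the eigenvalue argument $\alpha_n\in\{0,1\}$, and the maximality argument) that the paper leaves implicit.
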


The first assertion is a standard fact about Schauder bases, and the second
is just the easy observation that any projection that commutes with every
$P_n$ must have the stated form.

I will call the maximal family of commuting projections identified in
Proposition \ref{schauder} {\it the family of projections associated
with the Schauder basis $(v_n)$}.

\begin{prop}
Let $\mathcal{P} \subset B(\mathcal{H})$ be a maximal family of commuting
projections. Then $\mathcal{P}'$ is antisymmetric if and only if
$\mathcal{P}$ contains no orthogonal projections besides $0$ and $I$.
\end{prop}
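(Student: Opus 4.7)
The plan is to reduce both directions to Proposition \ref{sainf}, applied to the algebra $\mathcal{P}'$. Two facts make this immediate: first, the commutant of any subset of $B(\mathcal{H})$ is WOT-closed (hence weak* closed, being a convex subspace of $B(\mathcal{H})$), so Proposition \ref{sainf} will apply; second, the elements of $\mathcal{P}$ commute pairwise by hypothesis, so $\mathcal{P} \subseteq \mathcal{P}'$.

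For the forward direction, suppose $\mathcal{P}'$ is antisymmetric. By Proposition \ref{sainf}, $\mathcal{P}'$ contains no orthogonal projections other than $0$ and $I$. Since $\mathcal{P} \subseteq \mathcal{P}'$, the same is true of $\mathcal{P}$.

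For the reverse direction, suppose $\mathcal{P}$ has no orthogonal projections besides $0$ and $I$. Again by Proposition \ref{sainf}, it suffices to show that $\mathcal{P}'$ contains no orthogonal projections other than $0$ and $I$. So let $Q \in \mathcal{P}'$ be an orthogonal projection. Then $Q$ commutes with every element of $\mathcal{P}$ (and with itself), so $\mathcal{P} \cup \{Q\}$ is a family of commuting projections containing $\mathcal{P}$. By maximality of $\mathcal{P}$ we conclude $Q \in \mathcal{P}$, whence $Q \in \{0, I\}$ by hypothesis.

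There is really no obstacle here: the argument is a direct application of Proposition \ref{sainf} together with the defining property of a maximal commuting family. The only point worth flagging is the need to note that $\mathcal{P}'$ is weak* closed, so that the orthogonal-projection characterization of antisymmetry in Proposition \ref{sainf} is available.
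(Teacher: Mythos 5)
Your proof is correct and follows essentially the same route as the paper's: the paper likewise observes that maximality forces every projection in $\mathcal{P}'$ to lie in $\mathcal{P}$ and then invokes Proposition \ref{sainf}. Your write-up just makes explicit the two supporting facts (that $\mathcal{P} \subseteq \mathcal{P}'$ and that $\mathcal{P}'$, being a commutant, is weak* closed) which the paper leaves implicit.
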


\begin{proof}
By maximality, there are no projections in $\mathcal{P}'$ other than those
in $\mathcal{P}$. The result therefore follows from Proposition \ref{sainf}.
\end{proof}

In order to characterize hereditary antisymmetry, we need a slightly stronger
hypothesis.

\begin{theo}\label{iha}
Let $\mathcal{P} \subset B(\mathcal{H})$ be a maximal family of commuting
projections which is uniformly bounded. Then $\mathcal{P}'$ is hereditarily
antisymmetric if and only if whenever $P, Q, R \in \mathcal{P}$ satisfy
$PQ = PR = QR = 0$ but $P, Q \neq 0$, the orthogonal projections of
${\rm ran}(P)$ and ${\rm ran}(Q)$ into ${\rm ran}(R)^\perp$ are not
mutually orthogonal.
\end{theo}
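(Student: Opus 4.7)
The plan is to prove both implications by contrapositive, using Proposition \ref{sainf} to translate antisymmetry of a weak* closed subquotient into the absence of a nontrivial orthogonal projection.

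For the forward direction, suppose $P, Q, R \in \mathcal{P}$ satisfy $PQ = PR = QR = 0$, $P, Q \neq 0$, and that the orthogonality condition fails, i.e., $\mathcal{P}_0 \perp \mathcal{Q}_0$ where $\mathcal{P}_0 = \tilde{P}_R(\mathrm{ran}(P))$, $\mathcal{Q}_0 = \tilde{P}_R(\mathrm{ran}(Q))$, and $\tilde{P}_R$ is the orthogonal projection onto $\mathrm{ran}(R)^\perp$. Pairwise commutativity together with vanishing pairwise products make $P + Q + R$ a projection in $\mathcal{P}$ whose range is the algebraic direct sum $\mathrm{ran}(P) + \mathrm{ran}(Q) + \mathrm{ran}(R)$. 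Both $\mathrm{ran}(P+Q+R)$ and $\mathrm{ran}(R)$ are invariant for $\mathcal{P}'$, so $\mathcal{E} = \mathrm{ran}(P+Q+R) \ominus \mathrm{ran}(R)$ is semi-invariant, and since $\mathcal{E} = \mathrm{ran}(P+Q+R) \cap \mathrm{ran}(R)^\perp$, the orthogonal projection $P_\mathcal{E}$ onto $\mathcal{E}$ agrees with $\tilde{P}_R$ on $\mathrm{ran}(P) \cup \mathrm{ran}(Q)$, giving $\mathcal{E} = \mathcal{P}_0 + \mathcal{Q}_0$. For any $v \in \mathcal{E}$ the unique decomposition $v = p + q + r$ with $p \in \mathrm{ran}(P), q \in \mathrm{ran}(Q), r \in \mathrm{ran}(R)$ yields $Pv = p$ (using $PQ = PR = 0$), so $P_\mathcal{E} P v = \tilde{P}_R p$; and since $v \in \mathrm{ran}(R)^\perp$ a short check gives $v = \tilde{P}_R p + \tilde{P}_R q$, so $P_\mathcal{E} P P_\mathcal{E}$ is precisely the projection onto $\mathcal{P}_0$ along $\mathcal{Q}_0$. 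Under the assumption $\mathcal{P}_0 \perp \mathcal{Q}_0$ this is an orthogonal projection, and it is nontrivial because $PR = 0$ forces $\mathrm{ran}(P) \cap \mathrm{ran}(R) = \{0\}$ (giving $\mathcal{P}_0 \neq \{0\}$) and likewise $\mathcal{Q}_0 \neq \{0\}$. So the compression contains a nontrivial orthogonal projection and its weak* closure is not antisymmetric.

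For the reverse direction, assume instead that for some semi-invariant $\mathcal{E} = \mathcal{E}_1 \ominus \mathcal{E}_2$ the weak* closure $\overline{P_\mathcal{E}\mathcal{P}' P_\mathcal{E}}^{wk*}$ contains a nontrivial orthogonal projection $E_0$; the task is to produce $P, Q, R \in \mathcal{P}$ that violate the orthogonality condition. The starting structural observation is that $E_0$ commutes with every compressed projection $P_\mathcal{E} R P_\mathcal{E}$ ($R \in \mathcal{P}$), because the map $A \mapsto P_\mathcal{E} A P_\mathcal{E}$ is a homomorphism on $\mathcal{P}'$, the family $\mathcal{P}$ is central in $\mathcal{P}'$, and commutation is preserved by weak* limits. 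My plan is to run the forward construction in reverse: use this commutation together with maximality of $\mathcal{P}$ to identify two mutually annihilating projections $P, Q \in \mathcal{P}$ (with $PQ = 0$) and a third projection $R \in \mathcal{P}$ (with $PR = QR = 0$) such that $\mathcal{E}_2 \subseteq \mathrm{ran}(R)$ and $\mathcal{E} = \mathrm{ran}(P+Q+R) \ominus \mathrm{ran}(R)$, arranged so that the decomposition $\mathcal{E} = \mathrm{ran}(E_0) \oplus \ker(E_0)$ coincides with the decomposition $\tilde{P}_R(\mathrm{ran}(P)) \oplus \tilde{P}_R(\mathrm{ran}(Q))$; the orthogonality of the two eigenspaces of $E_0$ then gives the orthogonality of $\tilde{P}_R(\mathrm{ran}(P))$ and $\tilde{P}_R(\mathrm{ran}(Q))$, in contradiction with the hypothesis.

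The main obstacle is exactly this lifting step. Because $E_0$ lies only in the weak* closure of the compression, not in $P_\mathcal{E}\mathcal{P}' P_\mathcal{E}$ itself, and because the companion-subspace isomorphism of Proposition \ref{icompprop} is not isometric, the orthogonality that lives in $\mathcal{E}$ must be carefully transferred back to the ambient Hilbert space. Extracting concrete projections $P, Q \in \mathcal{P}$ whose compressions align (after passage to the quotient by $\mathcal{E}_2$) with $\mathrm{ran}(E_0)$ and $\ker(E_0)$ requires a delicate use of the maximality and uniform boundedness of $\mathcal{P}$ to ensure the candidate projections commute with every element of $\mathcal{P}$ and hence lie in $\mathcal{P}$ themselves; this is where the bulk of the technical work would live.
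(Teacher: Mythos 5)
Your first implication (hereditary antisymmetry forces the nonorthogonality condition, proved contrapositively) is correct and is essentially the paper's own argument: both compress to the semi-invariant subspace spanned by the projections of ${\rm ran}(P)$ and ${\rm ran}(Q)$ into ${\rm ran}(R)^\perp$ and observe that the compression of $P$ becomes a nontrivial orthogonal projection there, so the subquotient fails to be antisymmetric by Proposition \ref{sainf}.

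The converse implication, however, is not proved: you have written a plan and then correctly located the difficulty (identifying the relevant semi-invariant subspace and lifting the orthogonal projection back into $\mathcal{P}$) without supplying any mechanism to overcome it. The missing idea is the Day--Dixmier similarity theorem \cite{Da, Di}. Since $\{2P-I : P \in \mathcal{P}\}$ is a uniformly bounded abelian group of invertibles, it is similar to a group of unitaries; as each has spectrum in $\{1,-1\}$, there is an invertible $S$ such that $S^{-1}\mathcal{P}S$ is a maximal commuting family of \emph{orthogonal} projections, i.e.\ the projection lattice of a maximal abelian von Neumann algebra. This one step does everything your sketch defers. It shows that the invariant subspaces of $\mathcal{P}'$ are precisely the ranges of the projections in $\mathcal{P}$, so the semi-invariant subspace $\mathcal{E}$ on which antisymmetry fails really is ${\rm ran}(R_1)\ominus{\rm ran}(R_2)$ with $R_1,R_2\in\mathcal{P}$ --- your sketch simply posits such an $R$ with $\mathcal{E}_2\subseteq{\rm ran}(R)$ without producing it, and a priori $\mathcal{E}_1,\mathcal{E}_2$ need not be ranges of elements of $\mathcal{P}$ at all. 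It shows, via the companion projection $R_0=R_1-R_2$ and Proposition \ref{icompprop}, that the compression to $\mathcal{E}$ is already weak* closed (it is isomorphic to $R_0\mathcal{P}'R_0=\mathcal{P}'R_0$), so non-antisymmetry produces a genuine nonscalar orthogonal projection in the compression itself rather than merely in its weak* closure. And it makes the lift trivial: in the conjugated picture every idempotent in $\mathcal{M}r_0$ ($\mathcal{M}$ maximal abelian) is a projection of $\mathcal{M}$, so the projections found in the compression come from $\mathcal{P}R_0\subseteq\mathcal{P}$. By contrast, your observation that $E_0$ commutes with every compressed element of $\mathcal{P}$ does not, by itself, produce an element of $\mathcal{P}$ upstairs --- the compression map need not be surjective onto the commutant of the compressed family --- and as written your argument never actually uses the uniform boundedness hypothesis, which is the only place the hypothesis can enter. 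Without a similarity argument of this kind the converse direction remains open.
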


\begin{proof}
Suppose $P, Q, R \in \mathcal{P}$ have the stated properties. Let $\tilde{R}$
be the orthogonal projection onto ${\rm ran}(R)^\perp$. Since
${\rm ran}(R)^\perp$ is coinvariant and therefore
semi-invariant, the map $A \mapsto \tilde{R}A\tilde{R}$ is a homomorphism,
and thus $\tilde{P} = \tilde{R}P\tilde{R}$ and $\tilde{Q} =
\tilde{R}Q\tilde{R}$ are commuting projections. This shows that
$\tilde{R}\mathcal{P}'\tilde{R}$ contains a pair of projections whose
product is zero and whose ranges are orthogonal, and from this we get that
the range of $\tilde{P} + \tilde{Q}$ is semi-invariant (cf.\ Proposition
\ref{compofcomp}) and the compressions of $P$ and $Q$ to this subspace
are nonzero orthogonal projections which sum to the identity. So
$\mathcal{P}'$ has a subquotient which contains a nonscalar orthogonal
projection, i.e., it is not hereditarily antisymmetric.

For the converse, suppose $\mathcal{P}'$ is not hereditarily antisymmetric.
The set $\mathcal{U} = \{2P - I: P \in \mathcal{P}\}$
is an abelian group under operator product, so, using the uniform boundedness
hypothesis, a theorem of Day and Dixmier \cite{Da, Di} implies that it can
be conjugated to a family of unitaries. That is, there exists an invertible
$S \in B(\mathcal{H})$ such that $S^{-1}(2P - I)S = 2S^{-1}PS - I$ is unitary
for all $P \in \mathcal{P}$. But the spectrum of each of these operators is
contained in $\{1, -1\}$, so each of these unitaries is self-adjoint and
has the form
$2\tilde{P} - I$ for some orthogonal projection $\tilde{P} = S^{-1}PS$. Thus
the set $S^{-1}\mathcal{P}S$ is a maximal commuting family of orthogonal
projections, i.e., it is the set of projections in the maximal abelian von
Neumann algebra $(S^{-1}\mathcal{P}S)'$.

Now $(S^{-1}\mathcal{P}S)' = S^{-1}\mathcal{P}'S$, so the invariant subspaces
for $\mathcal{P}'$ are precisely the subspaces of the form
$S(\mathcal{E})$ where $\mathcal{E}$ is invariant for
$(S^{-1}\mathcal{P}S)'$. But the invariant subspaces for the latter are
just the ranges of the projections in $S^{-1}\mathcal{P}S$. Thus we have
shown that the invariant subspaces for $\mathcal{P}$ are precisely the
ranges of the projections in $\mathcal{P}$. The semi-invariant subspaces
are therefore the orthogonal differences between ranges of projections in
$\mathcal{P}$.

Since $\mathcal{P}'$ is not hereditarily antisymmetric, there is a
semi-invariant subspace $\mathcal{E}$ such that the compression of
$\mathcal{P}'$ to $\mathcal{E}$ is not antisymmetric. Say
$\mathcal{E} = {\rm ran}(R_1) \ominus {\rm ran}(R_2)$ for some projections
$R_1, R_2 \in \mathcal{P}$ with ${\rm ran}(R_2) \subset {\rm ran}(R_1)$.

Now $R_0 = R_1 - R_2$ is the natural projection onto a companion subspace
of $\mathcal{E}$, and $R_0\mathcal{P}'R_0 = \mathcal{P}'R_0$ is weak* closed,
so by Proposition \ref{icompprop} so is the compression of $\mathcal{P}'$ to
$\mathcal{E}$.

Since this compression is not antisymmetric, it therefore contains
nonscalar orthogonal projections $P$ and $Q$ whose sum is the orthogonal
projection onto $\mathcal{E}$. These correspond via Proposition
\ref{icompprop} to projections $\tilde{P}, \tilde{Q} \in \mathcal{P}R_0
\subseteq \mathcal{P}$ whose ranges orthogonally project onto orthogonal
subspaces of ${\rm ran}(R_2)^\perp$.
\end{proof}

If $\mathcal{P}$ is the family of projections associated with some Schauder
basis, as in Proposition \ref{schauder}, and the basis is actually Riesz,
then we are in the setting of Theorem \ref{iha}. In this case, at least,
the operator algebra $\mathcal{P}'$ is maximal hereditarily antisymmetric.

\begin{theo}
Suppose $\mathcal{P}$ is the family of projections associated with a Riesz
basis $(v_n)$. If $\mathcal{P}'$ is hereditarily antisymmetric then it is
maximal hereditarily antisymmetric.
\end{theo}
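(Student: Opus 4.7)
My plan is to mimic the finite-dimensional argument that $\mathcal{D}_{\bf v}$ is maximal hereditarily antisymmetric, with the Riesz hypothesis controlling the infinite sums that arise. Since $(v_n)$ is Riesz, there is a bounded invertible $S$ and an orthonormal basis $(e_n)$ with $v_n=Se_n$, so $\mathcal{P}'$ is exactly the set of operators whose matrix in the $(v_n)$ basis is diagonal. Let $E_{ij}$ denote the rank-one operator with $E_{ij}v_j=v_i$ and $E_{ij}v_l=0$ for $l\ne j$. The identity $E_{ii}AE_{jj}=a_{ij}E_{ij}$, where $a_{ij}$ are the matrix entries of $A$ in the $(v_n)$ basis, shows---exactly as in Theorem \ref{precthm}---that any dual operator algebra $\mathcal{A}\supseteq\mathcal{P}'$ determines a preorder $\preceq$ on $\mathbb{N}$, with $E_{ij}\in\mathcal{A}$ iff $i\preceq j$, such that every $A\in\mathcal{A}$ has matrix entries supported on $\preceq$.

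Now suppose $\mathcal{A}\supsetneq\mathcal{P}'$ is a dual operator algebra. Then there exist distinct $i,j\in\mathbb{N}$ with $i\preceq j$, so $X=\{k:k\preceq j\}$ has cardinality at least $2$. Let $\mathcal{E}=\overline{\mathrm{span}}\{v_k:k\in X\}$, which is invariant for $\mathcal{A}$ by transitivity of $\preceq$ and continuity, and let $\mathcal{E}'=\overline{\mathrm{span}}\{v_k:k\in X\setminus\{j\}\}$. Because $(v_n)$ is a Schauder basis, $v_j\notin\mathcal{E}'$, so $\mathcal{E}\ominus\mathcal{E}'$ is nonzero. Choose a nonzero $v$ in it and expand $v=\sum_{k\in X}a_kv_k$ in the $(v_n)$ basis; then $a_j\ne 0$, as otherwise $v$ would lie in $\mathcal{E}'$.

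Let $v_j^*$ denote the biorthogonal functional to $v_j$, and consider the bounded rank-one operator $A$ defined by $Aw=\langle w,v_j^*\rangle v$; formally $A=\sum_{k\in X}a_kE_{kj}$. Its finite partial sums $\sum_{k\in X_0}a_kE_{kj}$ lie in $\mathcal{A}$ for every finite $X_0\subseteq X$ and converge to $A$ in the strong operator topology, since $\sum_{k\in X_0}a_kv_k\to v$ in norm as $X_0\nearrow X$; hence $A\in\mathcal{A}$ by weak* closedness. Direct computation gives $Av_l=0$ for all $l\in X\setminus\{j\}$---so $A$ vanishes on $\mathcal{E}'$ by boundedness---while $Av_j=v$. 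Therefore the compression $PAP|_\mathcal{E}$ has range $\mathbb{C}v$ and kernel equal to $\mathcal{E}'=\mathcal{E}\ominus\mathbb{C}v$, so after rescaling $A$ by an appropriate scalar, the compression is precisely the orthogonal projection onto $\mathbb{C}v$ inside $\mathcal{E}$. This is a nonscalar selfadjoint element of the subquotient $\overline{P\mathcal{A}P}^{wk*}$, so by Proposition \ref{sainf} the subquotient is not antisymmetric, and $\mathcal{A}$ is not hereditarily antisymmetric.

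The main obstacles are the convergence issues in infinite dimensions: ensuring that the infinite sum $A=\sum_{k\in X}a_kE_{kj}$ actually lies inside the weak* closed algebra $\mathcal{A}$ (handled by the SOT convergence of finite truncations, which in turn uses the norm convergence of the Schauder expansion $\sum a_kv_k\to v$), and checking that after rescaling, the compression $PAP|_\mathcal{E}$ really is the orthogonal projection onto $\mathbb{C}v$. The latter depends on the orthogonal decomposition $\mathcal{E}=\mathcal{E}'\oplus\mathbb{C}v$ together with the fact that $A$ acts on $\mathcal{E}$ as a rank-one operator whose range and kernel are orthogonal to each other by the construction of $v$.
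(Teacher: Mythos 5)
Your proof is correct and follows essentially the same route as the paper's: extract a nondiagonal matrix unit $E_{ij}\in\mathcal{A}$ witnessing the proper containment, pass to the invariant subspace spanned by $\{v_k: k\preceq j\}$, choose $v$ orthogonal to the span of the remaining basis vectors, and realize a nonzero multiple of the rank-one orthogonal projection onto $\mathbb{C}v$ as a weak* limit of the partial sums of $\sum a_k E_{kj}$. The only point worth making explicit (which the paper does) is that passing from SOT convergence of the partial sums to membership in the weak* closed algebra uses their uniform boundedness, which is where the Riesz (rather than merely Schauder) hypothesis enters.
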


\begin{proof}
Let $\mathcal{A}$ be a dual operator algebra which properly contains
$\mathcal{P}'$. Then there exists $k \in \mathbb{N}$ such that $v_k$ is not
an eigenvector for some operator in $\mathcal{A}$. Let $\{P_n\}$ be the
projections from Proposition \ref{schauder} and let $X$ be the set of
$n \in \mathbb{N}$ such that $P_n(Av_k) \neq 0$, for some
$A \in \mathcal{A}$. Observe that for any such $n$ we have $0 \neq P_nAP_k
\in \mathcal{A}$, and hence $E_{nk} \in \mathcal{A}$, where as before $E_{ij}$
is the operator which takes $v_j$ to $v_i$ and annihilates all other $v_{j'}$.

Now $\mathcal{E} = {\rm span}\{v_n: n \in X\}$ is an invariant subspace
for $\mathcal{A}$ which contains $v_k$ and at least one other $v_n$.
Working in $\mathcal{E}$, find a nonzero vector $v$ which is orthogonal to
${\rm span}\{v_n: n \in X \setminus \{k\}\}$ and write
$v = \sum_{n \in X} a_nv_n$. Then consider the operator
$A = \sum_{n \in X} a_nE_{nk}$; considered as an operator in $B(\mathcal{E})$,
this is a nonzero scalar multiple of the orthogonal projection onto
${\rm span}\{v\}$, and it belongs to $\mathcal{A}$ because each $E_{nk}$
belongs to $\mathcal{A}$ and the partial sums are uniformly bounded.
This shows that $\mathcal{A}$ is not hereditarily antisymmetric.
\end{proof}

\section{Infinite dimensional structure analysis}

The transitive algebra problem asks whether any dual operator algebra that
is properly contained in $B(\mathcal{H})$ must have a nontrivial invariant
subspace. Without knowing this to be the case,
there is little we can say about the
structure of such algebras. However, assuming the problem has a positive
answer, we easily get an infinite dimensional analog of Theorem \ref{utthm}.
As in the finite dimensional case, say that a subquotient of a dual operator
algebra corresponding to a semi-invariant subspace $\mathcal{E}$ is {\it full}
if it equals $B(\mathcal{E})$. We also need an infinite dimensional version
of upper triangularity.

\begin{defi}
A {\it nest} in a Hilbert space $\mathcal{H}$ is a chain of
closed subspaces, i.e., a family of closed subspaces
which is totally ordered by inclusion. It is {\it maximal} if it is not
properly contained in any other nest. An algebra $\mathcal{A} \subseteq
B(\mathcal{H})$ is {\it upper triangular} for a nest if each subspace in
the nest is invariant for $\mathcal{A}$.
\end{defi}

Note that in finite dimensions a maximal nest simply looks like a nested
sequence of subspaces, one of each possible dimension, and being upper
triangular with respect to a maximal nest is the same as being upper
triangular with respect to some orthonormal basis. In infinite dimensions,
a nest is maximal if and only if if contains $\{0\}$ and $\mathcal{H}$,
it is complete (closed under arbitrary joins and meets), and whenever
$\mathcal{E}_1$ and $\mathcal{E}_2$ are distinct subspaces in the nest, with
$\mathcal{E}_2 \subset \mathcal{E}_1$ and no other subspace in the nest
intermediate between them, then $\mathcal{E}_1$ has codimension $1$ in
$\mathcal{E}_2$.

\begin{theo}\label{istructure}
Let $\mathcal{A} \subseteq B(\mathcal{H})$ be a dual operator algebra with
no full subquotients of dimension greater than $1$. If the transitive algebra
problem has a positive solution, then there is a maximal nest in $\mathcal{H}$
with respect to which $\mathcal{A}$ is upper triangular.
\end{theo}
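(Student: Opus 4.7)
The plan is to build the nest by transfinite exhaustion (Zorn's lemma), and then show that its maximality as a chain of invariant subspaces forces it to be maximal as a chain of closed subspaces, using the hypothesis on subquotients together with the assumed positive solution to the transitive algebra problem.

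First, let $\mathcal{N}$ be a maximal chain of closed subspaces of $\mathcal{H}$, each invariant for $\mathcal{A}$, ordered by inclusion. The existence of $\mathcal{N}$ is immediate from Zorn's lemma applied to the collection of such chains (a union of a totally ordered family of chains is again a chain of invariant subspaces). Tautologically $\mathcal{A}$ is upper triangular for $\mathcal{N}$, so I only need to verify that $\mathcal{N}$ is a maximal nest in $\mathcal{H}$. Maximality amounts to four conditions: (a) $\{0\}, \mathcal{H} \in \mathcal{N}$; (b) $\mathcal{N}$ is closed under intersections; (c) $\mathcal{N}$ is closed under closed linear spans; and (d) whenever $\mathcal{E}_2 \subsetneq \mathcal{E}_1$ are consecutive elements of $\mathcal{N}$, the gap $\mathcal{E}_1 \ominus \mathcal{E}_2$ is one-dimensional. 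Conditions (a), (b), (c) are automatic, since $\{0\}$ and $\mathcal{H}$ are invariant, and both arbitrary intersections and closed spans of invariant subspaces are invariant and closed; so if any of these subspaces were missing we could add it to $\mathcal{N}$ and contradict maximality.

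The substantive part is condition (d). Suppose, aiming for a contradiction, that $\mathcal{E}_2 \subsetneq \mathcal{E}_1$ are consecutive in $\mathcal{N}$ but $\mathcal{E} := \mathcal{E}_1 \ominus \mathcal{E}_2$ has dimension at least $2$. Let $P$ be the orthogonal projection onto $\mathcal{E}$. Since $\mathcal{E}$ is semi-invariant, $\overline{P\mathcal{A}P}^{wk*} \subseteq B(\mathcal{E})$ is a subquotient of $\mathcal{A}$, and by hypothesis it is not full, so it is properly contained in $B(\mathcal{E})$. Invoking the assumed positive solution to the transitive algebra problem, there is a nontrivial closed invariant subspace $\mathcal{F}$ for $\overline{P\mathcal{A}P}^{wk*}$ inside $\mathcal{E}$.

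I claim $\mathcal{E}_2 \oplus \mathcal{F}$ is invariant for $\mathcal{A}$. Indeed, for $v \in \mathcal{E}_2$, $w \in \mathcal{F}$, and $A \in \mathcal{A}$, invariance of $\mathcal{E}_2$ gives $Av \in \mathcal{E}_2$, while invariance of $\mathcal{E}_1$ gives $Aw \in \mathcal{E}_1 = \mathcal{E}_2 \oplus \mathcal{E}$. The $\mathcal{E}$-component of $Aw$ equals $PAw = PAPw$, which lies in $\mathcal{F}$ because $\mathcal{F}$ is invariant for $P\mathcal{A}P$; hence $Aw \in \mathcal{E}_2 \oplus \mathcal{F}$, as claimed. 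Since $\mathcal{F}$ is a nontrivial proper subspace of $\mathcal{E}$, the space $\mathcal{E}_2 \oplus \mathcal{F}$ lies strictly between $\mathcal{E}_2$ and $\mathcal{E}_1$, contradicting the consecutiveness of $\mathcal{E}_2$ and $\mathcal{E}_1$ in $\mathcal{N}$. This establishes (d) and completes the proof.

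The main obstacle, conceptually, is step (d): it is where the hypothesis and the transitive algebra problem actually do work. Everything else is formal nest-theoretic manipulation, but closing a gap of dimension $\geq 2$ requires genuinely producing a new invariant subspace, and the transitive algebra problem is precisely the tool that turns the non-fullness of a subquotient into such a subspace.
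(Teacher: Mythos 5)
Your proof is correct and follows essentially the same route as the paper's: Zorn's lemma produces a maximal chain of invariant subspaces, and the transitive algebra problem is used to split any gap of dimension greater than $1$ via a nontrivial invariant subspace of the (non-full) subquotient, contradicting maximality. The only difference is that you spell out the verification that $\mathcal{E}_2 \oplus \mathcal{F}$ is invariant, which the paper leaves implicit.
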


\begin{proof}
Use Zorn's lemma to find a maximal chain of invariant subspaces. We must
show that it is a maximal nest. It clearly contains $\{0\}$ and $\mathcal{H}$
and is complete. Thus let $\mathcal{E}_1$ and $\mathcal{E}_2$
be distinct subspaces in the chain and suppose $\mathcal{E}_2 \subset
\mathcal{E}_1$ and there is no strictly intermediate subspace between them in
the chain. Then $\mathcal{E}_1\ominus\mathcal{E}_2$ is semi-invariant, and if
its dimension were greater than $1$ then by hypothesis the corresponding
subquotient of $\mathcal{A}$ could not be full. A positive solution to the
transitive algebra problem would then imply the existence of a proper
closed subspace $\mathcal{F}$ of $\mathcal{E}_1\ominus \mathcal{E}_2$ which
is invariant for the compression of $\mathcal{A}$. But then $\mathcal{E}_2
\oplus \mathcal{F}$ would be an invariant subspace strictly intermediate
between
$\mathcal{E}_1$ and $\mathcal{E}_2$, contradicting maximality of the chain.
We conclude that there is a maximal nest which consists only of invariant
subspaces for $\mathcal{A}$, i.e., with respect to which $\mathcal{A}$ is
upper triangular.
\end{proof}

Like the implication (iii) $\Rightarrow$ (i) of Theorem \ref{utthm}, this
theorem can be inferred from standard results; see Lemma 7.1.11 of
\cite{RR}. But its explicit statement is perhaps new.

\begin{coro}
Let $\mathcal{A} \subseteq B(\mathcal{H})$ be a hereditarily antisymmetric
dual operator algebra. If the transitive algebra problem has a positive
solution, then there is a maximal nest in $\mathcal{H}$ with respect to
which $\mathcal{A}$ is upper triangular.
\end{coro}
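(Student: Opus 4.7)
The plan is to obtain this corollary as an immediate application of Theorem \ref{istructure}. That theorem's hypothesis is that $\mathcal{A}$ has no full subquotients of dimension greater than $1$, so the only thing to verify is that hereditary antisymmetry implies this condition.

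The key observation is a trivial one: if $\mathcal{E}$ is a closed subspace of $\mathcal{H}$ with $\dim\mathcal{E} \geq 2$, then $B(\mathcal{E})$ is manifestly not antisymmetric, since it contains, for instance, any rank-one orthogonal projection onto a proper one-dimensional subspace of $\mathcal{E}$ (which is a non-scalar self-adjoint operator). So ``full'' and ``antisymmetric'' are incompatible in dimension greater than $1$.

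Thus, suppose for contradiction that $\mathcal{A}$ had a full subquotient of dimension at least $2$. Then there would be a semi-invariant subspace $\mathcal{E}$ for $\mathcal{A}$ with $\dim\mathcal{E} \geq 2$ and $\overline{P\mathcal{A}P}^{wk*} = B(\mathcal{E})$, where $P$ is the orthogonal projection onto $\mathcal{E}$. By the preceding observation this subquotient would contain non-scalar self-adjoint operators, contradicting antisymmetry of every subquotient of $\mathcal{A}$ (Definition \ref{ihadef}). Hence $\mathcal{A}$ has no full subquotients of dimension greater than $1$, and Theorem \ref{istructure} applies to produce the desired maximal nest.

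There is no real obstacle to anticipate: the hard analytic work --- in particular the invocation of a positive solution to the transitive algebra problem to split off one-dimensional pieces from semi-invariant subspaces --- is already packaged inside Theorem \ref{istructure}. The corollary is a one-line consequence once the compatibility between hereditary antisymmetry and the ``no full subquotients'' hypothesis is noted.
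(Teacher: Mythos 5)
Your proposal is correct and matches the paper's (implicit) reasoning: the paper states this corollary without proof as an immediate consequence of Theorem \ref{istructure}, the point being exactly your observation that a full subquotient $\overline{P\mathcal{A}P}^{wk*} = B(\mathcal{E})$ with $\dim\mathcal{E} \geq 2$ would contain non-scalar self-adjoint operators and hence violate hereditary antisymmetry. Nothing further is needed.
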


The same technique can be applied to a single bounded operator. In this case
the hypothesis we need is a positive solution to the invariant subspace
problem. The following theorem is proven in the same way as Theorem
\ref{istructure} and hence also follows easily from ideas in \cite{RR}.
Experts would surely consider it to be ``known'',
but I have not seen it explicitly written anywhere.

\begin{theo}\label{ispthm}
Let $A$ be a bounded operator on an infinite dimensional Hilbert space.
Assume the invariant subspace problem for Hilbert space operators has a
positive solution. Then there is a maximal nest in $\mathcal{H}$ with
respect to which $A$ is upper triangular.
\end{theo}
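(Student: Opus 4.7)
The plan is to mimic the argument of Theorem \ref{istructure} almost verbatim, replacing the transitive algebra hypothesis by the invariant subspace hypothesis, and replacing the dual operator algebra by the singleton $\{A\}$ (equivalently, by the collection of closed $A$-invariant subspaces). Nothing in the earlier proof genuinely used that $\mathcal{A}$ was an algebra rather than a single operator; what it used was that the compression to a gap had a nontrivial invariant subspace.

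First I would apply Zorn's lemma to the poset of closed $A$-invariant subspaces of $\mathcal{H}$, ordered by inclusion, to extract a maximal chain $\mathcal{N}$. Both $\{0\}$ and $\mathcal{H}$ are $A$-invariant and hence lie in $\mathcal{N}$. The chain is automatically complete: for any subchain $\mathcal{N}_0 \subseteq \mathcal{N}$, both the closure of $\bigcup \mathcal{N}_0$ and $\bigcap \mathcal{N}_0$ are closed and $A$-invariant, and so by maximality of $\mathcal{N}$ they already lie in $\mathcal{N}$.

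The critical step is to show that for any two subspaces $\mathcal{E}_2 \subset \mathcal{E}_1$ of $\mathcal{N}$ with no element of $\mathcal{N}$ strictly between them, the orthogonal difference $\mathcal{E}_1 \ominus \mathcal{E}_2$ is one-dimensional. Suppose not, aiming for a contradiction. Let $P$ be the orthogonal projection onto $\mathcal{E}_1 \ominus \mathcal{E}_2$; since this subspace is semi-invariant, the compression $PAP$ is a bounded operator on it, and we seek a nontrivial closed invariant subspace $\mathcal{F}$ for $PAP$. If $\dim(\mathcal{E}_1 \ominus \mathcal{E}_2)$ is finite and greater than $1$, then $PAP$ has an eigenvector and we are done; if it is infinite, then the assumed positive solution to the invariant subspace problem provides such an $\mathcal{F}$. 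Splitting into these two subcases is where the hypothesis is used most delicately, but neither subcase is hard.

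Finally, I would verify that $\mathcal{E}_2 \oplus \mathcal{F}$ is $A$-invariant, contradicting the choice of consecutive $\mathcal{E}_2,\mathcal{E}_1$. For $v_2 \in \mathcal{E}_2$ and $f \in \mathcal{F}$, we have $Av_2 \in \mathcal{E}_2$, while $Af \in \mathcal{E}_1$ decomposes as $w_2 + w$ with $w_2 \in \mathcal{E}_2$ and $w = PAf = PAPf \in \mathcal{F}$; hence $A(v_2 + f) \in \mathcal{E}_2 \oplus \mathcal{F}$. This produces a closed $A$-invariant subspace strictly between $\mathcal{E}_2$ and $\mathcal{E}_1$, contradicting maximality of $\mathcal{N}$. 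Thus every gap of $\mathcal{N}$ has dimension $1$, so $\mathcal{N}$ is a maximal nest, and $A$ is upper triangular with respect to $\mathcal{N}$ by construction. The only real obstacle I anticipate is the bookkeeping to handle finite-dimensional gaps (where the invariant subspace hypothesis is not needed) separately from infinite-dimensional ones; the rest is a direct single-operator translation of the proof of Theorem \ref{istructure}.
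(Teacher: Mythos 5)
Your proposal is correct and follows essentially the same route as the paper, which proves Theorem \ref{ispthm} by running the Zorn's lemma argument of Theorem \ref{istructure} with the invariant subspace hypothesis in place of the transitive algebra hypothesis. Your explicit handling of finite-dimensional gaps via an eigenvector of the compression is a detail the paper leaves implicit, but it is the right observation and does not change the argument.
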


It seems worthwhile to state this theorem explicitly, both to make the result
available to non-experts, and because it yields a reduction in the negative
direction: in order to answer the invariant subspace problem negatively, we
do not need to find a bounded operator for which every nonzero vector is
cyclic, we only need to find a bounded operator which cannot be made upper
triangular.

In contrast to the finite dimensional case (Theorem \ref{utthm}), the
converse direction in Theorem \ref{istructure} is false.

\begin{exam}\label{iutex}
Working on $l^2(\mathbb{N})$, let $A_1 = {\rm diag}(1,0,1,0,\ldots)$,
$A_2 = {\rm diag}(0,1,0,1,\ldots)$, $A_3 = U\cdot {\rm diag}(2,0,2,0,\ldots)$,
and $A_4 = U\cdot{\rm diag}(0,1,0,1,\ldots)$, where $U$ is the backward
shift operator. Consider the vectors
$$v = \left[\begin{matrix}
1\\
0\\
\frac{1}{2}\\
0\\
\frac{1}{4}\\
0\\
\vdots
\end{matrix}\right]
\qquad w = \left[\begin{matrix}
0\\
1\\
0\\
\frac{1}{2}\\
0\\
\frac{1}{4}\\
\vdots
\end{matrix}\right].$$
Then $A_1v = v$, $A_1w = 0$, $A_2v = 0$, $A_2w = w$, $A_3v = w$, $A_3w = 0$,
$A_4v = 0$, $A_4w = v$. Thus the algebra generated by the $A_i$ has
$\mathcal{E} = {\rm span}\{v,w\}$ as an invariant subspace, and its
compression to $\mathcal{E}$ equals $B(\mathcal{E}) \cong M_2$. So it has
a full subquotient of dimension greater than $1$, yet it is evidently
upper triangular for the standard orthonormal basis of $l^2(\mathbb{N})$.
\end{exam}

\bigskip

\end{document}